\definecolor{citation}{rgb}{0.2,0.58,0.2} 
\definecolor{formula}{rgb}{0.1,0.2,0.6}
\definecolor{url}{rgb}{0.3,0,0.5} 
\def\vs{\vspace{0.5mm}}
\def\dd{d_{\textrm{o}}}
\def\dxy{{\rm d}\xi{\rm d}\eta}
\def\eps{\varepsilon}
\def\h{\mathds{H}}
\def\dxi{{\rm d}\xi}
\def\dxieta{\,{\rm d}\xi{\rm d}\eta}
\def\p{{\phi}}
\DeclareRobustCommand*{\bfseries}{%
	\not@math@alphabet\bfseries\mathbf
	\fontseries\bfdefault\selectfont
	\boldmath
}
\DeclareMathOperator*{\osc}{osc}
\DeclareMathOperator*{\essinf}{ess \, \inf}
\DeclareMathOperator*{\esssup}{ess \, \sup}
\newlength{\defbaselineskip}
\newcommand{\setlinespacing}[1]
{\setlength{\baselineskip}{#1 \defbaselineskip}}
\newtheorem{thm}{Theorem}[section]
\newtheorem{prop}[thm]{Proposition}
\newtheorem{corol}[thm]{Corollary}
\newtheorem{lemma}[thm]{Lemma}
\newtheorem{defn}[thm]{Definition}
\newtheorem{rem}[thm]{Remark}
\numberwithin{equation}{section}
\newcommand{\rr}{\varrho}
\newcommand{\snr}[1]{\lvert #1\rvert}
\newcommand{\norma}[1]{\left\| #1\right\|}
\newcommand\ap{``}
\def \er {{\mathds{R}}}
\def \ph{{\phi}}
\def \Lc{{\mathcal{L}}}
\def \He{{\mathds{H}^n}}
\def \Tail{\textup{Tail}}
\def \Tl{\textup{Tail}}
\def\Xint#1{\mathchoice
	{\XXint\displaystyle\textstyle{#1}}%
	{\XXint\textstyle\scriptstyle{#1}}%
	{\XXint\scriptstyle\scriptscriptstyle{#1}}%
	{\XXint\scriptscriptstyle\scriptscriptstyle{#1}} %
	\!\int}
\def\XXint#1#2#3{{\setbox0=\hbox{$#1{#2#3}{\int}$}
		\vcenter{\hbox{$#2#3$}}\kern-.5\wd0}}
\def\dashint{\Xint-}
\title[Nonlinear and nonlocal equations in~$\He$]{Nonlinear fractional equations \\ in the Heisenberg group}
\author[G.~Palatucci]{Giampiero Palatucci}  \address{Giampiero Palatucci\\Dipartimento di Scienze Matematiche, Fisiche e Informatiche, Universit\`a di Parma\\ Parco Area delle Scienze 53/a, Campus, 43124 Parma, Italy} \email{\url{giampiero.palatucci@unipr.it}}
\author[M.~Piccinini]{Mirco Piccinini}  \address{Mirco Piccinini\\Dipartimento di Scienze Matematiche, Fisiche e Informatiche, Universit\`a di Parma\\ Parco Area delle Scienze 53/a, Campus, 43124 Parma, Italy} \email{\url{mirco.piccinini@unipr.it}}
\begin{document}
	\vskip -8in
	\begin{center}
		\rule{8.5cm}{0.5pt}\\[-0.1cm] 
		{\small To appear in 
			{\it Bruno Pini Math. Anal. Semin.},
			2024.
		}
		\\[-0.25cm] \rule{8.5cm}{0.5pt}
	\end{center}
	\vspace {2.2cm}

	    %
	    %
	    
		\begin{abstract}
		We deal with a wide class of nonlinear nonlocal equations led by integro-differential operators of order~$(s,p)$, with summability exponent~$p \in (1,\infty)$ and differentiability exponent~$s\in (0,1)$, whose prototype is the fractional subLaplacian in the Heisenberg group. We present very recent boundedness and regularity estimates (up to the boundary) for the involved weak solutions, and we introduce  the nonlocal counterpart of the Perron Method in the Heisenberg group, by recalling some results on the fractional obstacle problem. Throughout the paper we also list various related open problems.
	\end{abstract}

	\subjclass[2010]{Primary 35H20, 35R05;
		Secondary 35B05, 35B30, 35D10, 35B45, 47G20\vspace{1mm}} 	
	\keywords{nonlocal operators, fractional subLaplacian, De~Giorgi-Nash-Moser theory, Heisenberg group, Caccioppoli estimates, obstacle problems, Perron's method\vspace{1mm}}

   \thanks{{\it Aknowledgements}. The authors are also supported by INdAM Project \ap Fenomeni non locali in problemi locali", CUP\_E55F22000270001.
   	The second author is also supported by the Project \ap Local vs Nonlocal: mixed type operators and nonuniform ellipticity", CUP\_D91B21005370003.}
	
	\maketitle

 \setcounter{equation}{0}\setcounter{thm}{0}
	\setcounter{tocdepth}{2} 
{\footnotesize 	\setlinespacing{0.79}
	\vspace{2mm}
	\centering
	
	\tableofcontents 
}

\setlinespacing{1.07}

	%
\section{Introduction}
	In this notes we deal with a  wide class of nonlinear and nonlocal integro-differential operators, defined on  suitable Sobolev spaces, whose explicit expression is given by
	\begin{equation}\label{main_op}
		\Lc u (\xi) \equiv \Lc_{s,p} u (\xi) := P.~\!V. \int_{\He} \frac{|u(\xi)-u(\eta)|^{p-2}(u(\xi)-u(\eta))}{\dd(\eta^{-1}\circ\xi)^{Q+sp}} \, {\rm d}\eta, \qquad \xi \in \He,
	\end{equation}
    where~$p \in (1,\infty)$,~$s \in (0,1)$,~$Q\equiv 2n+2$ is the homogeneous dimension and~$\dd$ is a homogeneous norm on~$\He$; see Section~\ref{sec_prel} below for the precise definitions and further details.
	
    \vspace{2mm}
    We firstly focus on the linear case when $p=2$. Integro-differential operators as in the form in~\eqref{main_op} do arise as a generalization of the classical {\it conformally invariant} fractional subLaplacian~$(-\Delta_{\He})^{(s)}$ on~$\He$ which was first introduced in~\cite{BFM13} via the spectral formula
     $$
     (-\Delta_{\He})^{s} :=2^s \snr{T}^s \frac{{\bf \Gamma}\big(-\frac{1}{2}\Delta_{\He}\snr{T}^{-1} +\frac{1+s}{2}\big)}{{\bf \Gamma}\big(-\frac{1}{2}\Delta_{\He}\snr{T}^{-1} +\frac{1-s}{2}\big)}, \qquad s \in (0,1),
     $$
     with~${\bf \Gamma}(x):=\int_0^\infty t^{x-1}e^{-t}\, {\rm d}t$ being the Euler Gamma function,~$\Delta_{\He}$ being the classical Kohn-Spencer subLaplacian in~$\He$, and~$T=\partial_t$ being the vertical vector field.
     
     As proven in~\cite[Proposition~4.1]{RT16} the following representation formula holds true
     \begin{equation}\label{frac_sublap}
	 (-\Delta_{\He})^{s} u(\xi)\, :=\, C(n,s) \,P.~\!V. \int_{\He}\frac{u(\xi)-u(\eta)}{|\eta^{-1}\circ \xi|_{\He}^{Q+2s}}\,{\rm d}\eta, \qquad \xi \in \He,
   \end{equation}
  where~$C(n,s)$ is a positive constant which depends only on~$n$ and~$s$.

   Notice that the operator defined in Formula~\eqref{frac_sublap} above does not coincide, for any~$s \in (0,1)$, with the classical power of the subLaplacian~$-\Delta_{\He}^s$ defined via heat kernel
   \begin{equation}\label{not_conf_inv}
   -\Delta_{\He}^s u (\xi) := -\frac{s}{{\bf \Gamma}(1-s)}\int_0^\infty \frac{1}{t^{1+s}}\big(P_t u(\xi)-u(\xi)\big) \,{\rm d}t, \qquad u \in C^\infty_0(\He),
    \end{equation}
   with~$P_tu(\xi):= e^{-\Delta_{\He} t}u(\xi)$ being the heat semigroup built by Folland in~\cite{Fol73}.

    \vspace{2mm}   
    In the last decades, many relevant results have been achieved in the study of fractional operators in the Heisenberg group both in the form~\eqref{not_conf_inv} and~\eqref{frac_sublap}.  As an example, we just recall the paper~\cite{RT16}, where several Hardy inequalities were established for the conformally invariant fractional subLaplacian~\eqref{frac_sublap}. We also refer the reader  to~\cite{CCR15} for related Hardy and uncertainty inequalities on general stratified Lie groups involving fractional powers of the subLaplacian, and to~\cite{AM18}, where Sobolev and Morrey-type embeddings are derived for fractional order Sobolev spaces. 
    Moreover, very important results have been obtained based on the construction of fractional operators via a Caffarelli-Silvestre-type extension  as seen in the Euclidean framework in~\cite{CS07}. In particular, we mention the recent series of papers~\cite{GT21,GT21b} where the authors used  ad-hoc Dirichlet-to-Neumann map to built explicit fundamental solutions to~\eqref{frac_sublap} and~\eqref{not_conf_inv} and to prove some intertwining formulas for~$(-\Delta_{\He})^{s}$ and~$-\Delta_{\He}^s$. Also, a Liouville-type theorem was proven in~\cite{CT16}; Harnack and H\"older results in Carnot groups in~\cite{FF15}; whereas the connection with the fractional perimeters of sets in Carnot group can be found in~\cite{FMPPS18}.
    We also refer to~\cite{OT23} for mixed local/nonlocal operators on the Heisenberg group where the nonlocal part is given by~ the standard subLaplacian operator~$(-\Delta_{\He})^{s}$ defined in~\eqref{frac_sublap}.
    
    \vspace{4mm} 
	In the present notes we consider the more general situation when a $p$-growth exponent is involved. Nonlinear operators on stratified Lie group both in their nonlocal and local formulation are connected with several concrete models arising from many different contexts where the analysis in subRiemannian geometry revealed to be decisive; see for example~\cite{BGM18,CMS10,OS22,PS99,Wey50} and the references therein.
	Here, we present some very recent results regarding the De Giorgi-Nash-Moser theory for the weak solutions to general equations led by integro-differential operators~$\Lc$ defined in~\eqref{main_op}, and we conclude by recalling a Perron-type resolutivity theorem for the related Dirichlet problem. All the forthcoming theorems are proven  in~\cite{MPPP23,PP22,Pic22}, and they are the natural extension in the Heisenberg setting of the Euclidean counterpart proven for a class of very general $p$-fractional Laplacian-type operators in~\cite{DKP14,DKP16,KKP16,KKP17,LL17}. For further results on equations where the leading operator is such as the one in~\eqref{main_op}, we also refer to~\cite{HS23,KS18,KS20,WD20}.
	
	\vspace{2mm}
	The main difficulty in dealing with the operator~$\Lc$ in~\eqref{main_op} lies in its own definition. Indeed, we have to deal both with its nonlocal and nonlinear structure and with the underlying nonEuclidean geometry of the Heisenberg group. Moreover, since the integrability exponent~$p$ can differ from 2,  most of the classical tools successfully extended in the Heisenberg framework in the linear case such as the Dirichlet-to-Neumann map in~\cite{FGMT15,GT21,GT21b}, the approach via (non-commutative) Fourier representation, and many others, are not trivially applicable. As natural when dealing with nonlocal operators we have to consider a tail-type contribution which gives a precise control on the long-range interactions that naturally occur.
	
	\begin{defn}[{\bf Nonlocal tail}; Definition~(1.5) in~{\rm\cite{MPPP23,PP22}}]
	 For any~$p \in (1,\infty)$ and~$s \in (0,1)$ we call {\rm nonlocal tail} of a function~$u$ in a ball~$B_R(\xi_0) \subset \He$ of radius~$R>0$ and centred in~$\xi_0\in \He$ the quantity defined as follows
	\begin{equation}\label{tail}
		\Tail(u;\xi_0,R):= \left(R^{sp} \int_{\He \smallsetminus B_R(\xi_0)}|u(\xi)|^{p-1}|\xi_0^{-1} \circ \xi|_{\He}^{-Q -sp}\,{\rm d}\xi\right)^{\frac{1}{p-1}}\!.
	\end{equation}
	\end{defn}
    In the Euclidean framework the nonlocal tail has been firstly introduced in the study of nonlocal operators in~\cite{DKP14,DKP16} and then subsequently used to derive fine properties of solutions to nonlocal equations; see e.~\!g.~\cite{BLS18,KKP16,KKP17,KMS15} and the references therein. We also refer the interested reader to the survey article~\cite{Pal18}.
    Several extension of the nonlocal tail are available in the parabolic setting~\cite{Strom19,DZZ21,Liao22,APT22} as well as in the kinetic framework~\cite{AP23}. In the considered Heisenberg framework, the nonlocal tail has been firstly introduced in the papers~\cite{PP22,MPPP23}, and -- despite its young age -- it has been already proven to be a fundamental tool in order to achieve several results, as for instance the ones regarding the regularity for the obstacle problem and the Perron method (\cite{Pic22}), and those involving the fractional critical (Folland-Stein)-Sobolev embedding (see~\cite{GLV23}, and also~\cite{PPT23,PP23,PPT23b}).

    \vspace{2mm}
    In clear accordance with the definition of the tail function in~\eqref{tail}, we can consider as done in~\cite{KKP16,KKP17} the associated {\it tail space}~$L^{p-1}_{sp}$ given by
    $$
    L^{p-1}_{sp}(\He):=\left\{ v\in L^{p-1}_{\rm loc}(\He): \int_{\He} \frac{|v(\xi)|^{p-1}}{(1+|\xi|_{\He})^{Q+sp}}\, {\rm d}\xi < \infty\right\}.
    $$ 
    The previous definitions are very natural when dealing with operators of the type~\eqref{main_op}; we refer the reader to Section~\ref{sec_prel} below for further details.
    
    \vspace{2mm}
    \subsection{Outline of the paper}
    In Section~\ref{sec_prel} we recall some useful facts about the Heisenberg group and its related fractional Sobolev spaces. In Section~\ref{sec_hold} we present the De Giorgi-Nash-Moser Theory for~$\Lc$. Section~\ref{sec_obs} is devoted to the related obstacle problem, and to extend up to the boundary the boundedness and H\"older results presented in Section~\ref{sec_hold}. 
    In Section~\ref{sec_perron} we solve the Dirichlet problem related to~$\Lc$ in the sense of Perron in general open bounded set. We conclude with Section~\ref{sec_further} giving some indications about further developments of the results presented in the present notes.

    \vspace{2mm}
	%
	%
	\section{Preliminaries}\label{sec_prel}
	We fix some notation which will be used throughout the rest of the paper. Firstly, we will denote by~$c$ a general positive constant greater than~$1$ which will not necessarily be the same at different occurrences and which can also change from line to line. 

	\subsection{The Heisenberg group}
	We start by introducing some definitions and results about the Heisenberg group. For further details we refer to the book~\cite{BLU07} by Bonfiglioli, Lanconelli and Uguzzoni.
	
	\vspace{2mm}
	The Heisenberg group~$\He$ is the Lie group which has~$\er^n_x\times \er^n_y \times \er_t$ as background manifold and whose group law~$\circ$ and dilations~$ \delta_\lambda$ are given by
     $$
		\xi \circ \xi' := \big(x+x',\, y+y',\, t+t'+2\langle y,x'\rangle-2\langle x,y'\rangle \big), \qquad \forall \xi=(x,y,t), \xi'= (x',y',t') \in \He
	$$
     and 
     $$
         \delta_\lambda(\xi):=\big(\lambda x, \, \lambda y, \,\lambda^2 t\big),
 	 $$
	 respectively.
 	 It can be checked that the identity element is the origin~$0$,  the inverse~$\xi^{-1} = - \xi$. The Jacobian of~$\delta_\lambda$ is equal to~$\lambda^{2n+2}$, and the number~$Q \equiv 2n+2$ is usually called {\it homogeneous dimension} of~$\He$.
	
	The Jacobian basis of the Heisenberg Lie algebra~$\mathfrak{h}^n$ of~$\He$ is given by the following vector fields
	$$
	X_j := \partial_{x_j} +2y_j \partial_t, \quad
	X_{n+j}:= \partial_{y_j}-2x_j \partial_t, \quad 1 \leq j \leq n, \quad
	T := \partial_t.
	$$
     Moreover,  a simple computation shows that
	$$
	[X_j,X_{n+j}]:= X_j X_{n+j}-X_{n+j}X_j = -4\partial_t, \quad \text{for every}~1 \leq j \leq n,
	$$
	so that
	$$
    \textup{rank}\Big(\textup{Lie}\{X_1,\dots,X_{2n}\}\Big)\ = \ 2n+1.
	$$
	The group $\He$ is thus a {\it Carnot group} with the following stratification~$\mathfrak{h}^n$ of the Lie algebra,
	$$
	\mathfrak{h}^n = \textup{span}\{X_1,\dots,X_{2n}\}\oplus \textup{span}\{T\}.
	$$
    Given a domain~$\Omega \subset \He$, for any~$u\in C^1(\Omega;\,\er)$ we define the {\it horizontal gradient}~$\nabla_{\He} u$  of~$u$ by
	$$
	\nabla_{\He} u (\xi):= \Big(X_1u(\xi),\dots, X_{2n}u(\xi)\Big).
    $$    
	\begin{defn} 
		A \textup{homogeneous norm} on~$\He$ is a continuous function {\rm (}with respect to the Euclidean topology\,{\rm )}~${d_{\rm o}} : \He \rightarrow [0,+\infty)$ such that:
		\begin{enumerate}[\rm(i)]
			\item{
				${d_{\rm o}}(\delta_\lambda(\xi))=\lambda {d_{\rm o}}(\xi)$, for every~$\lambda>0$ and every~$\xi \in \He$;
			}\vspace{1mm}
			\item{
				${d_{\rm o}}(\xi)=0$ if and only if~$\xi=0$.
			}
		\end{enumerate}
		Moreover, we say that the homogeneous norm~${d_{\rm o}}$ is {\rm symmetric} if~$
		{d_{\rm o}}(\xi^{-1})={d_{\rm o}}(\xi)$, for any~$\xi \in \He$.
	\end{defn}
	Fixed an homogeneous norm~$d_{\rm o}$ on~$\He$, the function~$\Psi$ defined on the set of all pairs of elements of~$\He$ by
		$$
		\Psi(\xi,\eta):={d_{\rm o}} (\eta^{-1}\circ \xi),
		$$
		is a pseudometric on $\He$.
		
		\vspace{2mm}
		Homogenous norms are not in general proper norms on~$\He$. However, for any homogeneous norm~$d_{\rm o}$ on~$\He$ we have that there exists a constant~$\Lambda>0$ such that
		$$
	   \Lambda^{-1}	\snr{\xi}_{\He} \le d_{\rm o}(\xi) \le \Lambda \snr{\xi}_{\He}, \quad \forall \xi \in \He,~\snr{\xi}_{\He}:= \left((\snr{x}^2 +\snr{y}^2)^2+t^2\right)^\frac{1}{4}.
		$$
        The function~$\snr{\cdot}_{\He}$ in the display above is {\it the Kor\'anyi distance} in the Heisenberg group, and it is actually a norm; for the proof we refer to~\cite{Cyg81} and to Example~5.1 in~\cite{BFS17}.
        
       	For any fixed~$\xi_0 \in \He$ and~$R>0$, we denote by~$B_R(\xi_0)$ the ball with center~$\xi_0$ and radius~$R$, given by
     	$$
     	B_R(\xi_0):=\Big\{\xi \in \He : |\xi_0^{-1}\circ \xi|_{\He} < R\Big\}.
	    $$
    \vspace{2mm}
	\subsection{The fractional Sobolev spaces}
    We recall now some definitions and a few related basic results about our fractional functional setting. For further details, we refer the reader to~\cite{AM18,KS18}; see also~\cite{DPV12} 
     for 
    the basics of the fractional Sobolev spaces in the Euclidean setting.
	\vspace{1mm}

	Let~$p \in (1,\infty)$,~$s \in (0,1)$, and let~$u : \He \rightarrow \er$ be a measurable function. 
	The fractional Sobolev spaces~$HW^{s,p}$ on the Heisenberg group is defined by
    $$
		HW^{s,p}(\He):=\Big\{u \in L^p(\He):  \frac{\snr{u(\xi)-u(\eta)}}{\snr{\eta^{-1}\circ \xi}^{s+\frac{Q}{p}}} \in L^p(\He \times \He)\Big\},
    $$
	endowed with the natural fractional norm
    $$
			\|u\|_{HW^{s,p}(\He)}:= \Big(\int_{\He}\snr{u}^p \, {\rm d}\xi +\underbrace{\int_{\He}\int_{\He}\frac{\snr{u(\xi)-u(\eta)}^p}{\snr{\eta^{-1}\circ \xi}^{Q+sp}}\, {\rm d}\xi {\rm d}\eta}_{=:[u]_{HW^{s,p}(\He)}}\Big)^\frac{1}{p}, \qquad u \in HW^{s,p}(\He).
    $$
     In a similar fashion, given a domain~$\Omega \subset \He$, one can define the  fractional Sobolev space~$HW^{s,p}(\Omega)$. By~$HW^{s,p}_0(\Omega)$ we denote the closure of~$C_0^\infty(\Omega)$ in~$HW^{s,p}(\He)$. Conversely, if~$v \in HW^{s,p}(\Omega')$ with~ $\Omega \Subset \Omega'$ and~$v=0$ outside of~$\Omega$ a.~\!e., then~$v$ has a representative in~$HW_0^{s,p}(\Omega)$ as well.
     
	As expected, one can prove that classical embedding theorems still hold in our fractional functional setting.  In particular, we refer to the result in~\cite{KS18}, where the authors prove the following
	\begin{thm}[Theorem~2.5 in~\cite{KS18}]\label{sobolev}
		Let~$p \in (1,\infty)$ and~$s \in (0,1)$ such that~$sp<Q$. For any measurable compactly supported function $u : \He \rightarrow \er$ there exists a positive constant~$c\equiv c(n,p,s)$ such that
		\begin{equation*}
			\|u\|^p_{L^{p^*}(\He)}\, \leq \, c \,[u]^p_{HW^{s,p}(\He)}\,,
		\end{equation*}
		where~$p^* = {Qp}/{(Q-sp)}$ is the critical Sobolev exponent.
	\end{thm}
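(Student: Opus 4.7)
The plan is to adapt the classical proof of the fractional Sobolev embedding (in the spirit of Di~Nezza-Palatucci-Valdinoci in the Euclidean case) to the Heisenberg setting, relying on the Folland-Stein Hardy-Littlewood-Sobolev inequality on stratified Lie groups. The latter asserts that for~$0<\alpha<Q$ and~$1<q_1<Q/\alpha$, the Riesz-type potential
$$
I_\alpha h (\xi) := \int_{\He}\frac{h(\eta)}{|\eta^{-1}\circ\xi|_{\He}^{Q-\alpha}}\, {\rm d}\eta
$$
maps $L^{q_1}(\He)$ into $L^{q_2}(\He)$ with~$1/q_2=1/q_1-\alpha/Q$. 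The idea is then to dominate~$|u|$ pointwise by~$I_s$ applied to a ``fractional gradient'' whose~$L^p$-norm coincides with the Gagliardo seminorm~$[u]_{HW^{s,p}(\He)}$, after which HLS with~$q_1=p$,~$\alpha=s$,~$q_2=p^*$ immediately delivers the claimed embedding.

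More concretely, I would introduce the auxiliary function
$$
g(\xi) := \bigg(\int_{\He}\frac{|u(\xi)-u(\eta)|^p}{|\eta^{-1}\circ\xi|_{\He}^{Q+sp}}\,{\rm d}\eta\bigg)^{\!1/p},
$$
so that~$\|g\|_{L^p(\He)}^p=[u]_{HW^{s,p}(\He)}^p$, and aim at a pointwise bound of the form~$|u(\xi)|\leq c\,(I_sg)(\xi)$ for a.e.~$\xi\in\He$. Such an estimate should be accessible via a dyadic decomposition: at any Lebesgue point of~$u$, write~$u(\xi)$ as a telescoping series of averages
$$
u(\xi)=\sum_{k\in\mathds{Z}}\Big(u_{B_{r_{k+1}}(\xi)}-u_{B_{r_k}(\xi)}\Big),\qquad r_k:=2^{-k},
$$
which converges thanks to Lebesgue's differentiation theorem together with the compact support of~$u$ (killing the tail as~$k\to-\infty$). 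Each increment is controlled by the double average
$$
\big|u_{B_{r_{k+1}}(\xi)}-u_{B_{r_k}(\xi)}\big|\,\leq\, c\,\dashint_{B_{r_k}(\xi)}\dashint_{B_{r_k}(\xi)}|u(\eta)-u(\zeta)|\,{\rm d}\eta\,{\rm d}\zeta;
$$
inserting and extracting a factor~$|\zeta^{-1}\circ\eta|_{\He}^{s+Q/p}$, applying H\"older's inequality in the inner integration, and summing the resulting geometric series in~$k$ should reconstruct~$(I_sg)(\xi)$ up to a universal constant. HLS then yields~$\|u\|_{L^{p^*}(\He)}\leq c\,\|g\|_{L^p(\He)}$, and raising to the~$p$-th power is the claim.

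The main obstacle is precisely this pointwise reconstruction: while its Euclidean analogue is classical, the noncommutativity of~$\He$ forces repeated use of the quasi-triangle inequality~$\dd(\eta^{-1}\circ\xi)\leq\Lambda(\dd(\eta)+\dd(\xi))$, of the left-invariance of Haar measure (equal to Lebesgue measure in exponential coordinates), and of the homogeneity~$|B_R(\xi_0)|\sim R^Q$ to guarantee that the constants arising in each dyadic step do not degenerate and that the comparison between the pseudodistance~$\dd$ and the Kor\'anyi norm~$|\cdot|_{\He}$ can be absorbed into the final constant~$c$. An alternative route, arguably of comparable difficulty, would be to first establish the case~$p=1$ by a fractional coarea formula combined with an isoperimetric inequality for fractional perimeters in~$\He$ (as studied in~\cite{FMPPS18}), and then to boost to general~$p$ through a Maz'ya-type truncation on level sets of~$|u|$.
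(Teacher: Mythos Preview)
The paper does not contain a proof of this statement: Theorem~\ref{sobolev} is quoted verbatim from~\cite{KS18} (Kassymov--Suragan), and the surrounding text simply refers the reader there. So there is no ``paper's own proof'' to compare your proposal against.

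That said, your outline is a correct and standard route to the result. The pointwise bound~$|u(\xi)|\leq c\,(I_s g)(\xi)$ does follow from the telescoping-plus-H\"older argument you describe: on each dyadic annulus one gets
\[
\dashint_{B_{r_k}(\xi)}|u(\eta)-u(\zeta)|\,{\rm d}\zeta
\;\le\; c\,r_k^{\,s}\,g(\eta),
\]
whence averaging in~$\eta$ and summing the geometric series in~$r_k^{\,s}$ against~$\dashint_{B_{r_k}(\xi)}g$ reproduces~$I_s g(\xi)$. The noncommutativity of~$\He$ is harmless here: you only use left-invariance of Haar measure, the doubling~$|B_r|\sim r^Q$, and a pseudo-triangle inequality, all of which hold without subtlety. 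Folland--Stein HLS with~$\alpha=s$, $q_1=p$, $q_2=p^*$ then closes the argument. Your worries about constants degenerating are unfounded; the alternative $p=1$/coarea route you mention is also valid but considerably heavier. For the record, the proof actually given in~\cite{KS18} proceeds differently, via weak-type level-set estimates in the spirit of Savin--Valdinoci rather than through a pointwise Riesz-potential domination.
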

    For an analogous fractional Sobolev embedding on stratified Lie groups we refer to~\cite[Theorem~1]{GKR23}, where the authors also investigate some relevant properties of nonlinear fractional eigenvalue problems on stratified Lie groups. Moreover, for further results on fractional Sobolev embedding on stratified Lie groups we also refer to~\cite{GKR23-2}.

   In the event that~$sp >Q$ we have a classical Morrey-type embedding.
   \begin{thm}[Theorem~1.5 in \cite{AM18}]
   Let~$p \in (1,\infty)$,~$s\in(0,1)$ such that~$sp>Q$. For any~$u \in HW^{s,p}(\He)$ we have that
   $$
   \|u\|_{C^{0,\beta}(\He)} \le c \|u\|_{HW^{s,p}(\He)},
   $$
   where~$c \equiv c(n,s,p)>0$ and~$\beta = (sp-Q)/p$.
   \end{thm}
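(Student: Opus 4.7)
The plan is to derive the H\"older bound via a Morrey--Campanato scheme in~$\He$: a fractional Poincar\'e inequality on balls, then a dyadic chaining that converts $L^p$-oscillation decay into a pointwise H\"older modulus of continuity, and finally a separate $L^\infty$ estimate obtained by comparing~$u$ with its average on a unit ball.

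First I would prove the fractional Poincar\'e inequality: for every ball $B_r(\xi_0) \subset \He$,
\begin{equation*}
\int_{B_r(\xi_0)} |u(\xi) - (u)_{B_r(\xi_0)}|^p\,{\rm d}\xi \,\le\, c\, r^{sp}\, [u]_{HW^{s,p}(B_r(\xi_0))}^p,
\end{equation*}
where $(u)_{B_r(\xi_0)}$ denotes the average of~$u$ on $B_r(\xi_0)$. One writes the deviation from the average as a double integral, applies Jensen's inequality, and then multiplies and divides the integrand by $|\eta^{-1}\circ\xi|_{\He}^{Q+sp}$; the elementary bound $|\eta^{-1}\circ\xi|_{\He} \le 2r$ on $B_r(\xi_0)\times B_r(\xi_0)$ supplies a factor $(2r)^{Q+sp}$, while the volume factor $|B_r(\xi_0)|\asymp r^Q$ absorbs $r^Q$, leaving exactly the power $r^{sp}$ in front of the full Gagliardo seminorm.

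Next, for any Lebesgue point $\xi\in\He$ and $r>0$ I would form the dyadic sequence $B_k := B_{r/2^k}(\xi)$ and telescope the averages. Since $B_{k+1}\subset B_k$, H\"older's inequality together with the Poincar\'e estimate above and $|B_k|\asymp (r/2^k)^Q$ give
\begin{equation*}
|(u)_{B_{k+1}}-(u)_{B_k}|\,\le\,c\,(r/2^k)^{s-Q/p}\,[u]_{HW^{s,p}(\He)}\,=\,c\,2^{-k\beta}r^{\beta}\,[u]_{HW^{s,p}(\He)}.
\end{equation*}
Summability of the resulting geometric series (equivalent to $\beta>0$, i.e.~$sp>Q$) and Lebesgue differentiation then yield $|u(\xi)-(u)_{B_r(\xi)}|\le c\,r^{\beta}\,[u]_{HW^{s,p}(\He)}$ for a.e.~$\xi\in\He$. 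Given two Lebesgue points $\xi,\eta\in\He$ with $r:=|\eta^{-1}\circ\xi|_{\He}$, I would then compare both $u(\xi)$ and $u(\eta)$ to the average on the enlarged common ball $B_{2r}(\xi)\supset B_r(\xi)\cup B_r(\eta)$, using the Poincar\'e--dyadic estimate for the three resulting pairs. This produces the sharp bound $|u(\xi)-u(\eta)|\le c\,|\eta^{-1}\circ\xi|_{\He}^{\beta}\,[u]_{HW^{s,p}(\He)}$. For the $L^\infty$ part it is enough to compare $u(\xi)$ with $(u)_{B_1(\xi)}$, dominating the average by $c\,\|u\|_{L^p(\He)}$ through H\"older at the unit scale.

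The argument is essentially Euclidean in spirit, so there is no deep obstacle; the point that requires attention is purely geometric, namely that every length, volume and triangle-type inequality must be read against a homogeneous norm on~$\He$, exploiting $|B_r(\xi_0)|\asymp r^Q$, the left-invariance of $|\eta^{-1}\circ\xi|_{\He}$, and the subadditivity of the Kor\'anyi norm to guarantee the inclusion $B_r(\eta)\subset B_{2r}(\xi)$. The threshold $sp>Q$ is precisely what makes $\beta>0$ and hence the dyadic series summable.
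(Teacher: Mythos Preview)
The paper does not supply a proof of this statement: it is quoted verbatim as Theorem~1.5 of~\cite{AM18} and used as a black box, so there is no argument in the present paper to compare your proposal against. That said, your Morrey--Campanato scheme is correct and is essentially the standard route (and the one followed in~\cite{AM18}): the fractional Poincar\'e inequality you invoke is exactly the Proposition stated in Section~\ref{sec_prel} of the paper, the dyadic telescoping converts the $r^{sp}$ gain into the pointwise bound $|u(\xi)-(u)_{B_r(\xi)}|\le c\,r^\beta [u]_{HW^{s,p}(\He)}$ precisely when $\beta=s-Q/p>0$, and the two-point comparison via the enlarged ball $B_{2r}(\xi)$ is legitimate because the Kor\'anyi gauge is an honest norm on~$\He$ (Cygan~\cite{Cyg81}, recalled in the paper), so the genuine triangle inequality gives $B_r(\eta)\subset B_{2r}(\xi)$ without any extra constant. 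The $L^\infty$ bound at unit scale is likewise correct. In short, your sketch is sound and matches the argument the paper defers to~\cite{AM18}.
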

   Moreover, by applying the same strategy developed in~\cite[Section~2]{AKM18}, one can prove a fractional Poincar\'e-type inequality.
    \begin{prop} 
    	Let~$p \geq 1$ and~$s \in (0,1)$ and~$u \in HW^{s,p}_{\rm loc}(\Omega)$. Then, for any~$B_r(\xi_0) \Subset \Omega$ we have that
    	\begin{equation*}
    		\int_{B_r(\xi_0)}|u-(u)_r|^p \, {\rm d}\xi \leq c \, r^{sp} \int_{B_r(\xi_0)}\int_{ B_r(\xi_0)} \frac{|u(\xi)-u(\eta)|^p}{|\eta^{-1}\circ \xi|_{\He}^{Q+sp}} \,{\rm d}\xi {\rm d}\eta,
    	\end{equation*}
    	where~$c\equiv c(n,p)>0$ and~$(u)_r := \displaystyle\dashint_{B_r(\xi_0)}u\,{\rm d}\xi$.
    \end{prop}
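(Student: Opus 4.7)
The plan is to mimic the classical Euclidean argument and simply verify that nothing breaks when passing to the Heisenberg setting, exploiting only Jensen's inequality, the triangle inequality for the Kor\'anyi-type homogeneous norm, and the homogeneity of the Haar measure with respect to the anisotropic dilations $\delta_\lambda$.

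First I would write, by Jensen's inequality applied to the $p$-th power and the integral average,
\[
 |u(\xi)-(u)_r|^p \;=\; \Big| \dashint_{B_r(\xi_0)} \big(u(\xi)-u(\eta)\big)\,\deta \Big|^p \;\leq\; \dashint_{B_r(\xi_0)} |u(\xi)-u(\eta)|^p\,\deta,
\]
for a.\,e.~$\xi \in B_r(\xi_0)$. Integrating in~$\xi$ over~$B_r(\xi_0)$ and using that the Haar measure $|B_r(\xi_0)| = c(n)\, r^{Q}$ scales with the dilations~$\delta_\lambda$, one obtains
\[
 \int_{B_r(\xi_0)} |u-(u)_r|^p \dxi \;\leq\; \frac{1}{c\, r^{Q}} \int_{B_r(\xi_0)}\int_{B_r(\xi_0)} |u(\xi)-u(\eta)|^p \dxieta.
\]

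Next I would introduce the desired fractional weight by multiplying and dividing by $|\eta^{-1}\circ\xi|_{\He}^{Q+sp}$. For any pair $\xi,\eta \in B_r(\xi_0)$ the pseudo-triangle inequality for the homogeneous norm (equivalent, up to a constant~$\Lambda$, to the Kor\'anyi norm which is a genuine norm) yields $|\eta^{-1}\circ\xi|_{\He} \leq 2\Lambda r$, so that
\[
 |\eta^{-1}\circ \xi|_{\He}^{Q+sp} \;\leq\; c(n,p)\, r^{Q+sp}.
\]
Plugging this bound into the previous estimate gives
\[
 \int_{B_r(\xi_0)} |u-(u)_r|^p \dxi \;\leq\; c\, r^{sp} \int_{B_r(\xi_0)}\int_{B_r(\xi_0)} \frac{|u(\xi)-u(\eta)|^p}{|\eta^{-1}\circ\xi|_{\He}^{Q+sp}} \dxieta,
\]
which is exactly the claimed Poincar\'e-type inequality.

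There is essentially no obstacle here: the argument is completely soft and does not distinguish between the Euclidean and Heisenberg framework, as the only facts used are that~$B_r(\xi_0)$ has Haar measure of order~$r^{Q}$ and that the homogeneous norm is bounded on a ball of radius~$r$ by a multiple of~$r$. The constant~$c$ accumulates the constant coming from the measure scaling and the equivalence constant~$\Lambda$ between the chosen homogeneous norm~$\dd$ and the Kor\'anyi norm~$|\cdot|_{\He}$, hence depends on~$n$ and~$p$ alone, as claimed.
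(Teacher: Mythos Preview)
Your argument is correct. The paper does not actually prove this proposition; it merely states that it follows ``by applying the same strategy developed in~\cite[Section~2]{AKM18}''. What you wrote is precisely the standard direct proof (Jensen plus the trivial diameter bound $|\eta^{-1}\circ\xi|_{\He}\le 2r$ on $B_r\times B_r$), and it is in fact the argument that underlies the Euclidean version in~\cite{AKM18} as well, so there is no meaningful difference in approach.

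Two cosmetic remarks. First, since both the ball~$B_r(\xi_0)$ and the kernel in the statement are written with respect to the Kor\'anyi norm~$|\cdot|_{\He}$, which is a genuine norm (triangle inequality with constant~$1$ and symmetry), you do not need to invoke the equivalence constant~$\Lambda$ at all: $|\eta^{-1}\circ\xi|_{\He}\le |\eta^{-1}\circ\xi_0|_{\He}+|\xi_0^{-1}\circ\xi|_{\He}<2r$ directly. Second, the resulting constant picks up a factor~$2^{Q+sp}$, but since $s\in(0,1)$ this is bounded by $2^{Q+p}$, which is why the stated dependence $c\equiv c(n,p)$ (and not on~$s$) is legitimate.
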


	\vspace{2mm}
	We conclude this section by introducing the  Dirichlet problem related to~$\Lc$ in~\eqref{main_op}, as well as recalling the natural definition of weak solutions.
	
	Let~$\Omega$ be a bounded open set in $\He$ and $g \in HW^{s,p}(\He)$, we consider the Dirichlet problem
	\begin{equation}\label{pbm}
		\begin{cases}
			\Lc u   = f(\cdot,u) & \text{in} \ \Omega,\\[0.5ex]
			u   =g & \text{in} \ \He \smallsetminus\Omega,
		\end{cases}
	\end{equation}
	where~$\Lc$ is given by~\eqref{main_op} and the datum~$u\mapsto f \equiv f(\cdot, u)$ is bounded locally uniformly for $\xi\in~\Omega$.
	
     For any~$g \in HW^{s,p}(\He)$ consider the classes~$\mathcal{K}^{\pm}_g (\Omega)$ defined by
	$$
	\mathcal{K}^{\pm}_g (\Omega):=\Big\{  v \in HW^{s,p}(\He): (g-v)_\pm \in HW^{s,p}_0(\Omega)\Big\},
	$$
	and 
	$$
	\mathcal{K}_g(\Omega) := \mathcal{K}^+_g(\Omega) \cap \mathcal{K}^-_g(\Omega) = \Big\{  v \in HW^{s,p}(\He): v-g \in HW^{s,p}_0(\Omega)\Big\}.
	$$
	We have the following
	\begin{defn}[{\bf Fractional weak solutions}. See Section~2.2 in~{\rm\cite{MPPP23,PP22}}]\label{solution to inhomo pbm}
	\mbox{}\\	A function~$u \in \mathcal{K}^-_g(\Omega)$  {\rm (}$\mathcal{K}^+_g(\Omega)$, respectively{\rm )} is a \textup{weak subsolution} {\rm (}\textup{supersolution}, resp.{\rm)} to~\eqref{pbm} if 
		\begin{eqnarray*}
			&& \int_{\He}\int_{\He}\frac{|u(\xi)-u(\eta)|^{p-2}(u(\xi)-u(\eta))(\psi(\xi)-\psi(\eta))}{\dd(\eta^{-1}\circ \xi)^{Q+sp}}  \,{\rm d}\xi \,{\rm d}\eta \\*[0.5ex]
			&& \hspace{6cm} \leq \big(\geq,\textrm{resp.}\big) \int_{\He}f(\xi,u(\xi))\psi(\xi) \, \,{\rm d}\xi,
		\end{eqnarray*}
		for any nonnegative~$ \psi \in HW_0^{s,p}(\Omega)$.
		\\	A function~$u \in \mathcal{K}_g(\Omega)$ is a \textup{weak solution} to~\eqref{pbm} if it is both a weak sub- and supersolution. 
	\end{defn}
    Few remarks are in order. Firstly, the requirement that~$u \in HW^{s,p}(\He)$ can be weakened assuming~$u \in HW^{s,p}_{\rm loc}(\Omega) \cap L^{p-1}_{sp}(\He)$. Moreover, in case of weak supersolution it makes no difference requiring that~$u \in HW^{s,p}_{\rm loc}(\Omega)$ with~$u_- \in L^{p-1}_{sp}(\He)$, as shown by the following lemma, which will also be one of the keypoints in the proof of the Harnack inequalities in forthcoming Section~\ref{sec_h}.
    
      \begin{lemma}[{\bf Tail estimate}. See~Lemma~2.9 in~{\rm\cite{Pic22}}]
  \mbox{}\\  	Let~$u$ be a weak supersolution in~$B_{2r}(\xi_0)$. Then, there exists~$c \equiv c(n,p,s, \Lambda)>0$ such that
    	\begin{eqnarray*}
    		\Tail(u_+;\xi_0,r) &\le& c\,r^\frac{sp-1-Q}{p-1}[u]_{HW^{h,p-1}(B_r(\xi_0))}+c\,r^{-\frac{Q}{p-1}}\|u\|_{L^{p-1}(B_r(\xi_0))}\\*
    		&& +\, c\,\Tail(u_-;\xi_0,r)+ c\,r^\frac{sp}{p-1} \|f\|_{L^\infty(B_r(\xi_0))}^\frac{1}{p-1},
    	\end{eqnarray*}
    	with
    	$
    	h := \max\left\{0,\frac{sp-1}{p-1}\right\} < s.
    	$
    	In particular, if~$u$ is a weak supersolution in an open set~$\Omega$, then~$ u \in L^{p-1}_{sp}(\He)$.
    \end{lemma}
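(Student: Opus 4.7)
The plan is to test the weak supersolution formulation against a nonnegative cutoff localized in $B_r(\xi_0)$, to exploit the symmetry of the kernel so as to split the resulting double integral into a local piece on $B_r\times B_r$ and a long-range piece on $B_r\times(\He\setminus B_r)$, and then to read off $\Tl(u_+;\xi_0,r)^{p-1}$ as the leading contribution of the long-range piece. Concretely, I would pick $\psi\in C_0^\infty(B_r(\xi_0))$ with $0\le\psi\le 1$, $\psi\equiv 1$ on $B_{r/2}(\xi_0)$, and $\snr{\nabla_\He\psi}\le c/r$, so that $\snr{\psi(\xi)-\psi(\eta)}\le(c/r)\,\dd(\eta^{-1}\circ\xi)$. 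Plugging $\psi$ into the weak supersolution inequality, swapping variables in the contribution from $(\He\setminus B_r)\times B_r$, and using the symmetry of $\dd$ together with the antisymmetry of $t\mapsto\snr{t}^{p-2}t$, one rewrites the bilinear form as
\[
I_{\mathrm{loc}}+2\,I_{\mathrm{tail}}\,\ge\,\int_{B_r(\xi_0)}f(\xi,u)\psi\,\dxi,
\]
where $I_{\mathrm{loc}}$ collects the $B_r\times B_r$ integration and
\[
I_{\mathrm{tail}}\,:=\,\int_{B_r(\xi_0)}\!\int_{\He\setminus B_r(\xi_0)}\frac{\snr{u(\xi)-u(\eta)}^{p-2}(u(\xi)-u(\eta))}{\dd(\eta^{-1}\circ\xi)^{Q+sp}}\,\psi(\xi)\,\deta\,\dxi.
\]

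The key step is to bound $-I_{\mathrm{tail}}$ from below by a positive multiple of $r^{Q-sp}\Tl(u_+;\xi_0,r)^{p-1}$, up to controllable errors. After changing sign so that the integrand becomes $\snr{u(\eta)-u(\xi)}^{p-2}(u(\eta)-u(\xi))\psi(\xi)$, I would invoke the elementary pointwise inequality
\[
\snr{a-b}^{p-2}(a-b)\,\ge\,c_p\,a_+^{p-1}\,-\,C_p\bigl(a_-^{p-1}+\snr{b}^{p-1}\bigr),\qquad a,b\in\er,
\]
(for suitable $c_p,C_p>0$ depending only on $p$, proved by a case analysis on the signs of $a$ and of $a-b$) with $a=u(\eta)$ and $b=u(\xi)$. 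Since $\psi\equiv 1$ on $B_{r/2}(\xi_0)$ and, by the quasi-triangle inequality together with the equivalence of $\dd$ with $\snr{\cdot}_\He$, $\dd(\eta^{-1}\circ\xi)\simeq \snr{\xi_0^{-1}\circ\eta}_\He$ whenever $\xi\in B_{r/2}(\xi_0)$ and $\eta\in\He\setminus B_r(\xi_0)$, the $\xi$-integration on the leading term yields a factor $r^Q$ while the $\eta$-integration reconstructs the tails, so that
\[
-I_{\mathrm{tail}}\,\ge\,c\,r^{Q-sp}\Tl(u_+;\xi_0,r)^{p-1}-c\,r^{Q-sp}\Tl(u_-;\xi_0,r)^{p-1}-c\,r^{-sp}\nr{u}_{L^{p-1}(B_r(\xi_0))}^{p-1}.
\]

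For the local piece the Lipschitz bound on $\psi$ gives
\[
\snr{I_{\mathrm{loc}}}\,\le\,\frac{c}{r}\int_{B_r(\xi_0)}\!\int_{B_r(\xi_0)}\frac{\snr{u(\xi)-u(\eta)}^{p-1}}{\dd(\eta^{-1}\circ\xi)^{Q+sp-1}}\,\dxieta\,=\,\frac{c}{r}\,[u]^{p-1}_{HW^{h,p-1}(B_r(\xi_0))},
\]
with $h(p-1)=sp-1$, i.e. $h=(sp-1)/(p-1)<s$ when $sp>1$; when $sp\le 1$ the kernel $\dd(\eta^{-1}\circ\xi)^{-(Q+sp-1)}$ is integrable over $B_r\times B_r$ and the trivial bound $\snr{u(\xi)-u(\eta)}^{p-1}\le C(\snr{u(\xi)}^{p-1}+\snr{u(\eta)}^{p-1})$ absorbs $\snr{I_{\mathrm{loc}}}$ directly into the $L^{p-1}$-term with $h=0$. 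The datum integral $\int f\psi$ is controlled by $c\,r^Q\nr{f}_{L^\infty(B_r(\xi_0))}$. Combining these bounds with the rearrangement $-2\,I_{\mathrm{tail}}\le I_{\mathrm{loc}}-\int f\psi$, dividing through by $r^{Q-sp}$ and extracting the $(p-1)$-th root delivers the four summands in the asserted estimate; applying this on every ball $B_{2r}(\xi_0)\Subset\Omega$, together with the standing assumption $u_-\in L^{p-1}_{sp}(\He)$, then yields $u\in L^{p-1}_{sp}(\He)$.

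The main obstacle, in my view, is the sign-sensitive algebraic inequality above: one must extract $u_+(\eta)^{p-1}$ as a genuine lower bound for $\snr{u(\eta)-u(\xi)}^{p-2}(u(\eta)-u(\xi))$ while paying only a $u_-(\eta)^{p-1}$ penalty plus a harmless local $L^{p-1}$ correction, uniformly for $p\in(1,\infty)$. Everything else is careful bookkeeping: tracking how the Lipschitz character of $\psi$ dilutes the $(s,p)$-kernel down to an $(h,p-1)$-kernel with $h<s$ on the local part, and verifying the Kor\'anyi equivalences used on the long-range part.
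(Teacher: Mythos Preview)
Your approach is essentially the same as the paper's: test with a smooth cutoff, split the bilinear form into a local $B_r\times B_r$ piece and a cross piece, use the Lipschitz bound on the cutoff to reduce the local kernel to the $HW^{h,p-1}$ seminorm, and apply the pointwise inequality $\snr{b-a}^{p-2}(b-a)\le 2^{p-1}(b_+^{p-1}+a_-^{p-1})-a_+^{p-1}$ (which is exactly your sign-sensitive inequality with $c_p=1$, $C_p=2^{p-1}$) to isolate $\Tl(u_+)^{p-1}$ from the cross piece. The only cosmetic difference is that the paper takes $\phi$ supported in $B_{r/2}$ with $\phi\equiv1$ on $B_{r/4}$, which guarantees a definite gap between $\text{supp}\,\phi$ and $\He\setminus B_r$ and hence the uniform comparability $\dd(\eta^{-1}\circ\xi)\simeq\snr{\xi_0^{-1}\circ\eta}_\He$ needed also for the \emph{error} terms in the tail integral; with your choice of support in all of $B_r$ you should make this gap explicit (e.g.\ $\text{supp}\,\psi\subset B_{3r/4}$) to control the $u_-^{p-1}(\eta)$ and $\snr{u(\xi)}^{p-1}$ contributions.
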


    \begin{proof}
Firstly, we write the weak formulation, for nonnegative $\phi \in C_0^\infty(B_{r/2}(\xi_0))$ such that $\phi \equiv 1$ in $B_{r/4}(\xi_0)$, with $0 \leq \phi \leq 1$ and $|\nabla_{\He} \phi| \leq 8/r$. We have
\begin{eqnarray*}  
\int_{\He}f(\xi,u)\phi(\xi) \dxi & \leq & \int_{B_{r}(\xi_0)} \int_{B_r(\xi_0)} \frac{|u(\xi)-u(\eta)|^{p-2} \big(u(\xi)-u(\eta)\big)\big(\phi(\xi)- \phi(\eta)\big)}{\dd(\eta^{-1}\circ \xi)^{Q+sp}}\dxy
\\*[0.5ex] &&  + \int_{\He \setminus B_r(\xi_0)} \int_{B_{r/2}(\xi_0)} \frac{|u(\xi)-u(\eta)|^{p-2} \big(u(\xi)-u(\eta)\big) \phi(\xi)}{\dd(\eta^{-1}\circ \xi)^{Q+sp}} \dxy 
\\*[0.5ex] & =: & I_1 + I_2.
\end{eqnarray*}
The left-hand side can be treated recalling the regularity assumptions on the nonlinearity~$f$
\[
   \int_{\He} f(\xi,u) \phi(\xi) \dxi \geq - c \, r^Q\|f\|_{L^{\infty}(B_r(\xi_0))}.
\]
The first term can be easily estimated using $|\phi(\xi) - \phi(\eta)| \leq 8|\eta^{-1}\circ\xi|_{\He}/r$ to get
\begin{align*}  
I_1 \leq \frac{c}{r^{\min\{sp,1\}}} \left[u\right]_{HW^{h,p-1}(B_{r}(\xi_0))}^{p-1}
\end{align*}
In order to estimate the second term, we can observe that 
\begin{align*}  
|u(\xi)-u(\eta)|^{p-2} \big(u(\xi)-u(\eta)\big) & \leq 2^{p-1}\big( u_+^{p-1}(\xi)  + u_-^{p-1}(\eta) \big) - u_+^{p-1}(\eta),
\end{align*}
and thus 
\begin{eqnarray*}  
I_2 & \leq & c\, r^{-sp} \left\| u \right\|_{L^{p-1}(B_r(\xi_0))}^{p-1} + c\, r^{Q-sp} {\rm Tail}(u_- ; \xi_0,r)^{p-1} \\
&&  - \frac{r^{Q-sp}}c {\rm Tail}(u ; \xi_0,r)^{p-1}.
\end{eqnarray*}
By combining all the displays above, we obtain the desired estimate. The second statement plainly follows by an application of H\"older's Inequality. 
\end{proof}

    \vspace{2mm}
   
	%
	%
	\section{The De Giorgi-Nash-Moser regularity theory for weak solutions}\label{sec_hold}
	In this section we present some recent results about regularity estimates (up to the boundary) for weak solutions to~\eqref{pbm}. In particular, we focus on De Giorgi-Nash-Moser-type results.  
		
	\subsection{Fundamental estimates} 
 We start recalling some fundamental estimates contained in~\cite{MPPP23}. The first is a Caccioppoli-type inequality which takes into account the presence of the nonlocal tail.
	
	\begin{thm}[{\bf Caccioppoli estimates with tail}. See~Theorem~1.3 in~\cite{MPPP23}]\label{teo_caccioppoli}
		\mbox{}\\ Let~$p\in(1,\infty)$,~$s \in (0,1)$ and let~$u \in HW^{s,p}(\He)$ be a weak subsolution to~\eqref{pbm}. Then, for any~$B_r \equiv B_r(\xi_0) \subset \Omega$ and any nonnegative~$\ph \in C^\infty_0(B_r)$, the following estimate holds true		
		\begin{eqnarray*}
			&&\int_{B_r} \int_{B_r}\frac{|\omega_+(\xi)\ph(\xi)-\omega_+(\eta)\ph(\eta)|^p}{d_{\rm o}(\eta^{-1}\circ \xi)^{Q+sp}} \, \,{\rm d}\xi{\rm d}\eta\\*
			&&\qquad\qquad \leq c\int_{B_r}\int_{B_r}\frac{ \omega_+^p(\xi)|\ph(\xi)-\ph(\eta)|^p}{ d_{\rm o}(\eta^{-1}\circ \xi)^{Q+sp}} \, \,{\rm d}\xi{\rm d}\eta\\*
			&&\qquad\qquad \quad + c \int_{B_r}\omega_+(\xi)\ph^p(\xi) \, \,{\rm d}\xi \biggl(\sup_{\eta \in \textup{supp}\, \ph}\int_{\He \smallsetminus B_r}  \frac{\omega_+^{p-1}(\xi)}{d_{\rm o}(\eta^{-1}\circ \xi)^{Q+sp}} \, \,{\rm d}\xi+\|f\|_{L^\infty(B_r)}\biggr),
		\end{eqnarray*}
		where $\omega_+ := (u-k)_+$ with $k\in\er$, and $c\equiv c\,(
		n,p)>0$.
	\end{thm}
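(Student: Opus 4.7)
The plan is to test the subsolution formulation against the admissible nonnegative test function $\psi := \omega_+\varphi^p \in HW^{s,p}_0(B_r)$, with $\omega_+=(u-k)_+$, and then decompose the resulting double integral over $\He\times\He$ according to where the two variables sit. Since $\psi\equiv 0$ outside $B_r$ and the kernel is symmetric (which holds as $d_{\rm o}$ is a symmetric homogeneous norm), one can split
$$
\int_{\He}\!\int_{\He}(\cdots)\dxieta \,=\, \int_{B_r}\!\int_{B_r}(\cdots)\dxieta + 2\int_{B_r}\!\int_{\He\smallsetminus B_r}(\cdots)\dxieta,
$$
and treat the two pieces separately.

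For the local piece, I would invoke the pointwise algebraic inequality in the vein of \cite[Lemma~3.1]{DKP14}: for every $a,b\in\er$ and $\tau_1,\tau_2\geq 0$,
$$
|a-b|^{p-2}(a-b)\bigl(a_+\tau_1^p-b_+\tau_2^p\bigr)\,\geq\,\tfrac{1}{c}\bigl|a_+\tau_1-b_+\tau_2\bigr|^p - c\,(a_+^p+b_+^p)|\tau_1-\tau_2|^p.
$$
Applied with $a:=u(\xi)-k$, $b:=u(\eta)-k$, $\tau_1:=\varphi(\xi)$, $\tau_2:=\varphi(\eta)$ and integrated against the symmetric kernel on $B_r\times B_r$, this produces a lower bound matching exactly the claimed left-hand side, up to a remainder of the form $\int\!\int_{B_r\times B_r}\omega_+^p(\xi)|\varphi(\xi)-\varphi(\eta)|^p/d_{\rm o}(\eta^{-1}\circ\xi)^{Q+sp}\dxieta$, which is exactly the first term on the right-hand side of the statement.

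For the nonlocal piece, since $\varphi$ is supported in $B_r$, whenever $\eta\in\He\smallsetminus B_r$ the integrand collapses to $|u(\xi)-u(\eta)|^{p-2}(u(\xi)-u(\eta))\,\omega_+(\xi)\varphi^p(\xi)$. I would then use the elementary one-sided bound $|u(\xi)-u(\eta)|^{p-2}(u(\xi)-u(\eta))\geq-(u(\eta)-u(\xi))_+^{p-1}$ together with the observation that on $\{\omega_+(\xi)>0\}$ one has $(u(\eta)-u(\xi))_+\leq(u(\eta)-k)_+=\omega_+(\eta)$. Pulling the factor $\omega_+(\xi)\varphi^p(\xi)$ out of the $\eta$-integral and majorizing what remains uniformly in $\xi\in\textup{supp}\,\varphi$ yields, after swapping the dummy names of $\xi$ and $\eta$ to match the statement, the claimed sup-tail factor multiplied by $\int_{B_r}\omega_+\varphi^p\,\dxi$. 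The source term $\int f(\cdot,u)\psi\,\dxi$ from the weak formulation is trivially dominated by $\|f\|_{L^\infty(B_r)}\int_{B_r}\omega_+\varphi^p\,\dxi$.

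The technical heart of the argument will be the pointwise algebraic inequality invoked above, which is somewhat delicate in the singular range $p\in(1,2)$, where convexity-type estimates degenerate and one needs a careful case analysis on the signs of $a,b$ and on whether $\tau_1$ or $\tau_2$ dominates. Once that lemma is granted, the Heisenberg geometry enters only through the symmetric homogeneous norm $d_{\rm o}$ and the homogeneous dimension $Q$, so the rest of the argument runs parallel to the Euclidean strategy of \cite{DKP14,DKP16} almost verbatim.
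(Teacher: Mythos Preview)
The paper does not actually give a proof of this theorem; it is stated with a reference to~\cite{MPPP23} and then used as a tool in later arguments. Your proposal is correct and follows precisely the strategy of the referenced source (which in turn adapts~\cite{DKP14,DKP16} to the Heisenberg setting): test with~$\psi=\omega_+\varphi^p$, split the double integral into the local~$B_r\times B_r$ and nonlocal~$B_r\times(\He\smallsetminus B_r)$ contributions, apply the pointwise algebraic lemma on the local part, and bound the nonlocal part from below using~$(u(\eta)-u(\xi))_+\le\omega_+(\eta)$ on the set where~$\omega_+(\xi)>0$.
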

    
    \begin{rem} 
    	The same estimate provided by Theorem~{\rm\ref{teo_caccioppoli}} above holds true if one considers weak supersolutions by replacing~$\omega_+$ with~$\omega_-:=(u-k)_-$.
    \end{rem}
	
	As mentioned above, in the nonlocal framework we have to take into account the contributions coming from far. This is done via the second term on the right-hand side of the Caccioppoli inequality; i.~\!e., 
	$$
	\sup_{\eta \in \textup{supp}\, \ph}\int_{\He \smallsetminus B_r} \frac{\omega_+^{p-1}(\xi)}{d_{\rm o}(\eta^{-1}\circ \xi)^{Q+sp}} \, \,{\rm d}\xi \le c\,r^{-sp}\Tail(w_+;\xi_0,r)^{p-1}.
	$$ 
	As for classical De Giorgi-Nash-Moser theory, the following fractional logarithmic lemma is fundamental in order to prove an H\"older continuity estimates.
	
	\begin{lemma}[{\bf Fractional logarithmic Lemma}. See~Lemma~1.4 in~\cite{MPPP23}]\label{lem_log}
		\mbox{} \\ Let~$p\in (1,\infty)$,~$s \in (0,1)$ and let~$u \in HW^{s,p}(\He)$ be a weak solution to~\eqref{pbm} such that~$u \geq 0$ in~$B_{R} \equiv B_R(\xi_0) \subset \Omega$. Then, for any~$B_r \equiv B_r(\xi_0) \subset B_{\frac{R}{2}}(\xi_0)$ and any~$d>0$,
		\begin{eqnarray*}
			&& \int_{B_r}\int_{B_r} \, \frac{\big|\log (u(\xi)+d)-\log(u(\eta)+d)\big|^p}{d_{\rm o}(\eta^{-1}\circ\xi)^{Q+sp}} \,{\rm d}\xi{\rm d}\eta 
			+\int_{B_r}\big(f(\xi,u)\big)_+\big(u(\xi)+d\big)^{1-p}\,{\rm d}\xi\\*     
			&&\qquad\qquad\qquad\qquad\qquad\qquad \quad \leq c r^{Q-sp}\left[1+ d^{1-p}\left(\frac{r}{R}\right)^{sp}\Big(\Tail(u_-;\xi_0,R)^{p-1}+1\Big)\right]\\*
			&&\qquad \qquad\qquad\qquad\qquad\qquad\qquad +\, c\|f\|_{L^\infty(B_{r})}\int_{B_{2r}}(u(\xi)+d)^{1-p} \,{\rm d}\xi,
		\end{eqnarray*}
		where $\Tail(\cdot)$ is defined in~\eqref{tail},~$u_-:=\max\{-u,0\}$, and~$c\equiv c(n,p,s,\Lambda)>0$.
	\end{lemma}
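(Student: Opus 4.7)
The plan is to test the weak supersolution formulation of the equation against
$$
\psi \ :=\ \phi^p\,(u+d)^{1-p},
$$
where $\phi\in C^\infty_0(B_{3r/2}(\xi_0))$ is a standard cutoff with $\phi\equiv 1$ on $B_r$, $0\le\phi\le 1$ and $|\nabla_{\He}\phi|\le c/r$. Since $u\ge 0$ on $B_R\supset B_{3r/2}$ and $d>0$, the quantity $\bar u:=u+d$ is bounded below by $d$ on the support of $\phi$, so $\psi\in HW^{s,p}_0(B_R)$ is an admissible nonnegative test function. Using that $u$ is a weak (super)solution to \eqref{pbm} we get
$$
\int_{\He}\int_{\He}\!\frac{|u(\xi)-u(\eta)|^{p-2}(u(\xi)-u(\eta))\bigl(\psi(\xi)-\psi(\eta)\bigr)}{\dd(\eta^{-1}\circ\xi)^{Q+sp}}\,\dxieta \ \ge\ \int_{\He}f(\xi,u(\xi))\,\psi(\xi)\,\dxi.
$$

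Next I would split the double integral into a local piece on $B_{3r/2}\times B_{3r/2}$ and a nonlocal piece on the complementary set (where $\phi(\eta)=0$ or $\phi(\xi)=0$). For the local part the core tool is the pointwise algebraic inequality of \cite{DKP14,DKP16} (the same proof works verbatim in $\He$ because it is purely algebraic on the values $u(\xi),u(\eta),\phi(\xi),\phi(\eta)$),
$$
|a-b|^{p-2}(a-b)\bigl(\phi_a^p(a+d)^{1-p}-\phi_b^p(b+d)^{1-p}\bigr)\ \le\ -c_1\,\bigl|\log\tfrac{a+d}{b+d}\bigr|^p\max\{\phi_a,\phi_b\}^p+c_2\,|\phi_a-\phi_b|^p,
$$
which yields the desired log-term on the left with the error
$$
\int_{B_{3r/2}}\int_{B_{3r/2}}\frac{|\phi(\xi)-\phi(\eta)|^p}{\dd(\eta^{-1}\circ\xi)^{Q+sp}}\,\dxieta\ \le\ c\,r^{Q-sp}
$$
absorbed on the right (use $|\phi(\xi)-\phi(\eta)|\le c\,\dd(\eta^{-1}\circ\xi)/r$, cut off at $2$, and change variables).

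For the nonlocal piece, where $\phi(\eta)=0$ (by symmetry we only handle this case with a factor $2$), I would bound from above the positive sign of the integrand by
$$
\bigl(u(\xi)-u(\eta)\bigr)_+^{p-1}\,\phi^p(\xi)\,\bar u(\xi)^{1-p}\ \le\ c\bigl(u(\xi)^{p-1}+u_-(\eta)^{p-1}\bigr)\phi^p(\xi)\,\bar u(\xi)^{1-p}\ \le\ c\phi^p(\xi)\bigl(1+d^{1-p}u_-(\eta)^{p-1}\bigr),
$$
then split the $\eta$-integration into $B_R\setminus B_{3r/2}$ (where $u_-\equiv 0$, giving the bare $c r^{Q-sp}$ contribution via $\int_{|\eta^{-1}\circ\xi|\ge cr}|\eta^{-1}\circ\xi|^{-Q-sp}\,\deta\le c r^{-sp}$) and $\He\setminus B_R$ (where a quasi-triangle step $\dd(\eta^{-1}\circ\xi)\simeq\dd(\xi_0^{-1}\circ\eta)$ for $\xi\in B_{3r/2}$ reproduces exactly $R^{-sp}\Tail(u_-;\xi_0,R)^{p-1}$). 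Writing $R^{-sp}=r^{-sp}(r/R)^{sp}$, the nonlocal contribution is thus at most $c r^{Q-sp}\bigl[1+d^{1-p}(r/R)^{sp}(\Tail(u_-;\xi_0,R)^{p-1}+1)\bigr]$.

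Finally, I would write $f=f_+-f_-$ on the right-hand side, move $\int f_+\phi^p\bar u^{1-p}$ to the left, and estimate $\int f_-\phi^p\bar u^{1-p}\le\|f\|_{L^\infty(B_r)}\int_{B_{2r}}\bar u^{1-p}\,\dxi$, thereby obtaining the claimed inequality. The main technical obstacle is verifying that the DKP-type pointwise algebraic inequality still produces the correct constants in the two regimes $p\ge 2$ and $1<p<2$, and tracking the geometry carefully: the absence of a genuine triangle inequality in $\He$ forces the quasi-triangle substitution used in the tail estimate, and one must confirm that the resulting constants depend only on $n,p,s$ and the constant $\Lambda$ comparing $\dd$ with the Kor\'anyi gauge.
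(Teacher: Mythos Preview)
The paper itself does not supply a proof of this lemma; it only records the statement and refers to~\cite{MPPP23}. Your outline is precisely the standard argument of~\cite{DKP14,DKP16} transplanted to the Heisenberg setting---test with $\psi=\phi^p(u+d)^{1-p}$, use the purely algebraic DKP pointwise inequality on the local piece, and control the off-diagonal piece via the tail after a quasi-triangle estimate---and this is exactly the route taken in~\cite{MPPP23}, as the present paper itself emphasizes when it describes these results as \ap the natural extension in the Heisenberg setting of the Euclidean counterpart proven\dots\ in~\cite{DKP14,DKP16}''. So your approach matches the intended one.

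One small slip to fix: with your cutoff supported in $B_{3r/2}$, the bound on $\int f_-\,\phi^p(u+d)^{1-p}$ requires $\|f\|_{L^\infty(B_{3r/2})}$, not $\|f\|_{L^\infty(B_r)}$. Either adjust the radius in the $L^\infty$ norm (the stated estimate in~\cite{MPPP23} is not sensitive to this choice of concentric ball), or take the cutoff to live in a slightly smaller ball; the rest of the argument is unaffected.
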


	\subsection{Local boundedness and H\"older continuity}
	As in the classical regularity theory, the Caccioppoli inequality is the starting point in order to build a proper iteration scheme which will lead to establish an $L^\infty-L^p$ bound. 
    \begin{thm}[{\bf Local boundedness}. See Theorem 1.1 in~\cite{MPPP23}]\label{teo_bdd}
    	Let~$p \in (1,\infty)$,~$s\in (0,1)$, let~$u \in HW^{s,p}(\He)$ be a weak subsolution to~\eqref{pbm}, and let~$B_r \equiv B_r(\xi_0)   \subset \Omega$. Then the following estimate holds true, for any~$\delta \in (0,1]$,
    	\begin{equation}\label{eq_bdd}
    		\esssup_{B_{r/2}}u \, \leq\,   \delta \,\Tail(u_+; \xi_0, r/2) + c\,\delta^{-\frac{(p-1)Q}{sp^2}}  \left(\,\dashint_{B_r}u_+^p \, {\rm d}\xi\right )^\frac{1}{p}, 
    	\end{equation}
    	where~$\Tail(\cdot)$ is defined in~\eqref{tail},~$u_+:=\max\left\{u,\, 0\right\}$ is the positive part of the function~$u$, and the positive constant~$c$ depends only on $n,s,p,\|f\|_{L^\infty},$ and $\Lambda$.
    \end{thm}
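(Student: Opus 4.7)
The plan is to run a standard De Giorgi-type iteration over a nested sequence of shrinking balls and increasing truncation levels, combining the Caccioppoli inequality in Theorem~\ref{teo_caccioppoli} with the fractional Sobolev embedding in Theorem~\ref{sobolev}. The parameter $\delta$ will be introduced precisely to absorb the long-range (tail) contribution which appears in the Caccioppoli estimate.

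First I would fix the geometric setup: concentric balls $B_j := B_{r_j}(\xi_0)$ with $r_j := r/2 + r/2^{j+1}$, truncation levels $k_j := k(1-2^{-j})$ for a level $k>0$ to be chosen, the excesses $\omega_j := (u-k_j)_+$, and cut-off functions $\phi_j \in C_0^\infty(B_{(r_j+r_{j+1})/2})$ with $\phi_j \equiv 1$ on $B_{r_{j+1}}$ and $|\nabla_{\mathds{H}^n}\phi_j|\leq c\, 2^j/r$. Applying Theorem~\ref{teo_caccioppoli} to $\omega_j$ with the test function $\phi_j$, the gradient term is standard; the crucial point is the tail term
$$
\sup_{\eta \in \textup{supp}\,\phi_j}\int_{\mathds{H}^n \smallsetminus B_{r_j}}\frac{\omega_j^{p-1}(\xi)}{d_{\rm o}(\eta^{-1}\circ\xi)^{Q+sp}}\,{\rm d}\xi,
$$
which via a pseudo-triangle inequality on the homogeneous norm is bounded in terms of $r^{-sp}2^{j(Q+sp)}\,\Tail(u_+;\xi_0,r/2)^{p-1}$. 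This is the genuinely non-Euclidean step: one must replace the center $\eta$ (which varies in $B_{r_j}$) by $\xi_0$, losing only a geometric factor.

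Next I would apply the Sobolev embedding of Theorem~\ref{sobolev} to $\omega_j \phi_j \in HW^{s,p}_0(B_{r_j})$, raise to the critical exponent $p^\ast=Qp/(Q-sp)$, and combine with the Caccioppoli estimate and Hölder's inequality on the super-level set $\{u>k_{j+1}\}\cap B_{r_{j+1}}$. Using $\omega_j \geq k_{j+1}-k_j = k\,2^{-(j+1)}$ on this set to convert level-set measures into $L^p$-integrals, one arrives at the recursive inequality
$$
A_{j+1}\,\leq\, \frac{c\, C_0^{\,j}}{k^{\,sp^2/(Q-sp)}}\,A_j^{\,1+sp/(Q-sp)}\qquad \text{with}\qquad A_j := \dashint_{B_{r_j}} \omega_j^{p}\,{\rm d}\xi,
$$
provided one has first imposed the smallness condition $k \geq \Tail(u_+;\xi_0,r/2)/\text{(const)}$ that absorbs the tail term into the iteration. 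A standard geometric-iteration lemma then forces $A_j \to 0$, hence $(u-k)_+\equiv 0$ on $B_{r/2}$, whenever $A_0$ satisfies a smallness threshold of the form $A_0 \leq c^{-1}k^{\,sp^2/(Q-sp)}C_0^{-(Q-sp)/sp}$.

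Finally, to recover the quantitative bound \eqref{eq_bdd} I would reverse-engineer the choice of $k$. Setting $k := \delta\,\Tail(u_+;\xi_0,r/2) + M$, the absorption condition on the tail is met automatically for the first summand, and solving the smallness threshold for $M$ gives $M = c\,\delta^{-(p-1)Q/(sp^2)}\big(\dashint_{B_r}u_+^p\,{\rm d}\xi\big)^{1/p}$; the exponent comes directly from balancing $\delta^{-(p-1)}$ (arising from $k^{p-1}$ in the tail absorption) against the iteration exponent $sp^2/(Q-sp)$ divided through by $p$ and $Q/(Q-sp)$. The main obstacle is twofold: bookkeeping the precise exponent $(p-1)Q/(sp^2)$ through the nonlinear recursion, and executing the tail pseudo-triangle reduction cleanly in the Heisenberg geometry, where $d_{\rm o}$ is only a pseudo-metric and no genuine triangle inequality is available — one must rely on the equivalence with $|\cdot|_{\mathds{H}^n}$ and the homogeneity of $\delta_\lambda$ to control $d_{\rm o}(\eta^{-1}\circ\xi)$ by $d_{\rm o}(\xi_0^{-1}\circ\xi)$ uniformly for $\eta\in B_{r_j}(\xi_0)$ and $\xi\notin B_r(\xi_0)$.
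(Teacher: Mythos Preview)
Your proposal is correct and follows exactly the approach the paper indicates: the paper does not give a full proof of Theorem~\ref{teo_bdd} here but only remarks that ``the Caccioppoli inequality is the starting point in order to build a proper iteration scheme which will lead to establish an $L^\infty$--$L^p$ bound,'' referring to~\cite{MPPP23} for the details. Your De~Giorgi iteration via Theorem~\ref{teo_caccioppoli} and Theorem~\ref{sobolev}, with the tail absorbed through the choice $k\geq \delta\,\Tail(u_+;\xi_0,r/2)$, is precisely the strategy of~\cite{DKP16} transplanted to~$\He$ and is what~\cite{MPPP23} carries out; the only point you should flag explicitly is that Theorem~\ref{sobolev} requires $sp<Q$, so the borderline case $sp\geq Q$ must be handled separately (e.g.\ by lowering the differentiability exponent or invoking the Morrey embedding, after which boundedness is immediate).
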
  
    We stress that the presence of the parameter~$\delta$ allows a precise interpolation between the local and nonlocal terms in~\eqref{eq_bdd}. Moreover, to the best of our knowledge, the boundedness result presented in Theorem~\ref{teo_bdd} above is new even in the linear case when~$p=2$.
    
    In the non-fractional setting when~$s=1$  other boundedness estimates (of the horizontal gradient) are available, see e.~\!g.~\cite{Muk21,MZ21} for the case of nonlinear equations in the Heisenberg group~$\He$ modeled on the $p$-subLaplacian; see also \cite{CDG93} for more general nonlinear operators in Carnot-Caratheodory spaces, and~\cite{CM22} for general quasi-linear equations with H\"ormander vector fields having $p$-Laplacian growth conditions.
    
    \vspace{2mm}
    Combining all the results above, we can finally extend the classical results by De Giorgi-Nash-Moser in the nonlocal Heisenberg framework.  
    \begin{thm}[{\bf H\"older continuity}. See Theorem 1.2 in~\cite{MPPP23}]\label{teo_holder}
     Let~$p\in(1,\infty)$, $s \in (0,1)$, and let~$u \in HW^{s,p}(\He)$ be a weak solution to~\eqref{pbm}. Then~$u$ is locally H\"older continuous in~$\Omega$. In particular, there are constants~$\alpha < sp/(p-1)$ and~$c>0$, both depending only on~$n,p,s,\Lambda$ and~$\|f\|_{L^\infty(B_r)}$, such that if~$B_{2r}\equiv B_{2r}(\xi_0) \subset \Omega$ then
    	\begin{equation*}
    		\osc\limits_{B_\varrho} \, u 
    		\, \leq\, c\, \left(\frac{\varrho}{r}\right)^\alpha \left[\Tail(u;\xi_0,r) + \left(\,\dashint_{B_{2r}}\snr{u}^p \,{\rm d}\xi\right)^\frac{1}{p}\right],
    	\end{equation*} 
    	for every~$\varrho\in (0,r)$.
    \end{thm}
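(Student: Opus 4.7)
The plan is to establish the classical oscillation-decay mechanism of De~Giorgi on a dyadic sequence of concentric balls $B_j := B_{r_j}(\xi_0)$ with $r_j = \sigma^j r$ for some $\sigma \in (0,1/4)$ to be chosen quantitatively, and then to read off H\"older regularity with exponent $\alpha$ determined by $\sigma^\alpha = 1-\theta$ for a small $\theta > 0$. Set $\omega_0 := 2\Tail(u;\xi_0,r) + 2(\dashint_{B_{2r}}|u|^p\,{\rm d}\xi)^{1/p}$ and aim to produce inductively monotone sequences $m_j \le M_j$ with $m_j \le u \le M_j$ on $B_j$ and $M_j-m_j \le \omega_j := \sigma^{j\alpha}\omega_0$, once the tail corrections at each scale have been absorbed into $\omega_j$.

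The heart of the induction is a dichotomy at scale $r_j$. After translating so that $u-m_j \ge 0$ on $B_j$, either $|\{u-m_j \ge \omega_j/4\} \cap B_{j+1}| \ge |B_{j+1}|/2$ or the symmetric statement holds for $M_j-u$; I would treat the first case, the second being identical up to sign. Applying Lemma~\ref{lem_log} to $v := u-m_j+d$ with $d = \omega_j 2^{-k}$, combined with a fractional Poincar\'e-type inequality for $\log(v/d)$, yields an estimate of the type $|\{v \le \omega_j 2^{-k}\} \cap B_{j+1}| \le c\,k^{-(p-1)/p}|B_{j+1}|$. Choosing $k$ sufficiently large makes this measure smaller than any prescribed threshold $\delta$.

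Once the measure-smallness estimate is in hand, I would enter the De~Giorgi regime, iterating the Caccioppoli inequality of Theorem~\ref{teo_caccioppoli} applied to the truncations $(\omega_j 2^{-k_0-i}-v)_-$ on the shrinking balls $B(\xi_0, r_{j+1}(1+2^{-i}))$ and closing the scheme with the Sobolev embedding of Theorem~\ref{sobolev}. A standard geometric-convergence argument then upgrades the $L^1$ smallness into the pointwise bound $v \ge \omega_j 2^{-k_0-1}$ on $B_{j+2}$, giving the oscillation reduction $M_{j+1}-m_{j+1} \le (1-2^{-k_0-1})\omega_j$. Setting $\sigma^\alpha := 1-2^{-k_0-1}$ then closes the induction and yields the desired decay $\omega_j \le \sigma^{j\alpha}\omega_0$, from which the displayed oscillation bound follows by choosing $j$ so that $r_{j+1} \le \varrho < r_j$.

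The main obstacle, and the source of the constraint $\alpha < sp/(p-1)$, is the uniform control of the nonlocal tail contributions at every step. Both the Caccioppoli estimate and the logarithmic lemma generate tail-type terms which, split over the dyadic annuli $B_{r_i}\smallsetminus B_{r_{i+1}}$ for $i<j$, produce geometric series of the form $\sum_{i<j}\sigma^{(j-i)sp}\omega_i^{p-1}$. Inserting the inductive bound $\omega_i \le \sigma^{i\alpha}\omega_0$, this series is convergent and dominated by a constant multiple of $\omega_j^{p-1}$ precisely when $\alpha(p-1) < sp$, so that the tails can be reabsorbed into a redefined $\omega_j$ up to a harmless multiplicative constant. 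Tracking all dependencies so that $k_0$, $\sigma$, $\alpha$, and the reabsorption step remain mutually compatible -- and so that the datum $\|f\|_{L^\infty}$, which enters linearly into both the Caccioppoli and the logarithmic estimates, is controlled uniformly in $j$ by its contribution in $\omega_0$ -- is the delicate combinatorial point of the proof.
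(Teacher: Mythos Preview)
Your proposal is correct and follows essentially the same De~Giorgi oscillation-decay scheme as the paper: dichotomy, logarithmic lemma for the density estimate, Caccioppoli plus Sobolev for the De~Giorgi iteration, and tail control via the dyadic annular summation yielding the constraint~$\alpha(p-1)<sp$. The only packaging difference is that the paper takes the additive constant in the log lemma to be~$d=\sigma^{sp/(p-1)}\omega_j$ rather than your~$d=2^{-k}\omega_j$, which ties the improvement factor~$1-\sigma^{sp/(p-1)}$ directly to~$\sigma$ and makes the compatibility~$\sigma^\alpha\ge 1-\sigma^{sp/(p-1)}$ transparent; in your parameterization the same constraint forces~$2^{-k_0}\approx\sigma^{sp/(p-1)}$, which is exactly the ``delicate combinatorial point'' you flag at the end.
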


    \begin{proof}
    We will present a sketch of the proof by conveniently dividing it in a few steps.
    \mbox{}
    \\ {\it Step 1}. \  
     For any~$j \in \mathds{N}$, let~$0 <\rr < R/2$ for~$R$ such that~$B_R \subset \Omega$,
              $$
              \rr_j := \sigma^{\,j} \frac{\rr}{2}, \quad \sigma \in \left(0,\frac{1}{4}\right], \quad B_j \equiv B_{\rr_j}\,
              $$ 

              $$
              \omega(\rr_0) \sim  \Tl(u;\rr/2)+ \left(\,\dashint_{B_\rr}u_+^p\,{\rm d}\xi\right)^\frac{1}{p}
              $$
               and
              $$
              \omega(\rr_j):= \left(\frac{\rr_j}{\rr_0}\right)^\alpha \omega(\rr_0) \qquad \text{for some}~\alpha < \frac{sp}{p-1}.
              $$ 
              It suffices to prove that  under the notation above
              	\begin{equation*}
              		\label{s4 1}
              		\osc\limits_{B_j} \, u \leq \omega(\rr_j), \qquad \forall j =0,1,2,\dots
              	\end{equation*}
and this can be achieved by induction. Indeed,
the case~when~$j=0$ plainly follows by the supremum estimate in~Theorem~\ref{teo_bdd} by taking~$\delta \equiv 1$, since both the functions $(u)_+$ and $(u)_-$ are weak subsolution. 
Assume now that it holds for all~$i \in \big\{1,\dots,j\big\}$ for some~$j \geq 0$ and below we will prove that it holds also for~$j+1$.

   \mbox{}
    \\ {\it Step 2}. \  
        With no loss of generality, one can assume that
              \[ 
              		\frac{\big|{2B_{j+1} \cap \big\{u \geq \essinf_{B_j}u+\omega(\rr_j)/2\big\}}\big|}{\big|{2B_{j+1}}\big|} \,\geq\, \frac{1}{2},
              \]
              	so that, denoting~$u_j := u -\essinf_{B_j}u \geq 0$ in~$B_j$,  we have 
              	\begin{equation*}       	
              		\frac{\big|2B_{j+1}\cap \{u_j \geq \omega(\rr_j)/2\}\big|}{\big|2B_{j+1}\big|}\geq \frac{1}{2} \qquad 
              		\sup_{B_i}|u_j| \leq 2 \omega(\rr_i), \quad \forall i \in \{1,\dots, j\}.
              	\end{equation*}
	      	
              	Consider now the function~$v$ defined as follows,
              \[
              		v := \min \left\{\left(\log \left(\frac{\omega(\rr_j)/2+\sigma^{\frac{sp}{p-1}}}{u_j+\sigma^{\frac{sp}{p-1}}}\right)\right)_+,\, 	\kappa \right\}, \qquad \kappa>0
               \]
               Thanks to the fractional Log-Lemma~\ref{lem_log}, we have
            \[
              	   \dashint_{2B_{j+1}}|v-(v)_{2B_{j+1}}|^p\,{\rm d}\xi              
	\, \leq \,    1.
              	\]
   \mbox{}
    \\ {\it Step 3}. \  Let
   	\begin{align*}
	    \kappa & = \frac{1}{|2B_{j+1} \cap \{u_j \geq \omega(\rr_j)/2\}|}\int_{2B_{j+1} \cap \{u_j \geq \omega(\rr_j)/2\}} \kappa \,{\rm d}\xi\\*[0.5ex]
              		& = \frac{1}{|2B_{j+1} \cap \{u_j \geq \omega(\rr_j)/2\}|}\int_{2B_{j+1} \cap \{v=0\}}\kappa \,{\rm d}\xi \\*[0.5ex]
              		& \leq \frac{2}{|2B_{j+1}|} \int_{2B_{j+1}}(\kappa-v)\,{\rm d}\xi= 2[\kappa-(v)_{2B_{j+1}}]
              	\end{align*}
              	Integrating on~$2B_{j+1} \cap \{v=\kappa\}$, we get
              	\begin{align*}
              		\frac{|2B_{j+1}\cap \{v=\kappa\}|}{|2B_{j+1}|} \kappa & \leq \frac{2}{|2B_{j+1}|}\int_{2B_{j+1}\cap \{v = \kappa\}}[\kappa-(v)_{2B_{j+1}}]\,{\rm d}\xi\\*[0.5ex]
              		& \leq \frac{2}{|2B_{j+1}|}\int_{2B_{j+1}}|v-(v)_{2B_{j+1}}|\,{\rm d}\xi \lesssim 1.
              	\end{align*}
              	Taking~$\kappa \ \approx - \log \big( \sigma^\frac{sp}{p-1}\big)$ 
              	\begin{equation}\label{step0}
              	\frac{|2B_{j+1} \cap \{u_j \leq 2 \sigma^\frac{sp}{p-1}\omega(\rr_j)\}|}{|2B_{j+1}|} 
              		\,\lesssim\, \frac{1}{\log \left(\frac{1}{\sigma}\right)}
              	\end{equation}
\mbox{}
\ {\it Step 4}. A second iterative scheme is now needed. 
              	For any~$i \in \mathds{N}$, let
              	$$
              	\rr^{(i)} := \rr_{j+1}+2^{-i}\rr_{j+1} \quad B^{(i)}\equiv B_{\rr^{(i)}} \quad \p_i \in C^\infty_0~\text{cut-off}  \quad \kappa_i := (1+2^{-i}) \sigma^\frac{sp}{p-1} \omega(r_j) \,.
              	$$ 
 We denote by              	$$
                A_i := \frac{|B^{(i)} \cap \{w_i >0\}|}{|B^{(i)}|} \qquad  \text{for}~w_i := (\kappa_i-u_j)_+.
              	$$
 We have the following estimate,
                	\begin{eqnarray*}
              	&&A_{i+1}^\frac{p}{p^*}(\kappa_i-\kappa_{i+1})^p \\
              	&&\qquad \leq |B^{(i+1)}|^{-\frac{p}{p^*}}\norma{w_i\p_i}_{L^{p^*}(B^{(i)})}^p\\*
              	&& \qquad \leq r^{sp-Q}_{j+1}[w_i\p_i]_{W^{s,p}(B^{(i)})}\\
              	&&\qquad \leq \int_{B^{(i)}}\int_{B^{(i)}}  \frac{w_i^p(\xi)|\p_i(\xi)-\p_i(\eta)|^p}{|\eta^{-1} \circ \xi|_{\He}^{Q+sp}} \, \,{\rm d}\xi \,{\rm d}\eta\\*
              	&&\qquad\qquad  +  \norma{w_i\p_i^p}_{L^1(B^{(i)})} \biggl(\esssup_{\eta \in \textup{supp}(\p_i)}\int_{\He\smallsetminus B^{(i)}} \frac{w_i^{p-1}(\xi) \, \,{\rm d}\xi}{|\eta^{-1} \circ \xi|_{\He}^{Q+sp}} +\norma{f}_{L^\infty}\biggr)\notag\,,
              	\end{eqnarray*}
	where we also used the Folland-Stein-Sobolev embedding and the Caccioppoli inequality in Theorem~\ref{teo_caccioppoli}. It follows,
	$$
              	A_{i+1} \le \text{c}_{\rm o} \text{b}^i A_i^\frac{Q}{Q-sp}, \qquad \text{c}_{\rm o},\text{b} \equiv \text{c}_{\rm o},\text{b}({\tt data}) >1.
$$ 
	Now, by recalling a classical lemma in the Euclidean setting, it remains only to prove that for a proper~$\nu^* \equiv\nu^*({\tt data})$ the following estimate holds,
              	\begin{equation*}
              		\label{s4 18}
              		A_0 := \frac{|2B_{j+1} \cap \{u_j \leq 2\sigma^{\frac{sp}{p-1}} \omega(\rr_j)\}|}{|2B_{j+1}|}
              		\, \leq\, \nu^*.
              	\end{equation*}
                For this, we use the estimate in~\eqref{step0} and a proper choice of~$\sigma \equiv \sigma(\nu^*)$ to get
              	\[
              	\lim_{i \to \infty}A_i = 0 \,.
              	\]
               We have hence shown that 
               \begin{eqnarray*}
               \osc_{B_{j+1}} \, u 
              \,= \, (1-d)\sigma^{-\alpha}\omega(\rr_{j+1}).
               \end{eqnarray*}
               
               Finally, taking~$\alpha \in \left(0, \frac{sp}{p-1}\right)$ small enough so that
               \[
               \sigma^\alpha \geq 1-d=1-\sigma^\frac{sp}{p-1}\,
               \]
             will               lead to the desired estimate,
              \[
              \osc_{B_{j+1}} \, u \leq \omega(\rr_{j+1}).
               \]          
        \end{proof}

    In the linear case, when $p=2$,  for what concerns classical H\"older regularity results for linear integro-differential operators in a very wide class of metric measure spaces, we refer to the important paper by Chen and Kumagai~\cite{CK08}. Moreover, we recall the result proven by Ferrari and Franchi when~$p=2$ for fractional subLaplacians in Carnot group via a Dirichlet-to-Neumann-type extension, which, as mentioned in the introduction, is not available in the more general framework we are dealing with. 
        
    For some further $C^{1,\alpha}$-regularity estimates in the local case when~$s=1$ we refer the interested reader to~\cite{Cap97,CG03,MM07,MZ21,MS21,CM22} and the references therein.

    \subsection{Nonlocal Harnack inequalities} \label{sec_h}
        Now, we are ready to focus on Harnack-type inequalities for  weak solutions to~\eqref{pbm}.  Forthcoming Theorems~\ref{thm_harnack} and~\ref{thm_weak} do generalize to the Heisenberg (and to the non-homogeneous setting) the results obtained by Di Castro, Kuusi and one of the authors in~\cite{DKP14} for nonlinear and nonlocal integro-differential operators with  $(s,p)$-kernels. The proofs in~\cite{DKP14} rely on the supremum estimate obtained in Theorem~\ref{teo_bdd} together with a suitable expansion of positivity which takes into account the presence of the nonlocal tail; a finite Moser iteration argument will give the desired results.
     Further efforts are needed in order to deal with the limiting case when~$sp=Q$ of the fractional Sobolev embedding; see Theorem~\ref{sobolev} above. 

    \begin{thm}[{\bf Nonlocal Harnack inequality}. See Theorem 1.1  in~\cite{PP22}]\label{thm_harnack}
    \mbox{}\\    	For any~$s \in (0,1)$ and any~$p \in (1,\infty)$, let~$u \in HW^{s,p}(\He)$ be a weak solution to~\eqref{pbm} such that~$u \geq 0$ in~$B_R \equiv B_R(\xi_0) \subset \Omega$. Then, for any~$B_r$ such that~$B_{6r}\subset B_R$, it holds
    	\begin{equation}\label{eq_harnack}
    		\esssup_{B_{r}}u 
    		\, \leq \,
    		c\, \essinf_{B_r}u + c \left(\frac{r}{R}\right)^\frac{sp}{p-1}\Tl(u_-;\xi_0,R) +c\,r^\frac{sp}{p-1}\|f\|^\frac{1}{p-1}_{L^\infty(B_R)}\,,\\[1.3ex]
    	\end{equation}
    	where~$\Tail(\cdot)$ is defined in~\eqref{tail},~$u_-:=\max\{-u,0\}$ is the negative part of the function~$u$, and~$c \equiv c(n,s,p,\Lambda)>0$.
    \end{thm}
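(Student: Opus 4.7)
\emph{Plan.} I would follow the nonlocal-Harnack strategy of Di~Castro--Kuusi--Palatucci~\cite{DKP14}, adapted to the Heisenberg setting, by combining the local boundedness estimate (Theorem~\ref{teo_bdd}), the fractional logarithmic lemma (Lemma~\ref{lem_log}), and a finite De~Giorgi iteration in the spirit of Step~4 in the proof of Theorem~\ref{teo_holder}. The argument splits into two halves: an upper bound on $\esssup_{B_r}u$ in terms of an $L^p$-average plus a tail correction, and a ``weak-Harnack'' lower estimate on the same $L^p$-average by $\essinf_{B_r}u$ plus a correction of the same type.

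For the upper half, I apply Theorem~\ref{teo_bdd} with the interpolation parameter $\delta\in(0,1]$ on a slightly enlarged ball. Since $u\geq 0$ in $B_R$, the positive tail $\Tail(u_+;\xi_0,\rho)$ splits into a near contribution over $B_R\smallsetminus B_\rho$, controlled by $\esssup_{B_{R}}u$, and a far contribution over $\He\smallsetminus B_R$. The near part is absorbed into the left-hand side by a standard dyadic iteration lemma over radii $\rho\in[r,R]$ (as in~\cite{DKP14}), while the far part is rephrased in terms of $\Tail(u_-;\xi_0,R)$ after writing $u_+=u+u_-$ and bounding $|u|$ outside $B_R$ via $u_-$ and the supremum already present on the left. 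The outcome is a sup estimate of the form
\[
\esssup_{B_r}u \,\leq\, c\Bigl(\dashint_{B_{R/2}}u^p\,{\rm d}\xi\Bigr)^{1/p} + c\Bigl(\tfrac{r}{R}\Bigr)^{sp/(p-1)}\Tail(u_-;\xi_0,R) + c\,r^{sp/(p-1)}\|f\|_{L^\infty(B_R)}^{1/(p-1)}.
\]

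The substantial half is the weak-Harnack bound on the $L^p$-average by the infimum. Starting from Lemma~\ref{lem_log} applied with $d\sim \essinf_{B_{R/2}}u+$ (the tail/source correction), the fractional Poincar\'e inequality recalled in Section~\ref{sec_prel} yields a BMO-type control on $\log(u+d)$, hence a measure-theoretic estimate on the set where $u$ is much larger than $d$. A finite Moser/De~Giorgi iteration -- based on the Caccioppoli inequality with tail (Theorem~\ref{teo_caccioppoli}) and the Folland--Stein--Sobolev embedding (Theorem~\ref{sobolev}), exactly as in Step~4 of the proof of Theorem~\ref{teo_holder} -- then upgrades this measure information into a pointwise lower bound on $u$ in $B_r$, giving
\[
\Bigl(\dashint_{B_{R/2}}u^p\,{\rm d}\xi\Bigr)^{1/p} \,\leq\, c\,\essinf_{B_r}u + c\Bigl(\tfrac{r}{R}\Bigr)^{sp/(p-1)}\Tail(u_-;\xi_0,R) + c\,r^{sp/(p-1)}\|f\|_{L^\infty(B_R)}^{1/(p-1)}.
\]
Chaining this with the sup estimate above produces~\eqref{eq_harnack}. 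The main obstacle is closing this iteration cleanly in the limiting case $sp=Q$, where Theorem~\ref{sobolev} degenerates and one must substitute a Trudinger-type embedding, as explicitly flagged in the paragraph preceding the theorem statement.
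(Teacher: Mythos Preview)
Your overall architecture matches the sketch the paper gives (supremum estimate + expansion of positivity via the logarithmic lemma + finite Moser/De~Giorgi iteration, following~\cite{DKP14}), and the second half of your plan is fine. There is, however, a genuine gap in the first half.

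The step you describe as ``the far part is rephrased in terms of $\Tail(u_-;\xi_0,R)$ after writing $u_+=u+u_-$ and bounding $|u|$ outside $B_R$ via $u_-$ and the supremum already present on the left'' does not work as written. Outside $B_R$ the function $u$ is unconstrained, so $u_+$ and $u_-$ are algebraically independent there: no pointwise identity lets you dominate $\int_{\He\smallsetminus B_R} u_+^{p-1}\,\dd(\cdot)^{-Q-sp}$ by the corresponding integral with $u_-$ and by $\esssup_{B_r}u$. The passage from $\Tail(u_+;\xi_0,r)$ to $\Tail(u_-;\xi_0,R)$ is a PDE fact, not an algebraic one: one must test the equation with a cutoff supported in $B_r$ and exploit the sign $u\ge 0$ in $B_R$ to produce the positive tail on one side and the negative tail (plus $\sup_{B_r}u$ and the source term) on the other. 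This is precisely the content of the Tail estimate recorded as Lemma~2.9 in the present paper (the analogue of~\cite[Lemma~4.2]{DKP14}), and it is the missing ingredient in your outline. Once you invoke that lemma you get
\[
\Tail(u_+;\xi_0,r/2)\ \le\ c\,\esssup_{B_r}u \;+\; c\Bigl(\tfrac{r}{R}\Bigr)^{\frac{sp}{p-1}}\Tail(u_-;\xi_0,R)\;+\;c\,r^{\frac{sp}{p-1}}\|f\|_{L^\infty(B_R)}^{\frac{1}{p-1}},
\]
and then the choice of small~$\delta$ in Theorem~\ref{teo_bdd} together with a standard absorption/iteration over radii closes the sup estimate exactly as you indicate. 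Replace the algebraic handwave by a reference to this tail lemma and your plan is complete.
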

    
    Moreover, in the case when~$u$ is merely a weak supersolution to~\eqref{pbm}, in clear accordance with the Euclidean framework, the following weak Harnack inequality can be proved.
    \begin{thm}[{\bf Nonlocal weak Harnack inequality}. See Theorem 1.2 in~\cite{PP22}]\label{thm_weak}
    \mbox{}\\	For any~$s \in (0,1)$ and any~$p \in (1,\infty)$, let~$u \in HW^{s,p}(\He)$ be a weak supersolution to~\eqref{pbm} such that~$u \geq 0$ in~$B_R \equiv B_R(\xi_0) \subseteq \Omega$.
    	Then, for any~$B_r$ such that~$B_{6r}\subset B_R$, it holds
    	\begin{equation}\label{eq_weakharnack}
    		\left( \ \dashint_{B_{r}}  u^\mathfrak{t}\,{\rm d}\xi\,\right)^\frac{1}{\mathfrak{t}} \ \leq \ c\,\essinf_{B_{\frac{3}{2}r}} u 
    		\, +c \left(\frac{r}{R}\right)^\frac{sp}{p-1} \Tail(u_-;\xi_0,R) \, + c\,\mathcal{F},
    	\end{equation}
    	where
    	$$
    	\mathcal{F} := 
    	\begin{cases}
    		r^\frac{Qsp}{\mathfrak{t}(Q-sp)}\|f\|_{L^\infty(B_R)}^\frac{Q}{\mathfrak{t}(Q-sp)} \qquad \text{for} \ \mathfrak{t}<\frac{Q(p-1)}{Q-sp} &\quad  \text{if} \ sp<Q,\\[0.8ex]
    		r^\frac{Q(s-\varepsilon)}{\mathfrak{t}\varepsilon}\|f\|_{L^\infty(B_R)}^\frac{s}{\mathfrak{t}\varepsilon}  \qquad \text{for any} \ s-{Q}/{p}< \varepsilon<s \ \text{and}\  \mathfrak{t}<\frac{(p-1)s}{\varepsilon} &\quad  \text{if} \ sp\geq Q,
    	\end{cases}
    	$$
    	$\Tail(\cdot)$ is defined in~\eqref{tail}, and~$u_-:=\max\{-u,0\}$ is the negative part of the function~$u$ and~$c \equiv c(n,s,p,\Lambda)>0$.
    \end{thm}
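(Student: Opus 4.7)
The plan is to adapt to the Heisenberg framework the Moser-type strategy developed in the Euclidean case in \cite{DKP14, KKP16}, combining the three fundamental tools already at our disposal: the Caccioppoli inequality with tail (Theorem~\ref{teo_caccioppoli}), the fractional logarithmic lemma (Lemma~\ref{lem_log}), and the supremum estimate (Theorem~\ref{teo_bdd}), together with the Folland-Stein-Sobolev embedding from Theorem~\ref{sobolev}. The tail and nonhomogeneity contributions will be absorbed through a single shift of the form
\[
d \,:=\, \left(\frac{r}{R}\right)^\frac{sp}{p-1}\Tail(u_-;\xi_0,R) \,+\, r^\frac{sp}{p-1}\|f\|_{L^\infty(B_R)}^\frac{1}{p-1},
\]
working throughout with $\tilde u := u + d$, which is still a weak supersolution modulo controlled lower-order perturbations.

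First I would run a Moser iteration on negative powers of $\tilde u$. Testing the supersolution inequality against $\eta^p \tilde u^{1-p-q}$ for $q>0$ and a smooth cut-off $\eta$, and combining the resulting pointwise identity with Theorem~\ref{teo_caccioppoli} applied to $\tilde u^{-\kappa q/p}$, one would obtain a reverse H\"older estimate of the form
\[
\left(\,\dashint_{B_{r_1}} \tilde u^{-\kappa q}\,\right)^{1/\kappa} \,\leq\, \frac{c(q+1)^p}{(r_2-r_1)^{sp}}\,\dashint_{B_{r_2}} \tilde u^{-q}, \qquad B_{r_1}\subset B_{r_2}\subset B_{2r},
\]
where $\kappa=p^*/p>1$ when $sp<Q$. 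Iterating along a sequence of shrinking radii between $3r/2$ and $7r/4$ and letting $q\to\infty$ then yields
\[
\essinf_{B_{3r/2}} \tilde u \,\geq\, c \left(\,\dashint_{B_{7r/4}} \tilde u^{-q_0}\,\right)^{-1/q_0}
\]
for some fixed $q_0>0$.

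Next, I would bridge the negative and the positive powers of $\tilde u$ via the logarithmic lemma. Lemma~\ref{lem_log}, applied to $\tilde u$, shows that $v:=\log \tilde u$ enjoys a BMO-type bound on balls contained in $B_R$. A crossover argument in the spirit of \cite{DKP14}, obtained through a John-Nirenberg inequality valid on the metric measure space $(\He,\dd,\mathcal{L})$, then produces an exponent $\mathfrak{t}_0>0$ and a constant $c$ such that
\[
\left(\,\dashint_{B_{2r}} \tilde u^{\mathfrak{t}_0}\,\right)^{1/\mathfrak{t}_0} \left(\,\dashint_{B_{2r}} \tilde u^{-q_0}\,\right)^{1/q_0} \leq c.
\]
Combining this with the previous step and unravelling $\tilde u=u+d$ yields \eqref{eq_weakharnack} for the specific value $\mathfrak{t}=\mathfrak{t}_0$; pushing $\mathfrak{t}$ up to the claimed threshold is then a matter of a finite forward Moser iteration, whose gain per step is again governed by the Sobolev exponent $\kappa$.

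The main obstacle will be the borderline regime $sp\geq Q$, where the critical exponent $p^*$ is no longer available and Theorem~\ref{sobolev} must be replaced by a version in which the differentiability parameter $s$ is downgraded to some $\varepsilon\in(s-Q/p,s)$ satisfying $\varepsilon p<Q$. This degrades each gain factor in the iteration and forces the admissible range of $\mathfrak{t}$ to depend on $\varepsilon$, which is precisely the origin of the second branch in the definition of $\mathcal{F}$. A secondary technical point, specific to the Heisenberg setting, is that every cut-off construction and every algebraic estimate of the type $|\phi(\xi)-\phi(\eta)|\leq c\,r^{-1}\dd(\eta^{-1}\circ\xi)$ must be carried out relative to the homogeneous norm $\dd$ rather than the Euclidean one, and that the invariance of the bilinear form under left-translations of $\He$ has to be used to reduce all estimates to balls centred at the origin; these issues are handled by the machinery developed in Section~\ref{sec_prel} and already employed in \cite{MPPP23, PP22}.
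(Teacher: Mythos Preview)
Your proposal is correct and follows essentially the same Moser-type strategy that the paper attributes to \cite{PP22,DKP14}: a Caccioppoli estimate for negative powers obtained by testing with $\eta^p\tilde u^{1-p-q}$, the logarithmic lemma combined with a John--Nirenberg/Bombieri--Giusti crossover, a finite forward Moser iteration to reach the full range of $\mathfrak{t}$, and the downgrading $s\mapsto\varepsilon$ to handle the borderline regime $sp\geq Q$. One small imprecision: the reverse H\"older inequality for $\tilde u^{-q}$ comes directly from that test-function computation together with the Sobolev embedding, not from an application of Theorem~\ref{teo_caccioppoli} to $\tilde u^{-\kappa q/p}$, since the latter is not of the form $(u-k)_+$ for a subsolution.
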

  
  \begin{rem}
The nonlocal Harnack inequalities presented above have been stated in the original paper~\cite{PP22} under a not sharp assumption; that is, for {$1<p< 2n/(1-s)$}. One can easily remove such a restriction by bypassing the estimate {\rm (3.1)} there given by~\cite[Lemma~3.2]{AM18}  with the estimate in {\rm Proposition~5.4.4} by Bonfiglioli, Lanconelli and Uguzzoni~\cite{BLU07}. Such a simple (sharp) computation can be found at {\,\rm Page~24} in~\cite{MPPP23}.
  \end{rem}
    In the linear case when~$p=2$ a first version of the Harnack inequality has been proved by Ferrari and Franchi in~\cite{FF15}, via a Dirichlet-to-Neumann map, and for  weak solutions nonnegative in the whole space. In this regard, it is worth noticing that by assuming~$u \ge 0$ in~$\He$ the~$\Tail (\cdot)$-type contribution in both~\eqref{eq_harnack} and~\eqref{eq_weakharnack} disappears.

    \subsubsection{The conformally invariant fractional subLaplacian}
    In the Euclidean framework, the validity of the Harnack inequality without requiring extra positivity assumptions on the solutions has been an open problem in the fractional framework for many years, until Ka$\ss$mann in~\cite{Kas07,Kas11} showed with a simple counterexample that the classical Harnack inequality is not true even for the classical fractional Laplacian~$(-\Delta)^s$ if one cannot add an extra tail-type contribution on the right-hand side of the desired inequality. 
    
    \vspace{2mm}
    We show now that in such linear case when~$p=2$ such a tail term disappers in both the inequalities~\eqref{eq_harnack} and~\eqref{eq_weakharnack} as~$s\to 1$, so that the classic Harnack inequalities are recovered.

    Precisely, we consider the following Dirichlet problem
     \begin{eqnarray}\label{fractional_pbm}
    	\begin{cases}
    		(-\Delta_{\He})^{s} u = 0  & \textrm{in}~\Omega,\\[0.4ex]
    		u = g  & \textrm{in}~\He \smallsetminus\Omega,
    	\end{cases}
    \end{eqnarray}
    where $g \in H^s(\He)\equiv HW^{s,2}(\He)$. We firstly recall the  definition of the conformally invariant fractional subLaplacian operator
   $$
    	(-\Delta_{\He})^{s} u(\xi) = C(n,s)\ P.~\!V.\int_{\He} \frac{u(\xi)-u(\eta)}{|\eta^{-1} \circ \xi |_{\He}^{Q+2s}}\, {\rm d}\eta, \qquad \forall \xi \in \He,
    $$
    where~$C(n,s)$ is given by
     \begin{equation}\label{C(n,s)}
    	C(n,s) = \frac{c_1(n,s)\omega_{2n}}{n c_2(n,s)},
    \end{equation} 
    with
    \begin{equation}\label{c(n,s)}
    	c_1(n,s) = \left(\, \int_{\er^{2n+1}} \frac{1-\cos(x_1)}{\|\eta\|^{1+2(n+s)}}\,{\rm d}\eta\right)^{-1} \quad \text{and} \quad
    	c_2(n,s) = \int_{\partial B_1}\frac{x_1^2}{|\eta|_{\h^n}^{Q+2s}}\,{\rm d}\mathcal{H}_{Q-2}(\eta)\,,
    \end{equation}
  In the display above,~$\|\cdot\|$ is the standard Euclidean norm on~$\er^{2n+1}$, and~$\mathcal{H}_{Q-2}$ is the surface measure on~$\partial B_1$.
    
    \vspace{2mm}
    With such a notation in mind, we can prove that the conformally invariant subLaplacian tends to the classical subLaplacian~$-\Delta_{\He}$ on suitable regular functions.
    
       \begin{prop}[{\bf Asymptotic of the conformally fractional subLaplacian}. See Proposition~1.3 in~\cite{PP22}]\label{limit_of_fraclap} \mbox{}
    	For any $u \in C^\infty_0(\He)$ the following statement holds true
    	$$
    		\lim_{s \rightarrow 1^-}(-\Delta_{\He})^{s} u = -\Delta_{\He} u\,.
    	$$
    \end{prop}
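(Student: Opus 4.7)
The strategy is to adapt the classical Euclidean argument for~$\lim_{s\to 1^-}(-\Delta)^s v = -\Delta v$ pointwise on~$C_0^\infty$, exploiting the Heisenberg-specific features: left-invariance of~$X_1, \dots, X_{2n}, T$, the commutations~$[X_j, T]=0$, the~$U(n)$-symmetry of the Kor\'anyi gauge, and the precise asymptotic of the normalization constant~$C(n,s)$ in~\eqref{C(n,s)}. Firstly, substituting~$\zeta := \xi^{-1}\circ\eta$ in~\eqref{frac_sublap}, left-invariance of the Haar measure together with the symmetry~$|\zeta^{-1}|_{\He}=|\zeta|_{\He}$ rewrite the integral as $C(n,s)\,P.V.\int_{\He}(u(\xi)-u(\xi\circ\zeta))|\zeta|_{\He}^{-(Q+2s)}\,{\rm d}\zeta$; a further change~$\zeta\mapsto\zeta^{-1}$ symmetrizes this to
\begin{equation*}
(-\Delta_{\He})^s u(\xi) \,=\, -\frac{C(n,s)}{2}\int_{\He}\frac{u(\xi\circ\zeta)+u(\xi\circ\zeta^{-1})-2u(\xi)}{|\zeta|_{\He}^{Q+2s}}\,{\rm d}\zeta,
\end{equation*}
in which the P.V. is no longer needed since the numerator is~$O(|\zeta|_{\He}^2)$ near the origin. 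I would then split the integral over~$B_r$ and $\He\smallsetminus B_r$ for some fixed small~$r>0$. Since~$u$ is bounded and compactly supported, the outer contribution is~$O(1)$ uniformly in~$s$ close to~$1$, hence is annihilated by~$C(n,s)\to 0$ as~$s\to 1^-$ (itself a consequence of the pole~$\Gamma(1-s)\sim(1-s)^{-1}$ inherited by~$c_1(n,s)$ from the Euclidean normalization of~$(-\Delta)^s$ on~$\er^{2n+1}$, while~$c_2(n,s)$ stays bounded).

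On~$B_r$ I would Taylor-expand: writing~$\zeta=(z,t)\in\er^{2n}\times\er$ and~$V_\zeta := \sum_{j=1}^{2n} z_j X_j+tT$, the left-invariance of the vector fields and a standard second-order Taylor expansion yield
\begin{equation*}
u(\xi\circ\zeta)+u(\xi\circ\zeta^{-1})-2u(\xi) \,=\, V_\zeta^{\,2}u(\xi) + R(\xi,\zeta), \qquad |R(\xi,\zeta)|\le c\,\|u\|_{C^4}|\zeta|_{\He}^{4},
\end{equation*}
where the remainder is integrable against~$|\zeta|_{\He}^{-(Q+2s)}$ on~$B_r$ and gives a contribution bounded in~$s$, again killed by~$C(n,s)$. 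Expanding~$V_\zeta^{\,2} u$ and using~$[X_j,T]=0$ produces cross terms~$z_j z_k X_j X_k u$ (for~$j\ne k$) and~$2tz_j\,TX_j u$, which integrate to zero by oddness as~$B_r$ and the kernel are even under each reflection~$z_j\mapsto -z_j$ and~$t\mapsto -t$; the vertical term~$t^2 T^2 u$ has subcritical singular scaling (weight~$4-2s$) and yields a bounded factor, once more wiped out by~$C(n,s)\to 0$. The~$U(n)$-symmetry of the Kor\'anyi gauge reduces the surviving diagonal piece to
\begin{equation*}
\sum_{j=1}^{2n} X_j^2 u(\xi)\int_{B_r} \frac{z_j^2}{|\zeta|_{\He}^{Q+2s}}\,{\rm d}\zeta \,=\, \frac{1}{2n}\Delta_{\He} u(\xi)\int_{B_r}\frac{|z|^2}{|\zeta|_{\He}^{Q+2s}}\,{\rm d}\zeta.
\end{equation*}

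Passing to Kor\'anyi polar coordinates~$\zeta=\delta_\rho\sigma$ with~$|\sigma|_{\He}=1$ yields
\begin{equation*}
\int_{B_r}\frac{|z|^2}{|\zeta|_{\He}^{Q+2s}}\,{\rm d}\zeta \,=\, \frac{r^{2(1-s)}}{2(1-s)}\int_{\partial B_1}|\sigma'|^2\,{\rm d}\mathcal{H}_{Q-2}(\sigma),
\end{equation*}
whose singular factor~$1/(1-s)$ exactly compensates the~$(1-s)$-vanishing of~$C(n,s)$ in the limit. The main technical obstacle is this final identification of the constant: one has to verify that the asymptotic~$c_1(n,s)\sim c\,(1-s)$ as~$s\to 1^-$ (extracted via~$\Gamma(1-s)\sim 1/(1-s)$ from the representation $c_1(n,s)^{-1}=\int_{\er^{2n+1}}(1-\cos x_1)\|\eta\|^{-(2n+1+2s)}\,{\rm d}\eta$) combines with the surface constant~$c_2(n,s)$ and the prefactors~$\omega_{2n}/n$ in~\eqref{C(n,s)} to produce exactly the coefficient~$-1$ of~$\Delta_{\He}u(\xi)$; this delicate cancellation is precisely what the normalization in~\eqref{C(n,s)}--\eqref{c(n,s)} was engineered to achieve.
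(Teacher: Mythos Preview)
Your proof is correct and follows essentially the same route as the paper: symmetrize the kernel, split near/far, Taylor-expand on the inner ball, kill the off-diagonal contributions by parity, compute the surviving diagonal piece in Kor\'anyi polar coordinates, and identify the limiting constant via the known asymptotic~$c_1(n,s)/(1-s)\to 4n/\omega_{2n}$ from~\cite[Corollary~4.2]{DPV12}. The only variation is in the Taylor step: the paper invokes the stratified second-order Taylor formula of~\cite[Corollary~20.3.5]{BLU07}, which gives directly $u(\xi\circ\eta)+u(\xi\circ\eta^{-1})-2u(\xi)=\langle x, D^{2,*}_{\He}u(\xi)\cdot x\rangle + O(|\eta|_{\He}^3)$ with no $t^2T^2u$ or $tz_jX_jTu$ terms ever appearing, whereas your Euclidean Taylor expansion of $\zeta\mapsto u(\xi\circ\zeta)$ produces those extra pieces that you then dispose of by oddness and subcritical scaling --- a slightly more hands-on bookkeeping of the same computation (just note that your remainder bound~$|R|\le c|\zeta|_{\He}^4$ tacitly uses the inequality~$|\zeta|_{Eucl}^4\le C_r|\zeta|_{\He}^4$ on~$B_r$, which holds but is worth making explicit).
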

  
   \begin{proof}
    For the sake of readability, we denote the points~$\xi$ in $\He$ as follows,
    $$
    \xi := (x_1,\dots,x_{2n},t)\,,
    $$
and we rely on the weighted second order integral definition of the fractional sublaplacian; i.~\!e.,
    $$
    (-\Delta_{\h^n})^s u (\xi) \, =\, -\frac{1}{2}C(n,s) \int_{\h^n}\frac{u(\xi \circ \eta)+ u(\xi \circ \eta^{-1} )-2u(\xi)}{|\eta|_{\h^n}^{Q+2s}}\,{\rm d}\eta, \quad \forall \xi \in \h^n\,;
    $$
    see, e.~\!g.,~\cite[Proposition~3.2]{DPV12}.
    We also recall that the symmetrized horizontal Hessian matrix~$D^{2,*}_{\h^n}$ is given by
    \[
    	D^{2,*}_{\h^n} u(\xi) := \left(\frac{1}{2}\big(X_iX_ju(\xi)+X_jX_iu(\xi)\big)\right)_{i,j =1,\dots,2n}\,.
    \]

   Firstly, we have no contribution outside the unit ball in the limit as $s$ goes to $1^-$. Indeed,
    \[
    	\lim_{s \rightarrow 1^-}-\frac{C(n,s)}{2}\int_{\h^n \smallsetminus B_1(0)}\frac{u(\xi \circ \eta)+ u(\xi \circ \eta^{-1} )-2u(\xi)}{|\eta|_{\h^n}^{Q+2s}} \, {\rm d}\eta\,=\,0.
    \]
    It thus remains to estimate the integral contribution in the unit ball. In view of~\cite[Corollary~20.3.5]{BLU07}, for any $\eta =(x,t)$, one get
    \begin{equation}\label{ss3_e4}
    	u(\xi \circ \eta^{-1})= P_2(u,\xi)(\xi \circ \eta^{-1}) + {\text{o}}(|\eta|_{\h^n}^3) \quad \text{as} \ |\eta|_{\h^n}\to 0\,,
    \end{equation}
    where $P_2(u,\xi)$ is the Taylor polynomial of $\h^n$-degree~2 associated to~$u$ and centered at~$\xi$; see~\cite{BLU07}.

    By the very definition of Taylor polynomial, inequality~\eqref{ss3_e4} yields
    \[
    	u(\xi \circ \eta^{-1}) =  u(\xi)- \big(\nabla_{\h^n}u(\xi),\,\partial_t u(\xi)\big)\cdot \eta + \frac{1}{2}\langle x,D^{2,*}_{\h^n}u(\xi) \cdot x\rangle + {\text{o}}(|\eta|_{\h^n}^3)\quad \text{as} \ |\eta|_{\h^n}\to 0\,,
    \]
    which, by using again the result in~\cite{BLU07}, can be used to control the contribution inside the ball~$B_1$ as follows,
    \begin{eqnarray*}
    	&& \left|\quad \int_{B_1}\frac{u(\xi \circ \eta)+ u(\xi \circ \eta^{-1} )-2u(\xi)-\langle x ,D^{2,*}_{\h^n}u(\xi) \cdot x \rangle}{|\eta|_{\h^n}^{Q+2s}} \, {\rm d}\eta \quad \right|\notag\\*[0.5ex]
    	&& \qquad\qquad\qquad\qquad\qquad\quad \leq \int_{B_1}\frac{|u(\xi \circ \eta) -P_2(u(\xi \circ \cdot),0)(\eta)|+{\text{o}}(|\eta|_{\h^n}^3)}{|\eta|_{\h^n}^{Q+2s}} \, {\rm d}\eta\notag\\*[0.5ex] 
    	&& \qquad\qquad\qquad\qquad\qquad\quad \leq \int_{B_1}\frac{1}{|\eta|_{\h^n}^{Q-1}}\  =:\  c(n).
    \end{eqnarray*}
    The previous  estimate yields
    \begin{eqnarray}\label{ss3_e5}
    	&& \lim_{s \rightarrow 1^-}-\frac{C(n,s)}{2}\int_{B_1}\frac{u(\xi \circ \eta)+ u(\xi \circ \eta^{-1} )-2u(\xi)}{|\eta|_{\h^n}^{Q+2s}} \, {\rm d}\eta \notag\\*
    	&& \qquad  = \lim_{s \rightarrow 1^-} -\frac{C(n,s)}{2}\int_{B_1}\frac{\langle x,D^{2,*}_{\h^n}u(\xi) \cdot x \rangle }{|\eta|_{\h^n}^{Q+2s}}\,{\rm d}\eta.
    \end{eqnarray}
    
    Now, notice that 
    \begin{equation}\label{ss3 _e6}
    	\int_{B_1}\left(\frac{1}{2}(X_iX_ju(\xi)+X_jX_iu(\xi))\right) x_i \,  \cdot \, x_j {\rm d}\eta =0, \qquad \text{for}\ i \neq j.
    \end{equation}
    Moreover, for any fixed index~$i$, making using of the polar coordinates, namely~\cite[Proposition~1.15]{FS82}, we get that there exists a unique Borel measure~$\sigma$ on $B_1$ such that, up to permutations,
    \begin{eqnarray*} 
    	\int_{B_1}\frac{X_i^2 u(\xi) x_i^2}{|\eta|_{\h^n}^{Q+2s}}\, {\rm d}\eta & = & X_i^2 u(\xi) \int_{0}^{1}\int_{\partial B_1}\frac{x_1^2r^{Q+1}}{|{\Phi}_{r}(\eta)|_{\h^n}^{Q+2s}}\, {\rm d}\sigma(\eta)\,{\rm d}r\notag\\*[0.8ex] 
    	& =& \frac{c_2(n,s)}{2(1-s)}X_i^2u(\xi).
    \end{eqnarray*}
    where~$c_2(n,s)$ is defined in~\eqref{c(n,s)}.
    \vspace{1mm}
    
    Hence, recalling the result in~\cite[Corollary~4.2]{DPV12} which shows that
    \begin{equation}\label{limit_c(n,s)}
    	\lim_{s \rightarrow 1^-}\frac{c_1(n,s)}{s(1-s)}= \frac{4n}{\omega_{2n}},
    \end{equation}
    where~$\omega_{2n}$ denotes the~$(2n)$-dimensional Lebesgue measure of the unit sphere~$\mathds{S}^{2n}$, we finally obtain that
    \begin{eqnarray*}
    	\lim_{s \rightarrow 1^-}(-\Delta_{\h^n})^s u(\xi)  & = &\lim_{s \rightarrow 1^-} -\frac{C(n,s)}{2}\int_{B_1} \frac{\langle x,D^{2,*}_{\h^n}u(\xi)  \cdot x\rangle }{|\eta|_{\h^n}^{Q+2s}}\,{\rm d}\eta\\*
    	& =& \lim_{s \rightarrow 1^-} -\frac{c_1(n,s)\omega_{2n}}{4n(1-s)} \sum_{i=1}^{2n}X_i^2 u(\xi) \ \equiv \  -\Delta_{\h^n}u(\xi)\,,
    \end{eqnarray*}
    as desired.
\end{proof}

\begin{rem}   
   A proof of the result above for fractional subLaplacians on Carnot groups can be also achieved via  heat kernel characterization;
see the relevant paper~\cite{FMPPS18}.
   \end{rem}
   
    \vspace{2mm}
    
    With Proposition~\ref{limit_of_fraclap} in mind, we are now able to revisit the proofs of Theorems~\ref{thm_harnack}-\ref{thm_weak} by precisely  tracking the dependence of the differentiability exponent~$s$ in all the estimates, so that we are eventually able to obtain the results below in clear accordance with the analogous ones in the Euclidean framework~\cite{Kas11}, by proving that the nonlocal tail term will vanish when $s$~goes to~$1$, in turn recovering the classical Harnack inequality formulation.
    \begin{thm}[{\bf Robustness of the nonlocal Harnack inequality}. See Theorems~1.4 and 1.5 in~\cite{PP22}] 
        	For any~$s \in (0,1)$, let~$u \in H^s(\He)$ be a weak solution to~\eqref{fractional_pbm} such that~$u \geq 0$ in $ B_R(\xi_0) \subset \Omega$. Then, the following estimate holds true for any~$B_{r}$ such that~$B_{6r} \subset B_R$,
    	\begin{equation*}
    		\esssup_{B_{r}}u \leq c\, \essinf_{B_{r}}u + c\, (1-s)\left(\frac{r}{R}\right)^{2s}\Tail(u_-;\xi_0,R).
    	\end{equation*}
        If~ $u \in H^s(\He)$ is a weak supersolution to~\eqref{fractional_pbm}, such that~$u \geq 0$ in~$B_R(\xi_0) \subseteq \Omega$, then,  for any~$B_r$ such that~$B_{6r} \subset B_R$, and any~$\mathfrak{t}<Q/(Q-2s)$,
    	\begin{equation*}
    		\left( \, \dashint_{B_{r}}  u^\mathfrak{t}{\rm \, d}\xi\right)^\frac{1}{\mathfrak{t}} \leq  c\,\essinf_{B_{\frac{3r}{2}}} u +c \,(1-s)\left(\frac{r}{R}\right)^{2s} \Tail(u_-;\xi_0,R),
    	\end{equation*}
    	where~$\Tail(\cdot)$ is defined in~\eqref{tail} by taking~$p=2$ there,~$u_-:=\max\{-u,0\}$ is the negative part of the function~$u$, and~$c\equiv c\,(n,s)$.
    \end{thm}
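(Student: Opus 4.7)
The plan is to revisit the proofs of Theorems~\ref{thm_harnack} and~\ref{thm_weak} in the linear case $p=2$, applied now to the conformally invariant operator $(-\Delta_{\He})^{s}$, by keeping careful track of the dependence on $s$ of every constant. The key analytic ingredient will be the asymptotic identity~\eqref{limit_c(n,s)}, which combined with~\eqref{C(n,s)} yields
\begin{equation*}
C(n,s) \,=\, \frac{c_1(n,s)\,\omega_{2n}}{n\,c_2(n,s)} \,\sim\, (1-s)\qquad\text{as } s\to 1^-,
\end{equation*}
with constants bounded uniformly in $s$ near~$1$ (the denominator $c_2(n,s)$ is continuous and nonzero at $s=1$). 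Hence, in the weak formulation of $(-\Delta_{\He})^{s}u=0$, every term in the bilinear form carries an explicit factor $C(n,s)$ that is comparable to $(1-s)$.

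First, I would re-examine the Caccioppoli inequality of Theorem~\ref{teo_caccioppoli} for $p=2$. The two contributions — the Gagliardo-type local part and the tail part arising from $\int_{\He\setminus B_r}$ — both pick up the multiplicative constant $C(n,s)$. After dividing through by the Gagliardo part (which is estimated from below by a constant which remains bounded as $s\to 1^-$, using~\cite[Proposition~5.4.4]{BLU07} as in the Remark), the tail piece survives with a factor proportional to $C(n,s)\sim (1-s)$. An analogous bookkeeping applies to the logarithmic Lemma~\ref{lem_log}: the inequality is obtained testing against $\phi^2/(u+d)$, and the long-range kernel contribution is again weighted by $C(n,s)$.

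Next, I would rerun the two iterative schemes that yield Theorems~\ref{thm_harnack} and~\ref{thm_weak}: the De Giorgi/Moser iteration producing the local $L^\infty$-bound of Theorem~\ref{teo_bdd}, and the expansion-of-positivity step that compares $\essinf$ to~$\essinf$ on successively smaller balls. At each iteration one must verify that the Folland–Stein–Sobolev constant from Theorem~\ref{sobolev}, as well as the Poincar\'e constant, can be taken \emph{uniform in} $s\in(s_0,1)$; this follows since both constants are continuous in $s$ and the Folland–Stein inequality remains nondegenerate at $s=1$. Once every $s$-dependent factor \emph{other than} the normalization $C(n,s)$ attached to the tail has been shown to stay bounded, the two Harnack estimates in~\eqref{eq_harnack} and~\eqref{eq_weakharnack} specialize to exactly the robust form stated, with the factor $(1-s)$ appearing in front of $\Tail(u_-;\xi_0,R)$.

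The main obstacle is the careful constant-tracking itself: one must ensure that no hidden $s$-dependence enters through, for instance, the exponent of $\sigma$ in the induction for the oscillation decay (cf.~the proof of Theorem~\ref{teo_holder}) or through the $L^p$-$L^\infty$ trade-off in the Moser step. Concretely, the inequality $\sigma^\alpha \ge 1-d$ must be verified for an $\alpha$ that can be chosen uniformly in $s$ on compact subsets of $(0,1)$, and the fractional Log-Lemma estimate must produce a constant whose $s$-dependence is harmless in the limit $s\to 1^-$. Apart from this, no new ideas are needed: once all the $s$-dependent constants are made explicit, one simply collects the factor $C(n,s)$ left over from the tail kernel and uses~\eqref{limit_c(n,s)} to rewrite it as $c\,(1-s)$, obtaining both robust inequalities and, letting $s\to 1^-$, recovering the classical Harnack inequality for $-\Delta_{\He}$ in accordance with Proposition~\ref{limit_of_fraclap}.
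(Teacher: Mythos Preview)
Your proposal is correct and follows essentially the same route as the paper, which does not spell out a proof here but only indicates that one should ``revisit the proofs of Theorems~\ref{thm_harnack}--\ref{thm_weak} by precisely tracking the dependence of the differentiability exponent~$s$ in all the estimates'' with Proposition~\ref{limit_of_fraclap} (and in particular~\eqref{limit_c(n,s)}) in mind. Your identification of $C(n,s)\sim(1-s)$ as the source of the factor in front of the tail, together with the need to check that the Sobolev and Poincar\'e constants remain uniformly bounded for $s$ near~$1$, is exactly the content of that tracking.
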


\vspace{2mm}
    \section{The subelliptic obstacle problem}\label{sec_obs}
    Armed with all the estimates presented in Sections~\ref{sec_hold}, we are in the position to investigate {\it the  obstacle problem} associated to the operators~$\Lc$ in order to extend (up to the boundary) both the boundedness and the H\"older continuity stated in Theorems~\ref{teo_bdd} and~\ref{teo_holder}.
    For other results regarding the obstacle problem in subRiemannian geometry we refer to~\cite{DGP07,DGS03} and the references given there.
    
    \vspace{2mm}
    Let~$\Omega \Subset \Omega'$ be bounded open sets in $ \He$; let~$h : \He \rightarrow [-\infty,\infty)$ be an extended real-valued function, called the {\it obstacle}; and let~$g \in HW^{s,p}(\Omega')\cap L^{p-1}_{sp}(\He)$ be the nonlocal boundary datum. Define the following class of fractional functions, 
    \begin{equation*} 
    	\mathcal{K}_{g,h}(\Omega,\Omega') := \big\{ u \in HW^{s,p}(\Omega'): u \geq h \mbox{ a.~\!e. in } \Omega, \, u=g \mbox{ a.~\!e. in } \He \smallsetminus\Omega\big\}.
     \end{equation*}
   Note that in the case when~$h \equiv -\infty$, we have that the  class defined above, $\mathcal{K}_{g,h}(\Omega,\Omega')$, does coincide with
   $$
   \mathcal{K}_{g,-\infty}(\Omega,\Omega')= \bigl\{ u \in HW^{s,p}(\Omega'): u=g ~\mbox{a.~\!e. in}~\He \smallsetminus\Omega\big\},
   $$
   which is the class  
    in which we seek the solution to~\eqref{pbm} with zero datum in the right-hand side of the equation; see Definition~\ref{solution to inhomo pbm} above, and forthcoming Section~\ref{sec_laplace}.
  
   On the class~$\mathcal{K}_{g,h}(\Omega,\Omega')$ we now define the functional
   $$
   \mathcal{A}: \mathcal{K}_{g,h}(\Omega,\Omega') \rightarrow [HW^{s,p}(\Omega')]',
   $$
    given by
    \begin{eqnarray*}
    	&& \langle\mathcal{A}(u),v\rangle\\*
    	&&\quad :=  \int_{\He}\int_{\He}\frac{|u(\xi)-u(\eta)|^{p-2}(u(\xi)-u(\eta)) (v(\xi)-v(\eta))}{d_{\rm o}(\eta^{-1}\circ \xi)^{Q+sp}} \,{\rm d}\xi{\rm d}\eta  - \int_{\Omega} f(\xi,u)v(\xi) \,{\rm d}\xi,
    \end{eqnarray*}
     where the function~$f:\He \times \er \rightarrow \er$ satisfies the following assumptions:
    \begin{enumerate} 
    	 \item[($F_1$)]   $   f (\cdot,\tau)$ is measurable for all~$\tau \in \er$ and~$f(\xi,\cdot)$ is continuous for a.~\!e.~$\xi \in \He$; 
    	 \item[($F_2$)]   $	f \equiv f(\cdot,\tau) \in L^\infty_{\rm loc}(\He)$, for any~$\tau\in \er$, uniformly in~$\Omega$;
    	 \item[($F_3$)]   $\left(f(\xi,\tau_1)-f(\xi,\tau_2)\right)(\tau_1-\tau_2) \leq 0$, for any~$ \tau_1,\tau_2 \in \er$ and for a.~\!e.~$\xi \in \He$. 
    \end{enumerate}
    We are now ready to recall the definition of solutions to the obstacle problem.
    
    \begin{defn}[{\bf Solution to the obstacle problem}. Definition~2.8 in~\cite{Pic22}]
     We say that~$u \in \mathcal{K}_{g,h}(\Omega,\Omega')$ is a {\it solution to the obstacle problem in~$\mathcal{K}_{g,h}(\Omega,\Omega')$} if
    $$
    \langle\mathcal{A}(u),v-u \rangle \geq 0, \qquad \forall v \in \mathcal{K}_{g,h}(\Omega,\Omega').
    $$
    \end{defn}
   In the theorem below, we immediately state both existence and uniqueness of the solution to the obstacle problem.  
    \begin{thm}[{\bf Existence and uniqueness}. Theorem~1.1 in~\cite{Pic22}] 
    	Under conditions~$(F_1)$, $(F_2)$ and~$(F_3)$, if the class~$\mathcal{K}_{g,h}(\Omega,\Omega')$ is not empty, then there exists a unique solution to the obstacle problem in~$\mathcal{K}_{g,h}(\Omega,\Omega')$.
    \end{thm}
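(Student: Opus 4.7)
My plan is to recognize the obstacle problem as an abstract variational inequality driven by a monotone operator on a closed convex subset of a reflexive Banach space, and then invoke the classical Lions--Stampacchia existence theory. First I would work in the affine space $g + HW^{s,p}_0(\Omega) \subset HW^{s,p}(\Omega')$; in this framework $\mathcal{K}_{g,h}(\Omega, \Omega')$ is nonempty by hypothesis, convex because both constraints $u \geq h$ in $\Omega$ and $u = g$ outside $\Omega$ are preserved by convex combinations, and closed because a.e.\ convergence along subsequences preserves pointwise inequalities. In this way, differences of elements of $\mathcal{K}_{g,h}$ lie in $HW^{s,p}_0(\Omega)$, on which the fractional Poincar\'e-type inequality recalled in Section~\ref{sec_prel} is available.

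The next step would be to verify that the operator $\mathcal{A}$ is monotone, coercive, and hemicontinuous on $\mathcal{K}_{g,h}$. Monotonicity is the sum of two nonnegative contributions: for the nonlocal part, the elementary inequality $(|a|^{p-2}a-|b|^{p-2}b)(a-b) \geq 0$ applied pointwise to the integrand yields a nonnegative symmetric form, while for the lower-order part the hypothesis~$(F_3)$ reads as $-\int_\Omega (f(\xi,u)-f(\xi,v))(u-v)\,{\rm d}\xi \geq 0$. Coercivity would be obtained by fixing a reference $v_0 \in \mathcal{K}_{g,h}$, writing $u = v_0 + w$ with $w \in HW^{s,p}_0(\Omega)$, and using the standard strict monotonicity of the fractional $p$-Laplacian (quantified via the elementary inequalities for $p \geq 2$, and through a suitable interpolation for $1 < p < 2$) to get $\langle \mathcal{A}(u) - \mathcal{A}(v_0),\, w \rangle \gtrsim [w]_{HW^{s,p}(\He)}^p$, with the $f$-term treated as lower order thanks to $(F_2)$. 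Hemicontinuity, i.e.\ continuity of $t \mapsto \langle \mathcal{A}(u+tv),\,v\rangle$, would follow by dominated convergence, using the continuity of $f(\xi,\cdot)$ furnished by $(F_1)$ and the pointwise continuity of the kernel.

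With these three properties in hand, the Browder--Lions theorem on variational inequalities yields the existence of some $u \in \mathcal{K}_{g,h}$ with $\langle \mathcal{A}(u), v - u\rangle \geq 0$ for every $v \in \mathcal{K}_{g,h}$. Uniqueness follows from strict monotonicity: if $u_1, u_2$ are two solutions, choosing $v = u_2$ in the inequality for $u_1$ and $v = u_1$ in the one for $u_2$ and summing gives $\langle \mathcal{A}(u_1) - \mathcal{A}(u_2),\, u_1 - u_2\rangle \leq 0$; the nonlocal part is bounded below by $c\,[u_1 - u_2]_{HW^{s,p}(\He)}^p$ (for $p \geq 2$, with the usual modification for $1 < p < 2$), and $(F_3)$ guarantees that the $f$-contribution is nonnegative, forcing $u_1 = u_2$ a.e. The main technical obstacle I anticipate is the careful verification of coercivity in the presence of the global tail: one must ensure that the tail contribution of $g$, controlled by $g \in L^{p-1}_{sp}(\He)$, does not destroy the lower bound, which is exactly where the tail estimate recalled in Section~\ref{sec_prel} becomes essential.
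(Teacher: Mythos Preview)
The present paper is a survey and does not include a proof of this statement; it simply refers to Theorem~1.1 of~\cite{Pic22}, which in turn follows the Euclidean scheme of~\cite{KKP16}. Your plan---closed convex set in a reflexive space, monotone, coercive, hemicontinuous operator, then Browder/Lions--Stampacchia---is exactly that scheme, so there is no divergence to report.

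Two small corrections to your sketch. For uniqueness you do not need the quantitative lower bound~$c\,[u_1-u_2]^p_{HW^{s,p}}$, which indeed fails in that form when~$1<p<2$: strict monotonicity of~$a\mapsto|a|^{p-2}a$ combined with~$(F_3)$ already forces both nonnegative summands in~$\langle\mathcal{A}(u_1)-\mathcal{A}(u_2),\,u_1-u_2\rangle$ to vanish separately, hence~$u_1-u_2$ is a.e.\ constant on~$\He$, and the boundary condition~$u_1=u_2=g$ outside~$\Omega$ finishes. Second, the lemma you call ``the tail estimate in Section~\ref{sec_prel}'' is a statement about \emph{supersolutions} and plays no role here; what you actually need for~$\mathcal{A}$ to be well defined on~$\mathcal{K}_{g,h}$ and for coercivity is simply the standing hypothesis~$g\in L^{p-1}_{sp}(\He)$, which controls the cross integrals over~$\Omega\times(\He\smallsetminus\Omega')$ directly, together with the fact that competitors differ by an element of~$HW^{s,p}_0(\Omega)$.
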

    Also, in clear accordance with the results in the local framework, one can prove that the solutions to the obstacle problem are indeed weak solutions to~\eqref{pbm} away from the contact set. 
    We have the following
    \begin{corol}[{\bf Solutions to the obstacle problem}. See Corollary 1.2 in~\cite{Pic22}]\label{properties_obst_sol}
    	Let~$u$ be the solution to the obstacle problem in~$\mathcal{K}_{g,h}(\Omega,\Omega')$. Then~$u$ is a weak supersolution to~\eqref{pbm} in~$\Omega$.
    	Moreover, if~$B_r(\xi_0)\subset \Omega$ is such that
    	$
      \essinf_{B_r(\xi_0)}(u-h) >0,
    	$
    	then~$u$ is a weak solution to~\eqref{pbm} in~$B_r(\xi_0)$. In particular, if~$u$ is lower semicontinuous and~$h$ is upper semicontinuous in~$\Omega$, then~$u$ is a weak solution to~\eqref{pbm} in~$\{u >h\} \cap \Omega$.
    \end{corol}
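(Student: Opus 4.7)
The plan is to exploit the variational inequality $\langle \mathcal{A}(u),v-u\rangle \ge 0$ by constructing suitable admissible competitors~$v \in \mathcal{K}_{g,h}(\Omega,\Omega')$ out of test functions~$\psi \in HW^{s,p}_0(\Omega)$ (or supported in a smaller ball), verifying the obstacle and boundary constraints in each case.

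\textbf{Step 1 (supersolution in~$\Omega$).} Given any nonnegative $\psi \in HW^{s,p}_0(\Omega)$, I would simply pick~$v := u + \psi$. Since~$\psi \ge 0$ a.~\!e. in~$\Omega$ and $u \ge h$ a.~\!e. in~$\Omega$, one has $v \ge u \ge h$ in~$\Omega$; moreover~$\psi = 0$ a.~\!e. in $\He \smallsetminus \Omega$, so~$v = g$ there. Hence $v \in \mathcal{K}_{g,h}(\Omega,\Omega')$, and the obstacle inequality rewrites as $\langle \mathcal{A}(u),\psi\rangle \ge 0$, which is precisely the weak supersolution condition from Definition~\ref{solution to inhomo pbm}.

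\textbf{Step 2 (solution on balls with positive gap).} Now assume~$B_r(\xi_0) \subset \Omega$ with $\delta := \essinf_{B_r(\xi_0)}(u-h) > 0$. It remains to verify the subsolution inequality on~$B_r(\xi_0)$. For any nonnegative $\psi \in HW^{s,p}_0(B_r(\xi_0)) \subset HW^{s,p}_0(\Omega)$, choose $0 < \eps < \delta/\|\psi\|_{L^\infty}$ (after a standard truncation/approximation one may assume $\psi$ bounded, or argue by density), so that $\eps\psi \le \delta$ a.~\!e. on~$B_r(\xi_0)$. Set $v := u - \eps\psi$. Then on $B_r(\xi_0)$ we have $v \ge u - \delta \ge h$; on $\Omega \smallsetminus B_r(\xi_0)$ we have $v = u \ge h$; and on $\He \smallsetminus \Omega$ we have $v = u = g$, so $v \in \mathcal{K}_{g,h}(\Omega,\Omega')$. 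Plugging this~$v$ into the variational inequality yields $-\eps\langle \mathcal{A}(u),\psi\rangle \ge 0$, i.~\!e. $\langle \mathcal{A}(u),\psi\rangle \le 0$, which combined with Step~1 shows that~$u$ is a weak solution to~\eqref{pbm} in~$B_r(\xi_0)$.

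\textbf{Step 3 (semicontinuity upgrade).} Finally, if~$u$ is lower semicontinuous and~$h$ is upper semicontinuous on~$\Omega$, pick any~$\xi_0 \in \{u>h\}\cap \Omega$ and set $\alpha := \tfrac{1}{3}(u(\xi_0)-h(\xi_0))>0$. By LSC of~$u$ and USC of~$h$, there exists~$r>0$ with $\overline{B_r(\xi_0)}\subset \Omega$ such that $u(\xi) > u(\xi_0)-\alpha$ and $h(\xi) < h(\xi_0)+\alpha$ for every~$\xi \in B_r(\xi_0)$; hence $\essinf_{B_r(\xi_0)}(u-h) \ge \alpha > 0$, and Step~2 applies on~$B_r(\xi_0)$. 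Since~$\xi_0$ was arbitrary, $u$ is a weak solution to~\eqref{pbm} on the whole open set $\{u>h\}\cap \Omega$.

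The only genuinely delicate point I foresee is verifying that~$v = u - \eps\psi$ is truly admissible in~Step~2 (the bound $\eps\psi\le \delta$ must hold a.~\!e., not merely pointwise where~$\psi$ is continuous); this is handled either by taking~$\psi \in L^\infty\cap HW^{s,p}_0$ first and then approximating general nonnegative test functions by truncations, or equivalently by using $v := u - \min\{\eps\psi,\delta\}$ and exploiting the lattice structure of $HW^{s,p}_0(B_r(\xi_0))$.
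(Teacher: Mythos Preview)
Your argument is correct and is exactly the standard one for this type of result: competitors $v=u+\psi$ (nonnegative~$\psi$) for the supersolution inequality, $v=u-\eps\psi$ with~$\eps$ small enough against the gap~$\delta$ for the subsolution inequality on the ball, and a semicontinuity/covering argument for the last claim. The paper itself does not prove this corollary---it only states it with a reference to~\cite{Pic22}---but the proof there follows precisely the scheme you outline, so there is nothing to compare beyond noting that your approach matches the expected one.

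One minor remark on the point you flagged in Step~2: the cleanest way to avoid the $L^\infty$-assumption on~$\psi$ is indeed the density route. Since $C^\infty_0(B_r(\xi_0))$ is dense in~$HW^{s,p}_0(B_r(\xi_0))$ and the map $\psi\mapsto \langle \mathcal{A}(u),\psi\rangle$ is continuous on~$HW^{s,p}_0$, it suffices to verify the inequality for smooth compactly supported nonnegative~$\psi$, where your choice of~$\eps$ is immediate. The alternative via $\min\{\eps\psi,\delta\}$ also works but requires checking that the truncation stays in~$HW^{s,p}_0(B_r(\xi_0))$, which is true but adds a line.
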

Amongst other results, one can prove that the solutions to the obstacle problem inherit regularity properties, such as boundedness, continuity and H\"older continuity
(up to the boundary), from the obstacle.
       \begin{thm}[{\bf Boundedness for the obstacle problem}. Theorem~3.6 in~\cite{Pic22}]\label{bound_bdd}
       Let~$ s \in (0,1)$ and~$p \in (1,\infty)$. 
	    	Suppose that, under assumptions~$(F_1)$,~$(F_2)$ and~$(F_3)$,~$u \in \mathcal{K}_{g,h}(\Omega,\Omega')$ solves the obstacle problem in~$\mathcal{K}_{g,h}(\Omega,\Omega')$. Let~$\xi_0 \in \partial \Omega$ and~$r \in (0,{\rm dist}(\xi_0,\partial \Omega)$, and assume that, for~$ B_r(\xi_0)$, 
    	$$
    	\esssup_{B_r(\xi_0)}g + \esssup_{B_r(\xi_0) \cap \Omega} h <\infty \quad \mbox{and} \quad \essinf_{B_r(\xi_0)} g > -\infty.
    	$$
    	Then,~$u$ is essentially bounded close to~$\xi_0$.
    \end{thm}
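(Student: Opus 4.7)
The plan is to prove separately an upper and a lower bound for $u$ on $B_{r/2}(\xi_0)$, in each case by testing the defining variational inequality $\langle\mathcal{A}(u),v-u\rangle\geq 0$ against an admissible truncation of $u$ and then iterating in the spirit of the interior estimate~\eqref{eq_bdd}. The dichotomy between the obstacle constraint $u\geq h$ inside $\Omega$ and the Dirichlet constraint $u=g$ on $\He\smallsetminus\Omega$ forces the truncation thresholds to be chosen in terms of both $g$ and $h$, and this is precisely what the two-sided hypotheses of the statement provide. The lower bound will additionally exploit that, by Corollary~\ref{properties_obst_sol}, $u$ is a weak supersolution to~\eqref{pbm} throughout~$\Omega$.

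\medskip

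\textbf{Upper bound.} Set $M:=\max\{\esssup_{B_r(\xi_0)}g,\,\esssup_{B_r(\xi_0)\cap\Omega}h\}<+\infty$. For any $k\geq M$ and any cutoff $\phi\in C^\infty_0(B_r(\xi_0))$ with $0\leq\phi\leq 1$, the competitor
\[
v:=u-\phi\,(u-k)_+
\]
belongs to the class $\mathcal{K}_{g,h}(\Omega,\Omega')$: outside $B_r(\xi_0)$ it equals $u$; inside $B_r(\xi_0)\smallsetminus\Omega$ one has $u=g\leq k$, so $(u-k)_+=0$ and $v=g$ as required; inside $B_r(\xi_0)\cap\Omega$ it satisfies $v\geq\min\{u,k\}\geq\min\{h,k\}=h$. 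Plugging $v$ into $\langle\mathcal{A}(u),v-u\rangle\geq 0$ and rearranging exactly as in the proof of the Caccioppoli inequality of Theorem~\ref{teo_caccioppoli}, one obtains a Caccioppoli-type estimate for the truncated function $(u-k)_+\phi$. The tail contribution $\Tl((u-k)_+;\xi_0,r)$ is finite thanks to $g\in L^{p-1}_{sp}(\He)$ and $\esssup_{B_r(\xi_0)}g<\infty$; moreover, its near-boundary part in $B_r(\xi_0)\smallsetminus\Omega$ vanishes by the previous admissibility check. A De~Giorgi iteration along levels $k_j\nearrow k_\infty$ and radii $r_j\searrow r/2$, carried out verbatim as for Theorem~\ref{teo_bdd}, then yields a quantitative control of $\esssup_{B_{r/2}(\xi_0)}u$ in terms of $M$, $\|(u-M)_+\|_{L^p(B_r(\xi_0))}$, and $\Tl((u-M)_+;\xi_0,r)$.

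\medskip

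\textbf{Lower bound and main difficulty.} Let $m:=\essinf_{B_r(\xi_0)}g>-\infty$. The modified competitor
\[
v:=u+\phi\,(m-u)_+
\]
is again admissible: in $B_r(\xi_0)\smallsetminus\Omega$ one has $u=g\geq m$, hence $(m-u)_+=0$ and $v=g$, while in $\Omega$ trivially $v\geq u\geq h$. Testing the variational inequality against $v-u=\phi\,(m-u)_+\geq 0$ and exploiting that $u$ is a weak supersolution in $\Omega$, one obtains a Caccioppoli-type estimate for $(u-m)_-=(m-u)_+$; its tail stays under control because $u$ coincides with the bounded datum $g$ on $B_r(\xi_0)\smallsetminus\Omega$ and $u_-\in L^{p-1}_{sp}(\He)$. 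A dual De~Giorgi iteration, or equivalently an application of the weak Harnack argument underlying Theorem~\ref{thm_weak} in which positivity is provided by the boundary datum on a set of positive density around $\xi_0$, delivers the quantitative lower bound $\essinf_{B_{r/2}(\xi_0)}u\geq m-C(\cdots)$. The main technical obstacle in both parts is precisely the admissibility check: the sharp interplay between the upper bound on $g$, the upper bound on $h$, and the lower bound on $g$ is what guarantees that each of the two truncations $u\mapsto u\mp\phi(u-k)_\pm$ does not violate either the obstacle or the Dirichlet constraint across $\partial\Omega$. Once this is verified, the rest of the argument is a boundary adaptation of the interior iterations already developed in Section~\ref{sec_hold}.
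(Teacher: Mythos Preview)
The paper does not include a proof of this theorem; it merely states the result and cites Theorem~3.6 in~\cite{Pic22}. So there is no ``paper's own proof'' to compare against. That said, your strategy is precisely the standard one (as in the Euclidean counterpart~\cite{KKP16}) and is correct in its essentials.

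Your admissibility checks for the two competitors are the heart of the matter and are done correctly: the threshold $k\geq M=\max\{\esssup_{B_r}g,\,\esssup_{B_r\cap\Omega}h\}$ is exactly what is needed so that $v=u-\phi(u-k)_+$ still satisfies both $v=g$ on $\He\smallsetminus\Omega$ and $v\geq h$ in $\Omega$; likewise $k\leq m=\essinf_{B_r}g$ guarantees $(u-k)_-$ vanishes on $B_r\smallsetminus\Omega$. This is the crucial observation, and once it is in hand the Caccioppoli estimate and the De~Giorgi iteration proceed exactly as in the interior case of Theorem~\ref{teo_bdd}, since the test function is genuinely supported in $B_r$ and the level-set structure is identical.

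One small point deserves clarification. In the lower-bound part you write ``a dual De~Giorgi iteration, \emph{or equivalently} an application of the weak Harnack argument \dots\ in which positivity is provided by the boundary datum on a set of positive density around $\xi_0$''. These two routes are not equivalent here: the weak-Harnack route would require the measure density condition~\eqref{condizione misura}, which is \emph{not} assumed in the present theorem (it only enters later in Theorem~\ref{bound_hold}). Fortunately, the plain De~Giorgi iteration for $(u-k)_-$ with $k\leq m$ does not need any density hypothesis --- it only needs the Caccioppoli inequality coming from the supersolution property (Corollary~\ref{properties_obst_sol}) and finiteness of $\|(u-m)_-\|_{L^p(B_r)}$ and $\Tl((u-m)_-;\xi_0,r)$, both of which you correctly note. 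So drop the weak-Harnack alternative and keep the direct iteration; the argument then goes through cleanly.
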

	In order to prove the H\"older continuity up to the boundary, a natural measure density condition on~$\Omega$ has to be assumed; that is, there exists~$\delta_\Omega \in (0,1)$ and~$r_0>0$ such that, for every~$\xi_0 \in \partial \Omega$,
	\begin{equation}\label{condizione misura}
		\inf_{r \in (0,r_0)} \frac{|(\He \smallsetminus\Omega)\cap B_r(\xi_0)|}{|B_r(\xi_0)|} \geq \delta_\Omega.
	\end{equation}
	The requirement above is in the same spirit of the classical nonlinear Potential Theory and --~as expected in view of the nonlocality of the involved equations~-- it is translated into an information given  on the complement of the set~$\Omega$. 
   We finally have 
   \begin{thm}[{\bf Regularity for the obstacle problem}. Theorem~1.3 in~\cite{Pic22}]\label{bound_hold}
   	Let~$s \in (0,1)$,~$p \in (1,\infty)$ and, under assumptions~$(F_1)$,~$(F_2)$ and~$(F_3)$, let~$u$ solves the obstacle problem in~$\mathcal{K}_{g,h}(\Omega,\Omega')$ with~$g \in \mathcal{K}_{g,h}(\Omega,\Omega')$ and $\Omega$ satisfying condition~\eqref{condizione misura}. If~$g$ is locally H\"older continuous  (resp. continuous) in~$\Omega'$ and~$h$ is locally H\"older continuous  (resp. continuous) in~$\Omega$ or~$h \equiv -\infty$, then~$u$ is locally H\"older continuous (resp. continuous) in~$\Omega'$.
   \end{thm}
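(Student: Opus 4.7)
The proof splits according to where we want to establish decay of the oscillation. For $\xi \in \Omega' \setminus \overline{\Omega}$, $u$ coincides a.e.\ with $g$, so local regularity of $g$ gives the conclusion immediately. It thus remains to deal with interior points $\xi_0 \in \Omega$ and boundary points $\xi_0 \in \partial\Omega$, both via a one-step oscillation decay iterated on dyadic balls. The principal tools are Corollary~\ref{properties_obst_sol} (which makes $u$ a weak supersolution on $\Omega$ and a weak solution where $u > h$), the Caccioppoli inequality of Theorem~\ref{teo_caccioppoli}, the supremum estimate of Theorem~\ref{teo_bdd}, and the weak Harnack inequality of Theorem~\ref{thm_weak}.

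For an interior point $\xi_0 \in \Omega$ I would distinguish non-contact from contact points. If $\essinf_{B_r(\xi_0)}(u-h) > 0$ for some $r$, then Corollary~\ref{properties_obst_sol} makes $u$ a weak solution to~\eqref{pbm} on $B_r(\xi_0)$ and Theorem~\ref{teo_holder} directly yields the required H\"older decay with exponent $\alpha < sp/(p-1)$. At a contact point $\xi_0$ with $u(\xi_0) = h(\xi_0)$, the inequality $u \geq h$ and the H\"older modulus of $h$ supply the lower bound
\[
u(\xi) \,\geq\, h(\xi_0) - c\,|\xi_0^{-1}\circ\xi|_{\He}^{\alpha_h}
\]
in a neighborhood of $\xi_0$. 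The matching upper bound is the technical heart of the interior argument: I would compare $u$ with the weak solution $w$ of the unconstrained problem on $B_r(\xi_0)$ having boundary datum $u$ outside $B_r(\xi_0)$ and the same right-hand side; since $u$ is a supersolution with $u = w$ outside $B_r(\xi_0)$, comparison gives $u \geq w$. For the upper bound on $u$ one then leverages the solution structure of $u$ on the open set $\{u > h\}$ together with the continuity of $h$: via a covering/dichotomy argument, either $u(\xi_0) \leq h(\xi_0) + \varepsilon$ on a neighborhood (so $u$ is pinned close to $h$, which is H\"older), or $u > h$ on a ball and Theorem~\ref{teo_holder} applies. Merging yields geometric decay of $\osc u$ with exponent $\min\{\alpha,\alpha_h\}$.

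For a boundary point $\xi_0 \in \partial\Omega$, the plan is to adapt the Moser-type oscillation reduction used in Theorem~\ref{teo_holder}, now exploiting the measure density condition~\eqref{condizione misura}. Denoting $M(r) := \esssup_{B_r(\xi_0)} u$ and $m(r) := \essinf_{B_r(\xi_0)} u$, the function $u - m(r) \geq 0$ is a weak supersolution to~\eqref{pbm} on $B_r(\xi_0)\cap \Omega$ by Corollary~\ref{properties_obst_sol} and translation invariance of $\Lc$. On the exterior set $(\He \setminus \Omega) \cap B_r(\xi_0)$, whose relative measure is at least $\delta_\Omega$ by~\eqref{condizione misura}, $u$ agrees a.e.\ with $g$, and the H\"older regularity of $g$ yields $u \geq g(\xi_0) - c\,r^{\alpha_g}$ there. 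Plugging this into Theorem~\ref{thm_weak} and performing the usual dichotomy on whether $u$ sits closer to $M(r)$ or to $m(r)$ on a large portion of $B_r(\xi_0)$, one arrives at a recursive estimate of the form
\[
\osc_{B_{\tau r}(\xi_0)} u \,\leq\, (1-\mu)\,\osc_{B_r(\xi_0)} u + c\,r^{\alpha_g} + c\left(\frac{r}{R}\right)^{\frac{sp}{p-1}} \Tl(u_-;\xi_0,R),
\]
for suitable $\tau,\mu \in (0,1)$ depending only on the data and $\delta_\Omega$; a standard iteration lemma then converts this into a polynomial rate of decay.

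The main obstacle will be controlling the nonlocal tail throughout this iteration, since shifting $u$ by a constant does not make the tail disappear. I would exploit the factor $(r/R)^{sp/(p-1)}$ in front of $\Tl(u_-;\xi_0,R)$ in~\eqref{eq_weakharnack} to absorb the cumulative tail contribution into a convergent geometric series across dyadic scales, keeping it uniformly bounded. For the continuity (rather than H\"older) version, the same scheme runs with the moduli of continuity of $g$ and $h$ in place of explicit H\"older exponents; the measure density condition~\eqref{condizione misura} is precisely what ensures that a genuine modulus of continuity of $u$ at the boundary, and not merely a bound on its oscillation, is produced.
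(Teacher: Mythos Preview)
Your route differs from the paper's. The paper does not use the weak Harnack inequality of Theorem~\ref{thm_weak}; instead it works in the De~Giorgi style, extending the logarithmic estimate of Lemma~\ref{lem_log} to the truncations
\[
w_\pm \;=\; \esssup_{B_R}(u-k_\pm)_\pm \,-\, (u-k_\pm)_\pm \,+\,\eps,
\]
with levels chosen so that
\[
k_+ \,\geq\, \max\Big\{\esssup_{B_R} g,\ \esssup_{B_R\cap\Omega} h\Big\}
\qquad\text{and}\qquad
k_- \,\leq\, \essinf_{B_R} g.
\]
This choice of levels is the device that simultaneously neutralises the obstacle and the boundary datum: above $k_+$ the constraint $u\geq h$ is inactive, so the variational inequality admits $(u-k_+)_+$-type test functions exactly as for a genuine subsolution; below $k_-$ the function agrees with the boundary datum on $B_R\setminus\Omega$, so the test functions are supported in $\Omega$. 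The log-estimate for $w_\pm$ is then combined with a fractional Poincar\'e inequality, the measure density~\eqref{condizione misura}, Chebyshev, and Theorem~\ref{bound_bdd} to obtain the one-step decay
\[
\osc_{B_{\tau r}} u + \sigma\,\Tail(u;0,\tau r)\ \leq\ (1-\vartheta)\,\omega,
\]
with an explicit smallness condition on $\osc g$ and on $\|f\|_{L^\infty}$, and the H\"older exponent is then read off from the iteration.

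Your Moser-type plan is a legitimate alternative in spirit, but as written it has a genuine gap at the boundary. The ``usual dichotomy'' needs, besides the supersolution $u-m(r)$, also $M(r)-u$ to satisfy a weak Harnack (equivalently, $u$ to behave like a subsolution at the top level). In the obstacle problem $u$ is \emph{not} a subsolution on the contact set, and you never say how the obstacle $h$ enters the boundary step; if $h$ is close to $g$ near $\xi_0$ the $M(r)-u$ side simply fails. The paper's fix---raising the truncation level above $\esssup h$ so that the variational inequality yields the needed energy/log estimates for the upper truncation---is exactly the missing idea. A second, smaller issue is that Theorem~\ref{thm_weak} is stated for supersolutions on a full ball $B_R\subset\Omega$; at $\xi_0\in\partial\Omega$ the ball straddles the boundary, and you need the level condition $k_-\leq\essinf_{B_R} g$ (so that the relevant test functions are supported in $\Omega$) before any Harnack-type estimate can be invoked. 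Both points are handled in the paper by the choice of $k_\pm$ above; your sketch would go through once you incorporate that device.
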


   \begin{proof}
 With no loss of generality we may suppose $\xi_0=0$ and $g(0)=0$. Moreover, we may choose $R_0$ such that $\osc_{B_0}g \leq \osc_{B_0}u$ for $B_0 \equiv B_{R_0}(0)$ since otherwise we have nothing to prove, and  define 
\[
	\omega_0 := 8 \left(\osc_{B_0}u + {\rm Tail}(u;0,R_0) +\frac{ \|f\|^\frac{1}{p-1}_{L^\infty(B_0)}}{\vartheta}\right)\,,
\]
for some~$\vartheta \in (0,1)$.

The proof relies on a logarithmic estimate with tail~\cite[Lemma~3.7]{Pic22}, which follows the same argument of~Lemma~\ref{lem_log} and it is then obtained by a proper choice of the test functions and by a careful estimates of the various energy contributions depending on the range of the integrability exponent~$p$. Such a logarithmic lemma can be subsequently extended to truncations of the solution to the obstacle problem, as follows: let
$B_R \Subset \Omega'$, let $B_r \subset B_{R/2}$ be concentric balls and let
\[
\infty > k_+ \geq \max\bigg\{\esssup_{B_R} g, \esssup_{B_R \cap \Omega} h\bigg\} \quad \text{and} \quad -\infty < k_- \leq \essinf_{B_R} g;
\]
 then the functions $w_\pm:=\esssup_{B_R}(u-k_\pm)_\pm-(u-k_\pm)_\pm+\eps$
satisfy the following estimate
\begin{eqnarray*} 
&&	\int_{B_r}\int_{B_r} \left|\log \frac{w_\pm(\xi)}{w_\pm(\eta)}\right|^p \dd(\eta^{-1} \circ \xi)^{-Q-sp} \dxieta \notag\\*[0.5ex]
&&\qquad \leq c \, r^{Q-sp} \left(1+ r^{sp}\eps^{1-p}\|f\|_{L^\infty(B_R)}+\eps^{1-p} \left(\frac{r}{R}\right)^{sp} {\rm Tail}((w_\pm)_-;\xi_0,R)^{p-1}\right),
\end{eqnarray*}
for every $\eps>0$. Then, we combine the estimate in the display above with a fractional Poincar\'e-type inequality~\cite[Lemma~3.9]{Pic22}, whose proof can be done extending the one presented in~\cite[Lemma~7]{KKP16},  together with some estimates for the tail term thanks to an application of the Chebyshev inequality and in view of the boundedness result up to the boundary in Theorem~\ref{bound_bdd}. 

Eventually we arrive to prove that there exist~$\tau_0$, $\sigma$ and $\vartheta$ depending only on $n$,~$p$,~$s$ and~$\delta_\Omega$ such that if
\[
	\osc_{B_r(0)} u + \sigma \, {\rm Tail}(u;0,r) \leq \omega, \quad \osc_{B_r(0)}g \leq \frac{\omega}{8} \quad \mbox{and} \quad \|f\|_{L^\infty (B_r(0))} \leq \left(\vartheta \omega \right)^{p-1},
\]
hold for a ball~$B_r(0)$ and for~$\omega>0$, then
\[
	\osc_{B_{\tau r}(0)} u + \sigma \, {\rm Tail}(u;0,\tau r) \leq (1-\vartheta)\omega,
\]
holds for every~$\tau \in (0,\tau_0]$.  Finally, as we can take $\tau \leq \tau_0$ such that
$$
\osc_{\tau^{j}B_0}g \leq (1-\theta)^{j}\frac{\omega_0}{8} \qquad \text{for every } j=0,1,\dots,
$$
an iterative argument will give that $u$ belongs to $C^{0,\alpha}(B_0)$. 
\end{proof}

     Theorems~{\rm\ref{bound_bdd}} and~{\rm\ref{bound_hold}} extends to the
      Heisenberg setting the results obtained by Korvenp\"a\"a, Kuusi  and one of the authors in~\cite{KKP16}, as well as they generalize to the fractional setting the regularity properties obtained for the obstacle problem in Carnot groups; see for instance~\cite{DGP07,DGS03} and the references therein. Nevertheless, the involved functional and geometrical settings are different than the ones considered in the aforementioned papers. Indeed, given the nonlinearity~$f \equiv f(\cdot,u)$, one has take into account the monotonicity assumption in~$(F_3)$ in order to extend the theory presented in~\cite{KKP16} to a more general nonhomogeneous framework. 

    Very general boundary regularity is in most cases a very challenging task in~$\He$. We would just mention the relevant  counterexample proven by Jerison in the seminal papers~\cite{Jer81,Jer81-2}. 
    For what concerns Theorem~\ref{bound_hold}, let us stress that we 
   proved the transferring of regularity from the obstacle (whether is not identically equals to~$-\infty$) and boundary datum to the solution~$u$ of the obstacle problem.
    In the same vein of further developments, it is quite natural to wonder whether or not the~H\"older-regularity obtained in Theorem~\ref{bound_hold} can be improved (under further assumptions on the data) at some extent: at least in the linear case, when~$p=2$, to the best of our knowledge no further regularity is known in the fractional case even though several breakthrough results have been achieved in the Euclidean case; see~\cite{CSS08,Sil07} for $C^{1,s}$-regularity estimates and related results.

	\vspace{2mm}
	%
	%
	\subsection{Boundedness and H\"older continuity up to the boundary for fractional Laplace problems}\label{sec_laplace}
\mbox{}	
\\ Let us come back to the $p$-fractional Laplace problem in the Heisenberg group; that is,
\begin{equation}\label{problema}
\begin{cases}
			\Lc u   = 0 & \text{in} \ \Omega,\\[0.5ex]
			u   =g & \text{in} \ \He \smallsetminus\Omega,
		\end{cases}
	\end{equation}
	where~$\Lc$ is given by~\eqref{main_op}, and  $g$ belongs to $HW^{s,p}(\He)$.

The boundedness and H\"older results in Section~\ref{sec_hold} can be generalized at some extents up to the boundary of~$\Omega$, by taking into account the measure density condition~\eqref{condizione misura}.
	\begin{thm}{\rm({\bf Boundedness up to the boundary}. See \cite[Theorem 3.6]{Pic22})}.
Let $s\in (0,1)$ and $p \in (1,\infty)$. Suppose that the domain~$\Omega\subset\He$ satisfies the measure density condition in~\eqref{condizione misura}. Let $x_0\in \partial \Omega$ and suppose that  the boundary data~$g$ is  essentially bounded close to $x_0$.
Let $u$ be a weak solution to~\eqref{problema}. Then $u$ is essentially bounded close to $x_0$ as well.
\end{thm}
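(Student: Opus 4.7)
The plan is to mimic the interior boundedness argument of Theorem~\ref{teo_bdd} via a De Giorgi iteration on the truncations of $u$ above the essential supremum of the boundary datum near $\xi_0$, where the r\^ole of the fractional Poincar\'e inequality is played by the measure density condition~\eqref{condizione misura} combined with the Folland-Stein-Sobolev embedding of Theorem~\ref{sobolev}. We also note that a weak solution to~\eqref{problema} is in particular a solution to the obstacle problem in the class $\mathcal{K}_{g,-\infty}(\Omega,\Omega')$ for any $\Omega'\Supset\Omega$, so the statement is a direct corollary of Theorem~\ref{bound_bdd}; the direct strategy sketched below is precisely the one employed in the proof of Theorem~\ref{bound_bdd}.

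Concretely, I would first fix $r_0>0$ so small that $k_+ := \esssup_{B_{r_0}(\xi_0)} g < \infty$ and set $w := (u-k_+)_+$. Since $u \equiv g$ a.e.\ in $\He \smallsetminus \Omega$, the function $w$ vanishes on $B_r(\xi_0) \smallsetminus \Omega$ for every $r \in (0,r_0)$, and the condition~\eqref{condizione misura} yields
\[
|\{w=0\} \cap B_r(\xi_0)| \,\geq\, \delta_\Omega\,|B_r(\xi_0)|.
\]
Next, I would introduce dyadic radii $r_j := (1+2^{-j})r/2$, increasing levels $k_j := k_+ + M(1-2^{-j})$ and smooth cut-offs $\phi_j$ between $B_{r_{j+1}}$ and $B_{r_j}$, and apply the Caccioppoli inequality with tail (Theorem~\ref{teo_caccioppoli}) to $w_j:=(u-k_j)_+$ tested against $\phi_j$, gaining control of the Gagliardo seminorm of $w_j\phi_j$ in terms of the $L^p$ norm of $w_j$, of the consecutive-level gap $k_{j+1}-k_j = M 2^{-j-1}$, and of the nonlocal tail $\Tl((u-k_j)_+;\xi_0,r_j)$. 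Combining this with the Folland-Stein-Sobolev embedding applied to $w_j\phi_j \in HW^{s,p}_0(B_{r_j})$ and with Chebyshev's inequality, I would arrive at a super-linear iteration
\[
\nu_{j+1} \,\le\, c_\star \, b^{\,j} \, \nu_j^{1+\sigma}, \qquad \nu_j := \frac{|\{u > k_j\}\cap B_{r_j}(\xi_0)|}{|B_{r_j}(\xi_0)|},
\]
for exponents $\sigma, b, c_\star$ depending on the data and on $M$. A standard iteration lemma then renders $\nu_j \to 0$ provided $\nu_0$ is small enough, which in turn occurs as soon as $M$ is a fixed multiple of $(\dashint_{B_{r_0}(\xi_0)} u_+^p\,\dxi)^{1/p} + \Tl(u_+;\xi_0,r_0) + \|g\|_{L^\infty(B_{r_0}(\xi_0))}$, yielding $u \le k_+ + M$ a.e.\ in $B_{r/2}(\xi_0)$. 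The lower bound follows by applying the symmetric argument to $(u-k_-)_-$, with $k_- := \essinf_{B_{r_0}(\xi_0)} g > -\infty$ by hypothesis.

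The main obstacle is the uniform handling of the tail $\Tl((u-k_j)_+;\xi_0,r_j)$ that reappears at each step of the iteration: it must be dominated by $\Tl(u_+;\xi_0,r_0)$ plus an absorbable power of $M$, so that the long-range contributions do not spoil the recurrence. Its a priori finiteness is guaranteed by the inclusion $u \in HW^{s,p}(\He) \subset L^{p-1}_{sp}(\He)$. In the degenerate range $sp \ge Q$, where Theorem~\ref{sobolev} is not directly available, it must be replaced by an ad-hoc higher-integrability estimate in the same spirit as the tail lemma and as in the proof of Theorem~\ref{thm_weak}.
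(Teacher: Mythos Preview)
Your proposal is correct and identifies exactly the paper's approach: the paper proves this theorem in one line by noting that a weak solution to~\eqref{problema} is the solution to the obstacle problem in~$\mathcal{K}_{g,-\infty}(\Omega,\He)$, whence the result is a direct corollary of Theorem~\ref{bound_bdd}. Your additional De~Giorgi-iteration sketch is a plausible outline of how Theorem~\ref{bound_bdd} itself is proved in~\cite{Pic22}, though note that Theorem~\ref{bound_bdd} as stated does \emph{not} assume the measure density condition~\eqref{condizione misura}: boundedness at the boundary only needs that~$(u-k_+)_+$ vanishes on~$B_r(\xi_0)\smallsetminus\Omega$ so that Caccioppoli and the Sobolev embedding can be run on the full ball, whereas the density condition is genuinely required only for the oscillation decay in the H\"older result (Theorem~\ref{bound_hold}).
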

The result above is merely an application of Theorem~\ref{bound_bdd} in the previous section for the fractional obstacle problem: it just suffices to consider the special case when there is no obstacle at all; precisely, take $\Omega'\equiv\He$ and $h\equiv -\infty$ there.
	
	The same strategy can be applied by using Theorem~\ref{bound_hold} in order to prove H\"older continuity results up to the boundary for the solutions to the $p$-fractional problem. We have the following
	\begin{thm}{\rm({\bf H\"older regularity up to the boundary}. See \cite[Theorem 1.3]{Pic22})}.
	Let $s\in (0,1)$ and $p \in (1,\infty)$. Suppose that the domain~$\Omega\subset\He$ satisfies the measure density condition in~\eqref{condizione misura}. Let $x_0\in \partial \Omega$ and suppose that  the boundary data $g$  is  H\"older continuous close to $x_0$.
Let $u$ be a weak solution to~\eqref{problema}. Then $u$ is H\"older continuous close to $x_0$ as well.
	\end{thm}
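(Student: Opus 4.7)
The plan is to reduce problem~\eqref{problema} to the fractional obstacle problem of Section~\ref{sec_obs} by removing the obstacle constraint, and then to invoke the boundary H\"older regularity statement of Theorem~\ref{bound_hold}. Specifically, take $\Omega' \equiv \He$, $h \equiv -\infty$, and $f \equiv 0$ in the obstacle framework. Under this choice the admissible class reduces to
\[
\mathcal{K}_{g,-\infty}(\Omega,\He) \;=\; \big\{v \in HW^{s,p}(\He): v = g \text{ a.e. in } \He \smallsetminus \Omega\big\},
\]
which is precisely the class in which weak solutions of~\eqref{problema} live. Since the trivial right-hand side satisfies~$(F_1)$--$(F_3)$, the obstacle existence/uniqueness result produces a unique $\tilde u \in \mathcal{K}_{g,-\infty}(\Omega,\He)$. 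Because $h \equiv -\infty$ the contact set is empty, so Corollary~\ref{properties_obst_sol} upgrades $\tilde u$ to a weak solution of~\eqref{problema} in the whole of~$\Omega$; a standard monotonicity argument for the $(s,p)$-fractional operator~$\Lc$ forces $u \equiv \tilde u$, and it thus suffices to establish the boundary H\"older continuity of~$\tilde u$.

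With this identification, Theorem~\ref{bound_hold} is the natural tool: the hypotheses on $\Omega$ (measure density) and on $h$ (here $h \equiv -\infty$) are directly in place. The delicate point is that the theorem requires $g$ to be locally H\"older continuous on~$\Omega'$, while the present assumption only provides H\"older continuity of $g$ on some neighborhood~$U$ of~$\xi_0$. One bypasses this by localizing the argument: the proof of Theorem~\ref{bound_hold} is intrinsically local, consisting of an iterative oscillation reduction centered at the point of interest, so the only information needed from $g$ is $\osc_{B_r(\xi_0)} g \leq c\, r^\beta$ for all small~$r$, which is exactly what our assumption provides.

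Concretely, I would fix a ball $B_{R_0}(\xi_0)\subset U$ on which $g$ is H\"older continuous, set
\[
\omega_0 \;\sim\; \osc_{B_{R_0}(\xi_0)} u \,+\, \Tail(u;\xi_0,R_0),
\]
and run the iteration sketched after Theorem~\ref{bound_hold}: the measure density of $\He\smallsetminus\Omega$ near $\xi_0$ triggers the expansion-of-positivity step needed to shrink the oscillation of the truncations $w_\pm := \esssup_{B_R}(u-k_\pm)_\pm - (u-k_\pm)_\pm + \eps$ on consecutive dyadic balls, while the hypothesis $\osc_{B_{\tau^j R_0}(\xi_0)} g \leq (1-\vartheta)^j \omega_0/8$, valid for a suitable $\tau$ depending on the H\"older exponent of~$g$, matches the required geometric decay at each step. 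Iterating yields $\osc_{B_\rho(\xi_0)} u \leq c\, \rho^\alpha$ for all small $\rho$, i.e.\ the desired local H\"older bound at~$\xi_0$. The hardest point will be the interplay between the nonlocal tail contributions and the boundary behavior: at every step the tail of~$u$ on faraway balls could spoil the decay, and the measure density must be used quantitatively, together with the truncated logarithmic estimate on boundary balls adapted from Lemma~\ref{lem_log}, to absorb the long-range terms into the decaying geometric sequence.
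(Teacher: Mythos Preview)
Your proposal is correct and follows exactly the route indicated in the paper: specialize the obstacle framework to $\Omega'=\He$, $h\equiv -\infty$, $f\equiv 0$, identify the (unique) obstacle solution with the weak solution~$u$, and then read off boundary H\"older regularity from Theorem~\ref{bound_hold}. Your additional care about the localization of the hypothesis on~$g$ is well placed and matches how the proof of Theorem~\ref{bound_hold} is actually structured (the iteration is centered at the boundary point and only uses $\osc_{B_r(\xi_0)}g$ for small~$r$), so no genuine extra work is needed beyond what is already in that proof.
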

	
	\vspace{2mm}
	%
	%
	\section{The Perron method}\label{sec_perron}
		  In this section we investigate the Dirichlet problem~\eqref{pbm} in a very general setting by proving the existence of a generalized solution in the sense of Perron. As well known, Perron's Method is a classical tool in Potential Theory and it provides the existence of a solution for the Dirichlet problem related to the Laplace equation in an arbitrary open set~$\Omega \subset \er^n$ and for arbitrary boundary datum~$g$, without \ap regularity assumption". 
	  Roughly speaking, it simply works by finding the least superharmonic function greater or equal to the datum~$g$ on the boundary~$\partial \Omega$, and it can be applied once knowing some helpful properties of superharmonic functions, such as comparison and maximum principles, and barriers techniques.  
	  
	  For a deep treatment of the classical Potential Theory in stratified Lie groups, we refer the interested reader to the monograph~\cite{BLU07} by Bonfiglioli, Lanconelli and Uguzzoni.
	  
	\vspace{2mm}
    Let us focus on our fractional framework. The starting point is into giving a proper definition of upper / lower class of functions.
	
	\begin{defn}[{\bf Perron solutions}. See Definition 5.1~in~\cite{Pic22}]\label{perron classes and solutions} Let~$\Omega$ be an open and bounded subset of~$\He$ and assume that~$g \in HW^{s,p}(\He)$. We define the \textup{upper class} ~$\mathcal{U}_g$ as the family of all functions~$u$ such that
		\begin{enumerate}[\rm(i)]
			\item $u: \He \rightarrow (-\infty,+\infty]$ is lower semicontinuous in~$\Omega$ and
			there exists~$h \in L^{p-1}_{sp}(\He)$ such that~$u \leq h$;
			\item  Given~$D \Subset \Omega$ and $v \in C(\overline{D})$ a weak solution in~$D$ to~\eqref{pbm} such that~$v \leq u$ almost everywhere in $\He \smallsetminus D$ and
			$$
			v(\xi) \leq \liminf\limits_{\eta \rightarrow \xi} u(\eta), \qquad \forall \xi \in \partial D.
			$$ 
			Then,~$v \leq u$ a.\!~e. in~$D$ as well;
			\item  $\liminf\limits_{\eta \rightarrow \xi \atop \eta \in \Omega}u(\eta) \geq \textup{ess}\limsup\limits_{\eta \rightarrow \xi \atop \eta \in \He \smallsetminus\Omega}g(\eta)$ for all~$\xi \in \partial \Omega$;
			\item $u \geq  g $ a.\!~e. in~$\He \smallsetminus\Omega$.
		\end{enumerate}
		The \textup{lower class} is~$\mathcal{L}_g:=\big\{ u: -u \in \mathcal{U}_{-g}\}$. The function~$\overline{H}_g := \inf\{u : u \in \mathcal{U}_g\}$ is the \textup{upper Perron solution} with boundary datum~$g$ in~$\Omega$ and~$\underline{H}_g := \sup\{u : u \in \mathcal{L}_g\}$ is the \textup{lower Perron solution} with boundary datum~$g$ in~$\Omega$.
     	\end{defn}
	
	\begin{rem}
	Note that in view of~{\rm(i)} in~{\rm Definition~\ref{perron classes and solutions}}, one can deduce that $|u| \neq \infty$ a.~\!e.~in~$\He$, otherwise it can not be~$h \in L^{p-1}_{sp}(\He)$ such that~$u \leq h$ in~$\He$. Moreover, the upper class~$\mathcal{U}_g$ contains all lower semicontinuous weak supersolutions satisfying the boundary data. The same holds true for weak subsolutions and for the lower class~$\mathcal{L}_g$. 
	\end{rem}

Now, an expected comparison principle result.
  \begin{thm}[{\bf Comparison principle}. See Proposition~4.4 in~\cite{Pic22}]\label{comparison a.e.}
   Let~$\Omega$ be a bounded open subsets of~$\h^n$. Let~$u \in HW^{s,p}(\Omega)$ be a weak supersolution to~\eqref{pbm} in~$\Omega$ and let~$v\in HW^{s,p}(\Omega)$ be a weak subsolution to~\eqref{pbm} in~$\Omega$ such that~$ v \leq u$ almost everywhere in~$\h^n \smallsetminus \Omega$ and 
    $$
   	\limsup_{\eta \rightarrow \xi}v(\eta) \leq \liminf\limits_{\eta \rightarrow \xi} u(\eta), \qquad \mbox{for any}~\xi \in \partial \Omega.
   $$ 
   Then, under assumptions~$(F_1)$,~$(F_2)$ and~$(F_3)$, we have that~$v \leq u$ a.\!~e. in~$\Omega$ as well.
   \end{thm}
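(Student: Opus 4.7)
The natural plan is to test the weak formulations against the admissible function $\psi:=(v-u)_+$ and exploit both the monotonicity of $\tau\mapsto |\tau|^{p-2}\tau$ and the monotonicity assumption $(F_3)$ on $f$. The first task is to verify that $\psi$ is a legitimate test function, namely $\psi\in HW^{s,p}_0(\Omega)$ and $\psi\ge 0$. The a.e.\ condition $v\le u$ in $\He\smallsetminus\Omega$ gives $\psi\equiv 0$ outside $\Omega$, while the boundary condition $\limsup_{\eta\to\xi}v(\eta)\le\liminf_{\eta\to\xi}u(\eta)$ for every $\xi\in\partial\Omega$ ensures the correct trace behaviour so that, combined with $u,v\in HW^{s,p}(\Omega)$ and the Lipschitz character of $t\mapsto t_+$, one gets $\psi\in HW^{s,p}_0(\Omega)$ via a standard density/extension argument.

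Then I plug $\psi$ into the weak formulations, use the subsolution inequality for $v$, the supersolution inequality for $u$ and subtract, obtaining
\begin{equation*}
\int_{\He}\int_{\He}\frac{\Phi_{v}(\xi,\eta)-\Phi_{u}(\xi,\eta)}{\dd(\eta^{-1}\circ\xi)^{Q+sp}}\bigl(\psi(\xi)-\psi(\eta)\bigr)\dxieta\ \leq\ \int_{\Omega}\bigl(f(\xi,v)-f(\xi,u)\bigr)\psi(\xi)\dxi,
\end{equation*}
where $\Phi_w(\xi,\eta):=|w(\xi)-w(\eta)|^{p-2}\bigl(w(\xi)-w(\eta)\bigr)$. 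By $(F_3)$, on $\{\psi>0\}=\{v>u\}$ one has $f(\xi,v)-f(\xi,u)\le 0$, so the right-hand side is $\le 0$.

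For the left-hand side I decompose $\He\times\He$ into the four regions determined by the signs of $w(\xi):=v(\xi)-u(\xi)$ and $w(\eta)$. On $\{w(\xi)\ge 0\}\cap\{w(\eta)\ge 0\}$ the integrand equals $[\Phi_v-\Phi_u]\cdot(w(\xi)-w(\eta))$, which is nonnegative by the classical monotonicity of $\tau\mapsto|\tau|^{p-2}\tau$ applied to $a=v(\xi)-v(\eta)$, $b=u(\xi)-u(\eta)$, since $a-b=w(\xi)-w(\eta)$. On the mixed region $\{w(\xi)\ge 0\}\cap\{w(\eta)\le 0\}$, one has $\psi(\xi)-\psi(\eta)=\psi(\xi)\ge 0$ and $a=v(\xi)-v(\eta)\ge u(\xi)-u(\eta)=b$, so again $\Phi_v\ge\Phi_u$ and the contribution is nonnegative; the symmetric case is analogous, and on $\{w\le 0\}\times\{w\le 0\}$ the integrand vanishes identically. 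Hence the left-hand side is $\ge 0$, and combined with the upper bound the whole identity forces both sides to vanish.

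Finally, to conclude $\psi\equiv 0$ a.e.\ I argue by contradiction: if $|\{v>u\}\cap\Omega|>0$, then the mixed region $\{\psi(\xi)>0\}\times\{\psi(\eta)=0\}$ has positive $\He\times\He$ measure (using that $\He\smallsetminus\{\psi>0\}$ has infinite measure since $\psi\equiv 0$ outside the bounded set $\Omega$). On that region, the strict monotonicity of $\tau\mapsto|\tau|^{p-2}\tau$ gives $\Phi_v(\xi,\eta)>\Phi_u(\xi,\eta)$ and $\psi(\xi)>0$, producing a strictly positive contribution to the integral, a contradiction. The main obstacle I foresee is the justification that $(v-u)_+\in HW^{s,p}_0(\Omega)$ under only the a.e.\ inequality outside and the $\limsup/\liminf$ condition on $\partial\Omega$, which requires the subtle trace/extension properties of fractional Sobolev spaces on $\He$; the rest of the argument is a routine but careful case-by-case check of the $p$-monotonicity.
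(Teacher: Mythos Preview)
Your proposal is correct and follows essentially the same strategy as the paper: test with $\psi=(v-u)_+$, subtract the two weak formulations, and use the monotonicity of $\tau\mapsto|\tau|^{p-2}\tau$ together with $(F_3)$ to sandwich the resulting expression between $0$ and $0$; the paper carries out the same region decomposition and draws the same conclusion. The one place where the paper is more direct is the admissibility of $\psi$: rather than appealing to fractional trace or density arguments, the paper simply observes that the boundary hypothesis $\limsup v\le\liminf u$ on $\partial\Omega$ (together with $v\le u$ a.e.\ in $\He\smallsetminus\Omega$ and compactness of $\partial\Omega$) forces $\{v>u\}\Subset K\Subset\Omega$ for some compact $K$, whence $\psi\in HW^{s,p}_0(K)$ immediately. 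This is exactly what the pointwise boundary condition is designed to deliver, and it dissolves the obstacle you flagged at the end.
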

   \begin{proof}
Notice that there exists a compact set~$K$ such that~$\{v > u\} \Subset K \Subset \Omega$. For this, the function~$\psi := (v-u)_+ \in HW^{s,p}_0(K)$ is a proper test function. Considering the weak formulation  for both~$v$ and~$u$ in Definition~\ref{solution to inhomo pbm} with~$\psi$ defined above yields
  \begin{equation}\label{s3 comp 2}
  \int_{\h^n} f(\xi,v) \psi(\xi) \dxi \geq \int_{\h^n}\int_{\h^n} \frac{L(v(\xi),v(\eta))(\psi(\xi)-\psi(\eta))}{\dd(\eta^{-1} \circ \xi)^{Q+sp}}\dxieta,
  \end{equation}
  and
   \begin{equation}\label{s3 comp 3}
  	\int_{\h^n} f(\xi,u) \psi(\xi) \dxi \leq \int_{\h^n}\int_{\h^n} \frac{L(u(\xi),u(\eta))(\psi(\xi)-\psi(\eta))}{\dd(\eta^{-1} \circ \xi)^{Q+sp}}\dxieta,
  \end{equation}
  where, for the sake of readability, we adopted the following notation
   \begin{equation*} 
   L(a,b) := |a-b|^{p-2}(a-b) \qquad \forall \,a,b \in \mathds{R}.
   \end{equation*}

It suffices to sum up~\eqref{s3 comp 2} with~\eqref{s3 comp 3}, in order to get
   \begin{align}\label{s3 comp 1}
   0 & \leq \int_{\{v > u\}} (f(\xi,v)-f(\xi,u))(v(\xi)-u(\xi)) \dxi\notag\\
     & \quad + \iint_{\{v > u \}} \frac{\big(L(u(\xi),u(\eta))-L(v(\xi),v(\eta))\big)\big(v(\xi)-u(\xi)-v(\eta)+u(\eta)\big)} {\dd(\eta^{-1}\circ \xi)^{Q+sp}}\dxieta\notag\\
     & \quad + 2\int_{\{v \leq u\}}\int_{\{v > u \}} \frac{\big(L(u(\xi),u(\eta))-L(v(\xi),v(\eta)\big)\big(v(\xi)-u(\xi)\big)}{ \dd(\eta^{-1}\circ \xi)^{Q+sp}}\dxieta \ \ \leq 0,
   \end{align}
  by the monotonicity of the function~$L(a,b)$ and by condition~$(F_3)$.
  Thus, we obtain that all terms in~\eqref{s3 comp 1} must be~$0$. This implies that~$\psi \equiv0$ almost everywhere on~$\{v > u  \}$. Hence, we obtain that~$|\{v > u\}|=0$. 
  \end{proof}

     The following theorem states the existence of the generalized Perron solution  in the subRiemannian framework of the  Heisenberg group.
   \begin{thm}[{\bf Resolutivity}. See Theorem~1.4 in~\cite{Pic22}]\label{perron}
   	Let~$s \in (0,1)$ and~$p \in (1,\infty)$. Then, under assumptions~{\rm(}$F_1${\rm)},~{\rm(}$F_2${\rm)} and~{\rm(}$F_3${\rm)}, it holds that $\overline{H}_g = \underline{H}_g=H_g$, with~$\overline{H}_g,\underline{H}_g$ defined in Definition~{\rm \ref{perron classes and solutions}}.	
   	   	
   	Moreover,~$H_g$ is a continuous weak solution in~$\Omega$ to problem~\eqref{pbm}, according to Definition~{\rm \ref{solution to inhomo pbm}}.
   \end{thm}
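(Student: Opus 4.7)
The plan is to prove Perron resolutivity via the classical three-step strategy tailored to the nonlocal Heisenberg setting: establish the one-sided comparison $\underline{H}_g \leq \overline{H}_g$, show that each Perron function is a continuous weak solution in $\Omega$, and then invoke the comparison principle to collapse the two into a single solution.

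\textbf{Step 1 (trivial inequality).} Given $u \in \mathcal{U}_g$ and $v \in \mathcal{L}_g$, I want $v \leq u$ a.e.~on $\He$. Outside $\Omega$ this is property (iv) of Definition~\ref{perron classes and solutions} (applied to both $u$ and $-v$). Inside $\Omega$, I would exhaust $\Omega$ by smooth subdomains $D_j \Subset \Omega$ and, for each $j$, consider the obstacle problem with $h \equiv -\infty$ and exterior datum $u$ on $\He \smallsetminus D_j$: by the existence/uniqueness theorem and Corollary~\ref{properties_obst_sol}, this produces a unique continuous weak solution $u_j$ of \eqref{pbm} in $D_j$ coinciding with $u$ outside. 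Condition~(ii) for $v \in \mathcal{L}_g$ gives $v \leq u_j$ a.e.~in $D_j$; passing to the limit $j \to \infty$, with uniform control provided by the H\"older estimate of Theorem~\ref{teo_holder}, delivers $v \leq u$ a.e.~in $\Omega$. Taking infimum over $u$ and supremum over $v$ yields $\underline{H}_g \leq \overline{H}_g$.

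\textbf{Step 2 (upper Perron function is a weak solution).} Fix a countable basis $\{B_i\}$ of balls with $B_i \Subset \Omega$. For any $u \in \mathcal{U}_g$ and ball $B \Subset \Omega$, define the \emph{Poisson modification} $P_B u$ by setting $P_B u = u$ on $\He \smallsetminus B$ and letting $P_B u$ coincide inside $B$ with the unique weak solution of \eqref{pbm} in $B$ having exterior datum $u$; continuity of $P_B u$ on $B$ follows from Theorem~\ref{teo_holder}. The Comparison Principle (Theorem~\ref{comparison a.e.}) together with monotonicity $(F_3)$ gives $P_B u \leq u$ and $P_B u \in \mathcal{U}_g$. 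Iterating along a diagonal enumeration of $\{B_i\}$ produces a decreasing sequence in $\mathcal{U}_g$, and its pointwise limit $\tilde H$ is a weak solution in every $B_i$ thanks to the Caccioppoli estimate with tail (Theorem~\ref{teo_caccioppoli}), which supplies the uniform bounds needed to pass to the limit in the weak formulation. By density of $\{B_i\}$ and the local H\"older bounds, $\tilde H$ is a continuous weak solution on all of $\Omega$; since $\tilde H \leq u$ for every $u \in \mathcal{U}_g$ and $\tilde H \in \mathcal{U}_g$, one gets $\tilde H = \overline{H}_g$. Applying the same argument to $-g$ shows that $\underline{H}_g$ is likewise a continuous weak solution.

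\textbf{Step 3 (equality).} Both $\overline{H}_g$ and $\underline{H}_g$ are continuous weak solutions in $\Omega$, both equal $g$ a.e.~on $\He \smallsetminus \Omega$, and satisfy $\underline{H}_g \leq \overline{H}_g$ by Step~1. A final application of the Comparison Principle (Theorem~\ref{comparison a.e.}) — whose boundary hypotheses are now trivially met by continuity on $\overline{\Omega}$ — forces the reverse inequality $\overline{H}_g \leq \underline{H}_g$. Hence $\overline{H}_g = \underline{H}_g =: H_g$ is the continuous weak solution of \eqref{pbm} claimed by the theorem.

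The hardest step is undoubtedly Step~2: one must verify that the Poisson modification preserves each of the four defining properties of $\mathcal{U}_g$ (taking the lower semicontinuous envelope after every modification to secure (i), using a barrier-type argument supported by Theorem~\ref{bound_hold} to preserve the boundary liminf (iii), and invoking $(F_3)$ to handle the nonlinearity), and that the diagonal limit still belongs to $\mathcal{U}_g$ and solves the equation weakly on all of $\Omega$. The delicate point is the passage to the limit in the nonlocal term, which is possible only by exploiting the uniform $L^{p-1}_{sp}(\He)$-control granted by the tail estimate together with the Caccioppoli inequality with tail — both being precisely the tools introduced in the previous sections.
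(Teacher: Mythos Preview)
Your Step~3 contains a genuine gap. You write that the boundary hypotheses of the Comparison Principle (Theorem~\ref{comparison a.e.}) ``are now trivially met by continuity on $\overline{\Omega}$'', but nothing you have proved gives continuity of $\overline{H}_g$ or $\underline{H}_g$ on $\overline{\Omega}$: Step~2 yields continuity only in the \emph{open} set $\Omega$. Whether the Perron solution attains the boundary datum at a given point $\xi_0\in\partial\Omega$ is exactly the content of a Wiener-type criterion, which the paper explicitly flags as an open problem in this setting at the end of Section~\ref{sec_perron}. Hence the hypothesis $\limsup_{\eta\to\xi}\overline{H}_g(\eta)\le \liminf_{\eta\to\xi}\underline{H}_g(\eta)$ on $\partial\Omega$ is not available, and the comparison argument collapses. (Nor can you fall back on Sobolev-class uniqueness: your construction does not show $\overline{H}_g-g\in HW^{s,p}_0(\Omega)$.)

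The route indicated in the paper avoids this boundary comparison entirely. Since here $g\in HW^{s,p}(\He)$, the obstacle problem with $h\equiv-\infty$ produces directly a weak solution $h_g\in\mathcal{K}_g(\Omega)$; the key lemma ({\rm\cite[Lemma~5.3]{Pic22}}) shows that this $h_g$ coincides with both Perron solutions. Concretely, one proves $h_g\le u$ for every $u\in\mathcal{U}_g$ (exhausting $\Omega$ by regular subdomains $D_j$ and using property~(ii) of $u$ against the Poisson modification of $u$ on $D_j$, then letting $j\to\infty$), so that $h_g\le\overline{H}_g$; and symmetrically $h_g\ge\underline{H}_g$. Combined with $\underline{H}_g\le\overline{H}_g$ from Step~1, this forces $\overline{H}_g=\underline{H}_g=h_g$ without ever invoking the Comparison Principle on $\partial\Omega$. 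Your Steps~1--2 are in the right spirit and overlap with the Poisson-modification machinery the paper describes, but you should replace Step~3 by this Sobolev-data argument.
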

    The proof of the results above are based on the introduction of the {\it Poisson modification} of a function~$u$ in a set~$D$. This is in clear accordance with the classical local framework. Clearly, most of the results for the fractional obstacle problem stated in Section~\ref{sec_obs} reveal to be necessary for the proper introduction of the related Perron method.
    \begin{defn}[{\bf Poisson modification of a continuous function}. See Definition~5.4 in~\cite{Pic22}]\label{poisson mod}
    	Let us consider a set~$D \Subset \Omega$ such that~$\He \smallsetminus D$ satisfies the measure density condition~\eqref{condizione misura} and let~$u \in C(\overline{D}) \cap L^{p-1}_{sp}(\He)$. We call \textup{Poisson modification} of~$u$ in the domain~$D$ the solution to the obstacle problem in~$D$ with boundary datum~$u$, i.~\!e.~$P_{u,D} \in \mathcal{K}_{u,-\infty}(D,\Omega) \cap C(\overline{D})$.
    \end{defn}
    
    By Corollary~\ref{properties_obst_sol}, we have that~$P_{u,D}$ is the weak solution of the following problem
    \begin{equation*}
    	\begin{cases}
    		\Lc w = f , \ & \mbox{in}\ D,\\
    		w= u,  \ & \mbox{in}\ \He \smallsetminus D.
    	\end{cases}
    \end{equation*}
    Once defined the Poisson modification for any continuous function, by a density argument, Definition~\ref{poisson mod} can be extended to all functions belonging to the upper class~$\mathcal{U}_g$; see~\cite[Proposition~5.5]{Pic22}.
    
   \vspace{2mm}
   For what concerns the {\it resolutivity result} in Theorem~\ref{perron}, it is worth noticing that one can not plainly apply the strategy in Euclidean counterpart studied in~\cite{KKP17}, where the class of~$(s,p)$-superharmonic functions have been introduced. Indeed, due to the presence of the nonlinearity term~$f \equiv f(\cdot,u)$ in the Dirichlet problem~\eqref{pbm}, most of the expected properties of $(s,p)$-superharmonic functions (which hold in the classical Euclidean and homogeneous case) are not immediately translated  to our more general nonhomogeneous Heisenberg setting. Hence, in~\cite{Pic22} it has been extended the strategy seen in~\cite{LL17} where the authors studied a non-homogeneous fractional problem. The dependence of the function~$f$ on the solution~$u$ is a novelty also with respect to~\cite{LL17} where the case when  $f=f(x)$ is considered. Such a difference does change drastically the background of the problem we are dealing with. For this, one has to take into account the assumption in~($F_2$) in order to prove some classical basic technique in Potential Theory as well as to apply the regularity results for weak solutions up to the boundary proven in the related obstacle problem.
Furthermore, one can prove that in the case when problem~\eqref{pbm} admits a weak solution~$h_g$ in~$\Omega$, it does coincide with the generalized Perron solution; see~\cite[Lemma~5.3]{Pic22}.
   
   \vspace{2mm}
   Once established the existence of the generalized Perron solution~$H_g$, it is quite natural to address the boundary regularity problem of where such a solution attains the boundary datum~$g$ on~$\partial \Omega$. The validity of a Wiener-type condition as for the fractional $p$-Laplace equation (see~\cite{KLL23}) is still an {open problem} in the Heisenberg framework.

	\section{Possible further developments}\label{sec_further}
   Despite the increasing interest in nonlocal equations whose underlying geometry is that of stratified Lie groups, there are still several questions which remain open. Below, we list just a few possible developments related to the results presented in the present notes.

    \vspace{2mm}
   	Firstly, it is worth remarking that in all the results stated in the preceding sections, we treat general weak solutions, namely by truncation and handling the resulting error term as a right-hand side in the spirit of De~Giorgi-Nash-Moser. 
   	However, one could deal with the same family of operators by focusing solely on bounded viscosity solutions in the spirit of Krylov-Safonov, as e.~\!g.~\cite{Sil06,KKL19}.   
    Consequently, a natural open problem is whether or not, and under which assumptions on the involved quantities, the viscosity solutions to nonlocal equations in~$\He$ are indeed weak solutions, and vice versa. 
    
    \vspace{2mm}
    Also, in the same spirit of the series of papers by Brasco, Lindgren, and Schikorra~\cite{BL17,BLS18}, one would expect higher differentiability. Similarly, one can investigate possible 
    self-improving properties of the solutions to~\eqref{pbm} as in the recent nonlocal Gehring-type theorems proven in~\cite{KMS15,Sch16} and~\cite{SM22}.
   
    \vspace{2mm}
    Furthermore, one could expect H\"older continuity and other regularity results for the solutions to a strictly related class of problems; that is, by adding in~\eqref{pbm} a  quasilinear operator modeled on the subelliptic $p$-Laplacian.  Apart from the recent paper~\cite{OT23}, the comprehension of mixed operators on stratified Lie groups is still not clearly understood as in the case of its Euclidean counterpart. Thus, regularity theory and related properties of weak solutions to~$\Lc u=0$ when~$\Lc$ is a mixed type operator are almost unknown. Clearly, one can expect that several results in the Euclidean setting, as e.~\!g., in~\cite{BMS23,DFM22,GK22,SZ23,MPP23,BLS23,OT23-2} do still hold for mixed-type operators in the Heisenberg group.
   
   \vspace{2mm}
   In a similar fashion of nonlocal operators, 
   one could expect nonlocal H\"arnack inequalities and other regularity results for the solutions to a strictly related class of problems; that is, by adding in~\eqref{pbm} a second integro-differential operator, $\mathcal{L}_{t,q;\alpha}$ of differentiability exponent~$t>s$ and summability growth~$q>1$, controlled by the zero set of a modulating coefficient~$\alpha\equiv\alpha(\xi,\eta)$; that is, the so-called nonlocal double phase problem, in the same spirit of the Euclidean case treated in~\cite{DFP19,BOS22}. In these direction very fundamental regularity results have been recently obtained both for double phase nonlocal equations in~$\He$ in the relevant paper~\cite{FZZ23}.
   
   \vspace{2mm}
   Moreover, we also remark that local boundedness, H\"older continuity and Harnack inequality have been recently obtained for nonlocal equations in~$\He$ with non-standard growth in~\cite{FZ23}, generalizing in the subRiemannian setting the estimates achieved in~\cite{CKW22,BKS23}.

    \vspace{2mm}
    Lastly,  it is natural to wonder whether or not the De Giorgi-Nash-Moser-type regularity results developed for weak solutions to~\eqref{pbm} can be generalized to the parabolic framework;       i.~\!e.~$\partial_t u(t,\xi) = \Lc u(t,\xi)$ in~$\He \times \er$, where the leading operator~$\Lc$ is defined by~\eqref{main_op}.
    In the fractional Euclidean setting, several remarkable achievements have been recently established in~\cite{Strom19,DZZ21} such as the boundedness estimates together with H\"older continuity for the superlinear case~$p\ge 2$, the latter generalized for any value of the integrability exponent in~\cite{Liao22,APT22}, and Harnack-type inequalities~\cite{Kim19,Strom19-2}.    
    In the fractional subRiemannian setting of the Heisenberg group to our knowledge almost all the natural questions are unsolved. Nevertheless, a first result establishing existence for solution to the related Cauchy-Dirichlet problem can be found in the recent paper~\cite{HS23}.

	\vspace{5mm}

    \end{document}